\title{Polytopal realizations of finite type $\b{g}$-vector fans}
\thanks{
	CH was supported by NSERC Discovery grant {\em Coxeter groups and related structures}.
	SS was a Marie Curie - Cofund Fellow at INdAM and is now supported by the ISF grant 1144/16.
	VP was partially supported by the French ANR grant SC3A~(15\,CE40\,0004\,01).
}
\author{Christophe Hohlweg}
\address[Christophe Hohlweg]{LaCIM, Universit\'e du Qu\'ebec \`A Montr\'eal (UQAM)}
\email{hohlweg.christophe@uqam.ca}
\urladdr{\url{http://hohlweg.math.uqam.ca/}}
\author{Vincent Pilaud}
\address[Vincent Pilaud]{CNRS \& LIX, \'Ecole Polytechnique, Palaiseau}
\email{vincent.pilaud@lix.polytechnique.fr}
\urladdr{\url{http://www.lix.polytechnique.fr/~pilaud/}}
\author{Salvatore Stella}
\address[Salvatore Stella]{University of Haifa}
\email{stella@math.haifa.ac.il}
\urladdr{\url{http://math.haifa.ac.il/~stella/}}
\newtheorem{theorem}{Theorem}
\newtheorem{corollary}[theorem]{Corollary}
\newtheorem{proposition}[theorem]{Proposition}
\newtheorem{lemma}[theorem]{Lemma}
\theoremstyle{definition}
\newtheorem{definition}[theorem]{Definition}
\newtheorem{example}[theorem]{Example}
\newtheorem{remark}[theorem]{Remark}
\newcommand{\R}{\mathbb{R}} 
\newcommand{\Q}{\mathbb{Q}} 
\newcommand{\Z}{\mathbb{Z}} 
\newcommand{\cH}{\mathcal{H}} 
\newcommand{\cX}{\mathcal{X}} 
\renewcommand{\b}[1]{\mathbf{#1}} 
\newcommand{\set}[2]{\left\{ #1 \;\middle|\; #2 \right\}} 
\newcommand{\bigset}[2]{\big\{ #1 \;\big|\; #2 \big\}} 
\newcommand{\ssm}{\smallsetminus} 
\newcommand{\dotprod}[2]{\left\langle \, #1 \, \middle| \, #2 \, \right\rangle} 
\newcommand{\bigdotprod}[2]{\big\langle \, #1 \, \big| \, #2 \, \big\rangle} 
\newcommand{\symdif}{\,\triangle\,} 
\newcommand{\eqdef}{\mbox{\,\raisebox{0.2ex}{\scriptsize\ensuremath{\mathrm:}}\ensuremath{=}\,}} 
\newcommand{\defeq}{\mbox{~\ensuremath{=}\raisebox{0.2ex}{\scriptsize\ensuremath{\mathrm:}} }} 
\DeclareMathOperator{\conv}{conv} 
\newcommandx{\Perm}[2][1=\B_\circ]{\mathsf{Perm}^{#2}(#1)} 
\newcommandx{\Zono}[2][1=\B_\circ]{\mathsf{Zono}^{#2}(#1)} 
\newcommandx{\Asso}[2][1=\B_\circ]{\mathsf{Asso}^{#2}(#1)} 
\newcommandx{\UnivAsso}[2][1=\B_\circ]{\mathsf{Asso}_{\mathrm{un}}^{#2}(#1)} 
\newcommand{\Para}[2][1=\B_\circ]{\mathsf{Para}^{#2}(#1)} 
\DeclareMathOperator{\face}{\mathbf{F}} 
\newcommand{\Fan}{\mathcal{F}} 
\newcommand{\polygon}{\Omega} 
\newcommand{\triangulation}{\mathrm{T}} 
\newcommand{\dissection}{\mathrm{D}} 
\newcommand{\quadrilateral}{\mathrm{Q}} 
\newcommand{\accordion}{\mathrm{A}} 
\newcommand{\zigzag}{\mathrm{Z}} 
\newcommand{\sign}[3]{\varepsilon({#1} \!\in\! {#2}, {#3})} 
\newcommand{\bigsign}[3]{\varepsilon \big( {#1} \!\in\! {#2}, {#3} \big)} 
\newcommand{\SSS}{\reflectbox{$\mathsf{Z}$}} 
\newcommand{\ZZZ}{\mathsf{Z}} 
\newcommand{\gvector}[2]{\mathbf{g}(#1, #2)} 
\newcommand{\biggvector}[2]{\mathbf{g} \big( #1, #2 \big)} 
\newcommand{\gvectors}[2]{\mathbf{g}(#1, #2)} 
\newcommandx{\gvectorFan}[1][1=\B_\circ]{\mathcal{F}^\mathbf{g}(#1)} 
\newcommandx{\UnivgvectorFan}[1][1=\B_\circ]{\mathcal{F}_{\mathrm{un}}^\mathbf{g}(#1)} 
\newcommand{\cvector}[3]{\mathbf{c}(#1, #3 \in #2)} 
\newcommand{\bigcvector}[3]{\mathbf{c} \big( #1, #3 \in #2 \big)} 
\newcommand{\cvectors}[2]{\mathbf{c}(#1, #2)} 
\newcommand{\allcvectors}[1]{\mathbf{C}(#1)} 
\newcommandx{\cvectorFan}[1][1=\B_\circ^\vee]{\mathcal{F}^\mathbf{c}(#1)} 
\newcommandx{\CoxeterFan}[1][1=\B_\circ^\vee]{\mathcal{F}^\mathbf{Cox}(#1)} 
\newcommand{\uvector}[3]{\mathbf{u}(#1, #3 \in #2)} 
\newcommand{\biguvector}[3]{\mathbf{u} \big( #1, #3 \in #2 \big)} 
\newcommand{\rhs}[1]{k(#1)} 
\newcommand{\point}[3]{\mathbf{p}^{#3}(#1, #2)} 
\newcommand{\bigpoint}[3]{\mathbf{p}^{#3} \big( #1, #2 \big)} 
\newcommand{\Univpoint}[3]{\mathbf{p}_{\mathrm{un}}^{#3}(#1, #2)} 
\newcommand{\Univbigpoint}[3]{\mathbf{p}_{\mathrm{un}}^{#3} \big( #1, #2 \big)} 
\newcommand{\HS}[3]{\mathbf{H}_{\le}^{#3}(#1, #2)} 
\newcommand{\bigHS}[3]{\mathbf{H}_{\le}^{#3} \big( #1, #2 \big)} 
\newcommand{\Hyp}[3]{\mathbf{H}_{=}^{#3}(#1, #2 )} 
\newcommand{\bigHyp}[3]{\mathbf{H}_{=}^{#3} \big( #1, #2 \big)} 
\newcommand{\coordSubspace}[1]{\mathbf{H}_{#1}} 
\newcommand{\mi}{-} 
\newcommand{\ma}{+} 
\newcommand{\B}{\mathrm{B}} 
\newcommand{\A}[1]{\mathrm{A}({#1})} 
\newcommand{\D}{\mathrm{D}} 
\newcommand{\Trop}[1]{\mathbb{P}_{#1}} 
\newcommand{\positiveExponents}[1]{\left\{#1\right\}_+} 
\newcommand{\negativeExponents}[1]{\left\{#1\right\}_-} 
\newcommand{\seed}{\Sigma} 
\newcommand{\cluster}{\mathrm{X}} 
\newcommandx{\clusters}{\mathcal{X}} 
\newcommand{\coefficients}{\mathrm{P}} 
\newcommandx{\clusterAlgebra}[2][1=\B_\circ, 2=\coefficients_\circ]{\mathcal{A}\!\left(#1,#2\right)} 
\newcommandx{\clusterComplex}[1][1=\B_\circ]{\Delta(#1)} 
\newcommandx{\principalClusterAlgebra}[1][1=\B_\circ]{\mathcal{A}_{\mathrm{pr}}\!\left(#1\right)} 
\newcommandx{\coefficientFreeClusterAlgebra}[1][1=\B_\circ]{\mathcal{A}_{\mathrm{fr}}\!\left(#1\right)} 
\newcommandx{\universalClusterAlgebra}[1][1=\B_\circ]{\mathcal{A}_{\mathrm{un}}\!\left(#1\right)} 
\newcommandx{\clusterVariables}[1][1=\B_\circ]{\mathcal{X}(#1)} 
\newcommandx{\greenMutationsGraph}[1][1=\B_\circ]{\mathcal{G}(#1)} 
\newcommand{\simpleRoot}{\alpha} 
\newcommand{\fundamentalWeight}{\omega} 
\newcommand{\F}{h} 
\newcommand{\signDep}[3]{\varepsilon(#1,#2,#3)} 
\newcommand{\bigsignDep}[3]{\varepsilon \big( #1,#2,#3 \big)} 
\newcommand{\compatibilityDegree}[2]{(\,#1\,\|\,#2\,)} 
\newcommand{\wo}{w_\circ} 
\newcommand{\fref}[1]{Figure~\ref{#1}} 
\newcommand{\ie}{\textit{i.e.,}~} 
\newcommand{\eg}{\textit{e.g.,}~} 
\newcommand{\apriori}{\textit{a priori}} 
\definecolor{darkblue}{rgb}{0,0,0.7} 
\newcommand{\darkblue}{\color{darkblue}} 
\newcommand{\defn}[1]{\textsl{\darkblue #1}} 
\newcommand{\para}[1]{\medskip\noindent\textbf{#1}} 
\begin{document}

\begin{abstract}
This paper shows the polytopality of any finite type $\b{g}$-vector fan, acyclic or not.
In fact, for any finite Dynkin type~$\Gamma$, we construct a universal associahedron~$\UnivAsso[\Gamma]{}$ with the property that any $\b{g}$-vector fan of type~$\Gamma$ is the normal fan of a suitable projection of~$\UnivAsso[\Gamma]{}$.
\end{abstract}

\maketitle


\section{Introduction}
\label{sec:introduction}

A \defn{generalized associahedron} is a polytope which realizes the cluster complex of a finite type cluster algebra of S.~Fomin and A.~Zelevinsky~\cite{FominZelevinsky-YSystems, FominZelevinsky-ClusterAlgebrasI, FominZelevinsky-ClusterAlgebrasII}.
Generalized associahedra were first constructed by F.~Chapoton, S.~Fomin and A.~Zelevinsky~\cite{ChapotonFominZelevinsky} using the $\b{d}$-vector fans of~\cite{FominZelevinsky-YSystems}.
Further realizations were obtained by C.~Hohlweg, C.~Lange and H.~Thomas~\cite{HohlwegLangeThomas} based on the Cambrian lattices  of N.~Reading~\cite{Reading-CambrianLattices} and Cambrian fans of N.~Reading and D.~Speyer~\cite{ReadingSpeyer}.
These constructions were later revisited by S.~Stella~\cite{Stella} using an approach similar to the original one of~\cite{ChapotonFominZelevinsky}, and by V.~Pilaud and C.~Stump~\cite{PilaudStump-brickPolytope} via brick polytopes.

The realizations of~\cite{HohlwegLangeThomas} start from an acyclic initial exchange matrix~$\B_\circ$, and construct a generalized associahedron~$\Asso{}$ whose normal fan is the $\b{g}$-vector fan of the cluster algebra~$\principalClusterAlgebra$ with principal coefficients at~$\B_\circ$.
They rely on a combinatorial understanding of the $\b{g}$-vector fans as Cambrian fans~\cite{ReadingSpeyer}.
A major obstruction in dropping the acyclicity assumption in this approach is that this combinatorial description is only partially available beyond acyclic cases~\cite{ReadingSpeyer-cyclic}.
Therefore, it remained a challenging open problem, since the appearance of generalized associahedra, to construct polytopal realizations of all finite type $\b{g}$-vector fans including the cyclic cases.
This paper answers this problem.

\begin{theorem}
For any finite type initial exchange matrix~$\B_\circ$, the $\b{g}$-vector fan~$\gvectorFan$ with respect to~$\B_\circ$ is the normal fan of a generalized associahedron~$\Asso{}$.
\end{theorem}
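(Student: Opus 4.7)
The plan is to reduce the theorem to polytopality of a single higher-dimensional object: a universal associahedron $\UnivAsso[\Gamma]{}$ of Dynkin type~$\Gamma$, from which every individual $\Asso{}$ is recovered as a suitable linear projection. This strategy lifts the problem out of any dependence on the choice of initial seed and, crucially, bypasses the need for a Cambrian-style combinatorial model of $\gvectorFan$ in the cyclic case.

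First I would fix an acyclic reference seed $\B_\circ^{\mathrm{ac}}$ in type~$\Gamma$ and use the realization $\Asso[\B_\circ^{\mathrm{ac}}]{}$ of Hohlweg--Lange--Thomas as a starting brick. Since the cluster complex $\clusterComplex$ depends only on~$\Gamma$ and the $\b{g}$-vectors at any initial seed~$\B_\circ$ are obtained from those at~$\B_\circ^{\mathrm{ac}}$ through the tropical (piecewise-linear) mutation formulas of Fomin--Zelevinsky, I would collect all these $\b{g}$-vectors into a single ambient space~$\R^N$ whose coordinates are indexed by the cluster variables of type~$\Gamma$. In this ambient space I would define $\UnivAsso[\Gamma]{}$ either as a Minkowski sum of linearly re-embedded copies of $\Asso[\B_\circ]{}$ for a family of acyclic seeds whose collective $\b{g}$-vectors hit every required ray, or equivalently as the intersection of half-spaces labelled by clusters with right-hand sides $\rhs{\cluster}$ tuned so that every exchange relation holds universally. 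Then, for each initial exchange matrix~$\B_\circ$ (acyclic or cyclic), I would construct a linear projection $\pi_{\B_\circ} \colon \R^N \to \R^n$ whose transpose sends the standard basis of~$\R^n$ to the $\b{g}$-vectors of the initial cluster variables of~$\B_\circ$ expressed in the universal coordinates, and set $\Asso{} \eqdef \pi_{\B_\circ}(\UnivAsso[\Gamma]{})$. The standard behavior of normal fans under projection then expresses the normal fan of $\Asso{}$ as the trace of the normal fan of $\UnivAsso[\Gamma]{}$ on the image of $\transpose{\pi_{\B_\circ}}$, and the theorem reduces to identifying this trace with~$\gvectorFan$.

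The main obstacle is precisely this last verification in the cyclic case, where no Cambrian model is available. I expect the argument to hinge on three ingredients: (i)~the known fact that $\gvectorFan$ is a complete simplicial fan whose rays are the $\b{g}$-vectors of cluster variables and whose maximal cones are indexed by clusters; (ii)~sign-coherence of $\b{c}$-vectors, which supplies a dual basis at every cluster and thereby pins down the facet normals of~$\Asso{}$; and (iii)~an explicit choice of right-hand sides $\rhs{\cluster}$ ensuring that every wall-crossing inequality attached to a mutation holds with equality exactly on the correct pair of adjacent clusters, simultaneously for every initial seed. The delicate combinatorial calculation that these inequalities fit together coherently, without appealing to the Cambrian combinatorics that is only partially available beyond the acyclic case, is where I expect the bulk of the technical work to lie.
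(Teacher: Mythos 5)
There is a genuine gap, and it sits exactly where you place ``the bulk of the technical work'': your step~(iii) --- choosing right-hand sides so that every wall-crossing inequality holds coherently for an arbitrary (possibly cyclic) initial seed --- \emph{is} the theorem, and your projection framework does not produce it. The paper's logic runs in the opposite direction: it first proves polytopality of~$\gvectorFan$ for every initial exchange matrix directly, and only afterwards builds the universal associahedron, whose projection property (Theorem~\ref{thm:universalAssociahedron}) is \emph{deduced} from the already-established vertex description of each~$\Asso{\F}$. Concretely, the two ingredients you are missing are: (a)~Lemma~\ref{lem:linearDependence}, which identifies the unique linear dependence among the $\b{g}$-vectors of two adjacent clusters as $\gvector{\B_\circ}{x} + \gvector{\B_\circ}{x'} = \sum_y \pm b_{xy}\,\gvector{\B_\circ}{y}$ (sum over the entries of one sign only), obtained purely from homogeneity of the exchange relation and sign-coherence --- no Cambrian model needed; and (b)~the observation that the resulting ``exchange submodular'' condition on a height function involves only the exchange matrices of the seeds, hence is independent of the initial seed and of coefficients, so a single function built in the bipartite/acyclic case (via Chapoton--Fomin--Zelevinsky and Stella, Proposition~\ref{prop:existenceExchangeSubmodularFunctions}) works for every~$\B_\circ$ of the given type. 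Feeding these into the polytopality criterion (Proposition~\ref{prop:characterizationPolytopalFan}) finishes the proof in a few lines, with the vertex description coming from the duality of $\b{g}$- and $\b{c}$-vectors (Theorem~\ref{thm:duality}). Without (a) and (b), ``tuning the right-hand sides so that every exchange relation holds universally'' is an unproven assertion, not a construction.

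There are also structural problems with the universal object as you describe it. The paper's~$\UnivAsso{\F}$ is defined as a convex hull of points whose coordinates are $\b{u}$-vectors coming from the cluster algebra with \emph{universal coefficients}; it is neither a Minkowski sum of re-embedded acyclic associahedra nor given by half-spaces indexed by clusters (no facet description of it is known --- it is not even simple or simplicial), and the maps recovering~$\Asso[\B_\star]{\F}$ are orthogonal projections onto \emph{coordinate} subspaces indexed by the cluster variables of a seed~$\seed_\star$, not maps whose transpose hits the initial $\b{g}$-vectors. Even granting your construction, identifying the section of the normal fan of~$\UnivAsso{\F}$ with~$\gvectorFan$ for a cyclic seed would still require knowing that the convex hull of the candidate vertices has normal fan~$\gvectorFan$ --- which is again the direct argument above. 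So the universal associahedron cannot serve as the engine of the proof; it is a consequence of it.
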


When we start from an acyclic initial exchange matrix, our construction precisely recovers the associahedra of~\cite{HohlwegLangeThomas, Stella, PilaudStump-brickPolytope}.
These can all be obtained by deleting inequalities from the facet description of the permutahedron of the corresponding finite reflection group.
The main difficulty to extend the previous approach to arbitrary initial exchange matrices lies in the fact that this property, intriguing as it might be, is essentially a coincidence.
First, the hyperplane arrangement~$\cH$ supporting the $\b{g}$-vector fan is no longer the Coxeter arrangement of a finite reflection group.
Even worse, we prove that the generalized associahedron~$\Asso{}$ usually cannot be obtained by deleting inequalities in the facet description of any zonotope whose normal fan is~$\cH$.
This behavior already appears in type~$D_5$.

To overcome this situation, we develop an alternative approach based on a uniform understanding of the linear dependences among adjacent cones in the $\b{g}$-vector fan.
In fact, not only we cover uniformly all finite type initial exchange matrices, but we actually can treat them simultaneously with a universal object.

\begin{theorem}
\label{thm:universalAssociahedronIntro}
For any given finite Dynkin type~$\Gamma$, there exists a \defn{universal associahedron}~$\UnivAsso[\Gamma]{}$ such that, for any initial exchange matrix~$\B_\circ$ of type~$\Gamma$, the generalized associahedron~$\Asso{}$ is a suitable projection of the universal associahedron~$\UnivAsso[\Gamma]{}$.
In particular, all $\b{g}$-vector fans of type~$\Gamma$ are sections of the normal fan of the universal associahedron~$\UnivAsso[\Gamma]{}$.
\end{theorem}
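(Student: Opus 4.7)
The plan is to lift the construction behind Theorem~1 to a universal ambient space and then recover each specific associahedron by linear projection. I would work in $\R^N$ where $N$ is the number of almost positive roots of type~$\Gamma$, and define the \emph{universal $\b{g}$-vector fan} $\UnivgvectorFan[\Gamma]{}$ on this space: its rays are universal $\b{g}$-vectors, obtained by assigning one coordinate axis to every $\b{c}$-vector that appears in some seed of type~$\Gamma$. The structural property to establish first is that, for every initial exchange matrix~$\B_\circ$ of type~$\Gamma$, there is a canonical linear surjection $\pi_{\B_\circ}\colon \R^N \twoheadrightarrow \R^n$ such that the section of $\UnivgvectorFan[\Gamma]{}$ dual to $\pi_{\B_\circ}$ is exactly $\gvectorFan$.

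Next I would run the proof of Theorem~1 directly in $\R^N$. That proof produces $\Asso{}$ as an intersection of half-spaces whose right-hand sides $\rhs{\cdot}$ are constrained by the linear dependences among $\b{g}$-vectors of adjacent clusters, and those dependences have a universal form governed by the mutation relations, independent of the choice of~$\B_\circ$. Executing the same recipe in the universal ambient space should therefore yield a coherent system of inequalities whose intersection is a polytope $\UnivAsso[\Gamma]{}$ with normal fan $\UnivgvectorFan[\Gamma]{}$.

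The last step invokes the duality between linear projections of polytopes and linear sections of their normal fans: applied to $\pi_{\B_\circ}$, it shows that $\pi_{\B_\circ}(\UnivAsso[\Gamma]{})$ is a polytope whose normal fan is the sectional fan $\gvectorFan$. Provided the universal right-hand sides specialize under $\pi_{\B_\circ}$ to admissible right-hand sides in the sense of Theorem~1, this projection coincides with $\Asso{}$, and the corresponding inclusion of normal fans delivers the ``sections'' statement of the theorem.

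The main obstacle is the second step: choosing a universal height function $\rhs{\cdot}$ that satisfies every adjacency inequality of $\UnivgvectorFan[\Gamma]{}$ simultaneously. In Theorem~1, the admissible heights for a given $\Asso{}$ form a high-dimensional cone, but the universal setting generates many more constraints since every pair of adjacent clusters---not only those realized over a fixed~$\B_\circ$---now contributes one. Proving that the resulting intersection of admissible cones is nonempty, and that its image under each $\pi_{\B_\circ}$ lies inside the admissible cone of~$\Asso{}$, is the technical heart of the argument; it should follow from the fact that every local compatibility is already encoded in a single universal mutation identity, but the bookkeeping that shows this globally is what needs to be carried out.
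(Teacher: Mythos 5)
Your target picture (one big polytope in a space of dimension equal to the number of cluster variables, recovered by linear projections dual to sections of its normal fan) matches the paper, and so does your observation that a single exchange submodular function serves all initial exchange matrices at once. But the step you yourself flag as the ``technical heart'' --- running the proof of Theorem~1 in the universal ambient space to get a facet description with one universal height per ray --- is exactly where the plan breaks. The proof of Theorem~1 rests on Propositions~\ref{prop:characterizationFan} and~\ref{prop:characterizationPolytopalFan}, which apply to a \emph{complete simplicial} fan: one inequality per ray, one linear dependence per wall. In the universal space the candidate cones spanned by the universal $\b{g}$-vectors of the clusters are only $n$-dimensional inside an $N$-dimensional space, so they do not form a complete fan and the lifting criterion is simply unavailable. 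Worse, the object you are trying to cut out by ``one inequality per cluster variable'' does not exist: the universal associahedron is in general neither simple nor simplicial, and its facets are not indexed by cluster variables (in type~$A_4$ it has $42$ vertices but $8960$ facets). So there is no universal height function of the kind you postulate, and the consistency of your system of adjacency constraints cannot be established because the polytope it would describe is not of that combinatorial shape. Your ``canonical surjection $\pi_{\B_\circ}$'' with the section property is likewise asserted rather than constructed.

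The paper avoids all of this by working with vertices instead of facets and by importing one concrete input you are missing: Reading's formula for universal coefficients, $p_x = \prod_{y \in \clusterVariables} p[y]^{[\gvector{\B^T}{y^T};x^T]}$, which packages the $\b{g}$-vector data of \emph{all} seeds into the exponent vectors ($\b{u}$-vectors) of the coefficients of~$\universalClusterAlgebra$. One then defines $\UnivAsso{\F}$ directly as the convex hull of the points $\Univpoint{\B_\circ}{\seed}{\F} = \sum_{x \in \seed} \F(x)\, \uvector{\B_\circ^\vee}{\seed^\vee}{x^\vee}$, and the universal-coefficient formula gives the key geometric fact that for every seed $\seed_\star = (\B_\star,\coefficients_\star,\cluster_\star)$ the dual $\b{c}$-vectors with respect to~$\B_\star$ are the \emph{coordinate} projections of the $\b{u}$-vectors onto $\coordSubspace{\cluster_\star^\vee}$. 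Hence the coordinate projection of the vertex set of $\UnivAsso{\F}$ is precisely the vertex set of $\Asso[\B_\star]{\F}$; since linear projection commutes with convex hulls, $\pi_{\cluster_\star^\vee}(\UnivAsso{\F}) = \Asso[\B_\star]{\F}$, whose polytopality and normal fan $\gvectorFan[\B_\star]$ are already guaranteed by Theorem~1 in the small space --- no universal fan, no universal heights, and no global coherence check are needed. The ``sections of the normal fan'' clause then follows from the projection/section duality you correctly invoke, and the universal $\b{g}$-vector fan is \emph{defined} a posteriori as the normal fan of $\UnivAsso{\F}$ rather than built beforehand. To repair your write-up you would have to replace the fan-lifting step by this vertex-level argument (or supply an equivalent of the universal-coefficient formula), since without it both the existence of the universal polytope and the compatibility of the projections remain unproved.
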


This universal associahedron provides a tool to study simultaneously geometric properties of all generalized associahedra of a given finite Dynkin type.
For example, it is known that the vertex barycenters of all generalized associahedra of~\cite{HohlwegLangeThomas} lie at the origin.
In type~$A$, this property was observed by F.~Chapoton for J.-L.~Loday's realization of the classical associahedron~\cite{Loday}, conjectured for all associahedra of C.~Hohlweg and C.~Lange in~\cite{HohlwegLange}, proved by C.~Hohlweg, J.~Lortie and A.~Raymond~\cite{HohlwegLortieRaymond} and revisited by C.~Lange and V.~Pilaud in~\cite{LangePilaud}.
For arbitrary acyclic finite types, it was conjectured by C.~Hohlweg, C.~Lange and H.~Thomas in~\cite{HohlwegLangeThomas} and proved by V.~Pilaud and C.~Stump using the brick polytope approach~\cite{PilaudStump-barycenter}.
In the present paper, we use the universal associahedron to extend this surprising property to all generalized associahedra~$\Asso{}$. 

\begin{theorem}
The origin is the vertex barycenter of the universal associahedron~$\UnivAsso[\Gamma]{}$, and thus of all generalized associahedra~$\Asso{}$.
\end{theorem}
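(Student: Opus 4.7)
The plan is a two-step reduction. First, prove that the vertex barycenter of the universal associahedron $\UnivAsso[\Gamma]{}$ is the origin. Second, transfer this statement to each generalized associahedron $\Asso{}$ via the projection of Theorem~\ref{thm:universalAssociahedronIntro}.

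For the first step, I would sum the coordinates of all vertices of $\UnivAsso[\Gamma]{}$ using the explicit construction developed in the paper. Each vertex should be expressible as a linear combination of universal $\b{c}$-vectors weighted by the right-hand-sides $\rhs{\cdot}$ of the facet-defining inequalities, indexed by the cluster variables of the corresponding cluster $\cluster$. Exchanging the order of summation turns the total into a linear combination of $\b{c}$-vectors, each weighted by a sum of right-hand-sides over clusters containing the associated cluster variable. The target identity is that this sum vanishes. A natural candidate is a pairing of $(\cluster, x)$ with the data obtained by mutating $\cluster$ at $x$: sign-coherence of the universal $\b{c}$-vectors reverses the relevant sign, and if the right-hand-sides behave compatibly with the pairing, the contributions cancel.

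For the second step, the projection $\pi_{\B_\circ}\colon \UnivAsso[\Gamma]{} \to \Asso{}$ is linear, so it sends the vertex barycenter of $\UnivAsso[\Gamma]{}$ to a linear combination of vertices of $\Asso{}$, each weighted by the number of vertices of $\UnivAsso[\Gamma]{}$ in its fiber. This combination coincides with the unweighted vertex barycenter of $\Asso{}$ provided the fibers all have the same cardinality. This equicardinality property should follow from the uniform way the normal fan of $\UnivAsso[\Gamma]{}$ refines each $\gvectorFan$: every maximal $\b{g}$-cone is subdivided into a constant number of universal cones, reflecting the invariant combinatorial data of the universal construction.

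The main obstacle is the cancellation identity in the first step. In the acyclic case, the analogous vanishing was proven via brick polytopes in~\cite{PilaudStump-barycenter}, but a uniform argument covering both acyclic and cyclic initial exchange matrices requires the involution and the symmetry of $\rhs{\cdot}$ to be intrinsic to the universal cluster combinatorics rather than derived from Coxeter-theoretic data. A promising strategy is to leverage the uniform understanding of linear dependences among adjacent cones in the $\b{g}$-vector fan developed in the paper, which should yield the required involutive symmetry.
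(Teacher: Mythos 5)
There is a genuine gap, and it sits exactly where you flag your ``main obstacle.'' Your step~1 --- a direct cancellation proving that the barycenter of~$\UnivAsso{\F}$ is the origin --- is the entire content of the theorem, and the mechanism you propose does not work. Pairing $(\cluster,x)$ with the seed obtained by mutating at~$x$ and invoking sign-coherence does give $\uvector{\B_\circ^\vee}{\seed^\vee}{x^\vee} = -\uvector{\B_\circ^\vee}{\seed'^\vee}{x'^\vee}$, but in the sum of vertices these two terms are weighted by~$\F(x)$ and~$\F(x')$ respectively, and an exchange submodular function takes different values on the two variables exchanged by a mutation in general (already in type~$A$, $\F_{\rho^\vee}$ depends on how many diagonals cross the exchanged diagonal, which changes under a flip). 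So the paired contributions do not cancel, and no flip-by-flip involution can be expected to: even in the acyclic case the vanishing of the barycenter is the main theorem of~\cite{PilaudStump-barycenter} and its proof is a global argument, not a local pairing. The paper avoids this difficulty altogether by running your step~2 in the \emph{opposite} direction: for each cluster variable~$x$ of~$\universalClusterAlgebra$ it chooses an acyclic seed~$\seed_\star$ containing~$x$, notes (Lemma on projections of barycenters) that the orthogonal projection onto the coordinate subspace of~$\seed_\star$ sends the sum of the vertices of~$\UnivAsso{\F}$ to the sum of the vertices of~$\Asso[\B_\star]{\F}$ --- both sums being indexed by the same set of seeds --- and then imports the acyclic barycenter result of~\cite{PilaudStump-barycenter} to conclude that the $x$-coordinate of the universal barycenter vanishes; the statement for every~$\Asso{\F}$ then follows by projecting back. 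Unless you supply a new proof of the cancellation identity (which would in particular reprove~\cite{PilaudStump-barycenter}), your argument is circular at step~1.

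A smaller point on your step~2: the worry about fibers and their equicardinality is misplaced. The vertices of~$\UnivAsso{\F}$ are exactly the points~$\Univpoint{\B_\circ}{\seed}{\F}$ and the vertices of~$\Asso[\B_\star]{\F}$ are exactly the points~$\point{\B_\star}{\seed}{\F}$, both indexed by the seeds of the finite type, and the projection matches them seed by seed; hence it carries the unweighted vertex barycenter to the unweighted vertex barycenter with no counting argument needed. Also, the picture of each maximal $\b{g}$-cone being ``subdivided into a constant number of universal cones'' is not accurate: $\gvectorFan[\B_\star]$ arises as a \emph{section} of the normal fan of~$\UnivAsso{\F}$ by the coordinate subspace~$\coordSubspace{\cluster_\star}$, not as a coarsening of it within the same ambient space.
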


The paper is organized as follows.
In Section~\ref{sec:prerequisitesCA}, we collect all the definitions and properties of finite type cluster algebras needed in this paper.
In Section~\ref{sec:prerequisitesFans}, we recall convenient criteria to check that a collection of cones forms a polyhedral fan and that a simplicial fan is the normal fan of a polytope.
Based on a precise understanding of the linear dependences in $\b{g}$-vectors of adjacent cones described in Section~\ref{sec:gvectorFan}, we prove the polytopality of all finite type $\b{g}$-vector fans in Section~\ref{sec:polytopality}.
Section~\ref{sec:specificFamilies} is devoted to two special cases: that of acyclic initial exchange matrices for which our construction yields the same generalized associahedra as~\cite{HohlwegLangeThomas, Stella, PilaudStump-brickPolytope}, and that of type~$A$ which presents several remarkable features.
In particular we prove that the facet description of the associahedron~$\Asso{}$ is contained in the facet description of the corresponding zonotope~$\Zono{}$ for any initial exchange matrix~$\B_\circ$ of type~$A$.
Further properties of our generalized associahedra are explored in Section~\ref{sec:properties}, including their connection to green mutations (Section~\ref{subsec:lattice}), the construction of the universal associahedron (Section~\ref{subsec:universalAssociahedron}), its vertex barycenter (Section~\ref{subsec:barycenter}), and a discussion on the relation between~$\Asso{}$ and~$\Zono{}$ (Section~\ref{subsec:zonotope}).


\section{Finite type cluster algebras}
\label{sec:prerequisitesCA}


We begin by recalling some standard notions on cluster algebras simplifying, whenever possible, our notations to deal with the case at hand.
This section can be used as a compendium of the results concerning finite type that are scattered through the literature. We refer to~\cite{FominZelevinsky-ClusterAlgebrasIV} for a general treatment of cluster algebras.

\subsection{Cluster algebras}

We will be working in the ambient field~$\Q(x_1, \dots, x_n, p_1, \dots, p_m)$ of rational expressions in~$n+m$ variables with coefficients in~$\Q$ and we denote by~$\Trop{m}$ its abelian multiplicative subgroup generated by the elements~$\{p_i\}_{i \in [m]}$.
Given~$p = \prod_{i \in [m]} p_i^{a_i} \in \Trop{m}$ we will write
\[
\positiveExponents{p} \eqdef \prod_{i \in [m]} p_i^{\max(a_i,0)}
\qquad\text{and}\qquad
\negativeExponents{p} \eqdef \prod_{i \in [m]} p_i^{-\min(a_i,0)}
\]
so that~$p = \positiveExponents{p} \negativeExponents{p}^{-1}$.

A \defn{seed}~$\seed$ is a triple~$(\B, \coefficients, \cluster)$ consisting of an exchange matrix, a coefficient tuple, and a cluster:
\begin{itemize}
\item the \defn{exchange matrix}~$\B$ is an integer~$n \times n$ skew-symmetrizable matrix, \ie such that there exist a diagonal matrix~$\D$ with~$-\B\D = (\B\D)^T$,
\item the \defn{coefficient tuple}~$\coefficients$ is any subset of~$n$ elements of~$\Trop{m}$,
\item the \defn{cluster}~$\cluster$ is a set of \defn{cluster variables}, $n$~rational functions in the ambient field that are algebraically independent over~$\Q(p_1, \dots, p_m)$.
\end{itemize}
To shorten our notation we think of rows and columns of~$\B$, as well as elements of~$\coefficients$, as being labeled by the elements of~$\cluster$: we write~$\B = (b_{xy})_{x,y \in \cluster}$ and~$\coefficients = \{p_x\}_{x \in \cluster}$.
Moreover we say that a cluster variable~$x$ (resp.~a coefficient~$p$) belongs to~$\seed = (\B, \coefficients, \cluster)$ to mean~$x \in \cluster$ (resp.~$p \in \coefficients$).

Given a seed~$\seed = (\B, \coefficients, \cluster)$ and a cluster variable~$x \in \seed$, we can construct a new seed~$\mu_x(\seed) = \seed' = (\B', \coefficients', \cluster')$ by \defn{mutation} in direction~$x$, where:
\begin{itemize}
\item the new cluster $\cluster'$ is obtained from $\cluster$ by replacing $x$ with the cluster variable $x'$ defined by the following \defn{exchange relation}:
\[
x x' = \positiveExponents{p_x} \prod_{\substack{y \in \cluster \\ b_{xy}  > 0}} y^{b_{xy}} + \negativeExponents{p_x} \prod_{\substack{y \in \cluster \\ b_{xy}  <0}} y^{-b_{xy}}
\]
and leaving the remaining cluster variables unchanged so that $\cluster \ssm \{x\} = \cluster' \ssm \{x'\}$.

\item the row (resp.~column) of~$\B'$ indexed by~$x'$ is the negative of the row (resp.~column) of~$\B$ indexed by~$x$, while all the other entries satisfy:
\[
b'_{yz} = b_{yz} + \frac{1}{2}\big(|b_{yx}| b_{xz} + b_{yx}|b_{xz}|\big),
\]

\item the elements of the new coefficient tuple $\coefficients'$ are 
\[
p'_y =
\begin{cases}
	p_x^{-1}  & \text{if } y = x', \\
	p_y\negativeExponents{p_x}^{b_{xy}}  & \text{if } y \neq x' \text{ and } b_{xy} \leq 0, \\
	p_y\positiveExponents{p_x}^{b_{xy}}  & \text{if } y \neq x' \text{ and } b_{xy} > 0,
\end{cases}
\]
\end{itemize}
A straightforward computation shows that mutations are involutions, \ie $\mu_{x'}(\mu_x(\seed)) = \seed$ so they define an equivalence relation on the collection of all seeds.

Fix a seed $\seed_\circ = (\B_\circ, \coefficients_\circ, \cluster_\circ)$ and call it \defn{initial}.
Up to  an automorphism of the ambient field we will assume that $\cluster_\circ = \{x_1,\dots,x_n\}$ and drop $\cluster_\circ$ from our notation.

\begin{definition}[{\cite[Def.~2.11]{FominZelevinsky-ClusterAlgebrasIV}}]
The (geometric type) \defn{cluster algebra} $\clusterAlgebra$ is the \mbox{$\Z\Trop{m}$-sub}\-ring of the ambient field generated by all the cluster variables in all the seeds mutationally equivalent to the initial seed $\seed_\circ$.
\end{definition}

\begin{example}
The simplest possible choice of coefficient tuple in the initial seed, namely $m=0$ and  $\coefficients_\circ=\{1\}_{i\in[n]}$, gives rise to the \defn{ cluster algebra without coefficients} which we will denote by~$\coefficientFreeClusterAlgebra$.
Note that this algebra, up to an automorphism of the ambient field $\Q(x_1, \dots, x_n)$, depends only on the mutation class of $\B_\circ$ and not on the exchange matrix itself.
The appearance of $\B_\circ$ in the notation $\coefficientFreeClusterAlgebra$ is just to fix the embedding inside the ambient field.
\end{example}

\subsection{Finite type}

We will only be dealing with cluster algebras of \defn{finite type} \ie cluster algebras having only a finite number of cluster variables.
As it turns out, being of finite type is a property that depends only on the exchange matrix in the initial seed and not on the coefficient tuple.

The \defn{Cartan companion} of an exchange matrix $\B$ is the symmetrizable matrix $\A{\B}$ given by:
\[
a_{xy} = 
\begin{cases}
	2 & \text{if } x = y, \\
	-|b_{xy}| & \text{otherwise.}
\end{cases}
\]

\begin{theorem}[{\cite[Thm.~1.4]{FominZelevinsky-ClusterAlgebrasII}}]
The cluster algebra $\clusterAlgebra$ is of finite type if and only if there exists an exchange matrix $\B$ obtained by a sequence of mutations from $\B_\circ$ such that its Cartan companion is a Cartan matrix of finite type.
Moreover the type of $\A{\B}$ is uniquely determined by $B_\circ$: if $\B'$ is any other exchange matrix obtained by mutation from $\B_\circ$ and such that $\A{\B'}$ is a finite type Cartan matrix then $\A{\B'}$ and $\A{\B}$ are related by a simultaneous permutation of rows and columns.
\end{theorem}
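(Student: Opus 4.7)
The plan is to establish the three distinct assertions in turn: sufficiency of the existence of a Cartan-finite representative in the mutation class, necessity of such a representative when $\clusterAlgebra$ is of finite type, and uniqueness of the resulting Dynkin type.

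For the sufficiency direction, I would first observe that being of finite type depends only on the mutation class of $\B_\circ$ (the other data of the seed merely labels coefficients and does not affect whether only finitely many cluster variables arise), so it suffices to treat the case in which $\A{\B_\circ}$ is itself a finite type Cartan matrix. The strategy is then to construct an injection from the set of non-initial cluster variables into the set of positive roots $\Phi_+$ of the root system associated to $\A{\B_\circ}$. Concretely, every cluster variable admits a Laurent expansion in the initial cluster with a unique monomial denominator, and a direct induction on the distance from $\seed_\circ$ in the exchange graph, combined with the mutation formula, shows that the denominator vector lies in $\Phi_+ \cup (-\Pi)$ where $\Pi$ is the simple basis. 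Finiteness of $\Phi_+$ then yields finiteness of $\clusterVariables[\B_\circ]$.

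The necessity direction is where I expect the main obstacle. One would argue that the mutation class of any finite-type exchange matrix contains an acyclic representative, and that an acyclic exchange matrix with finite mutation class must have a Cartan companion of finite type. For the first claim I would define a complexity measure on seeds, for instance the sum of absolute values of exchange matrix entries or, better, a suitable length in the exchange graph, and show by a local analysis of mutations that a minimizing seed cannot contain an oriented cycle: an oriented cycle in the quiver of $\B$ always admits a mutation direction which strictly decreases the complexity, barring the finite-type obstruction. For the second claim, one observes that starting from an acyclic $\B$ and mutating successively at sinks produces the full orbit of $\B$ under the Coxeter element of $\A{\B}$; the finiteness of this orbit then forces the Coxeter element to have finite order, which is equivalent to $\A{\B}$ being of finite type.

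For uniqueness, the total count $|\clusterVariables[\B_\circ]|$ is a bijective invariant of the mutation class of $\B_\circ$ and, by the sufficiency direction applied to any Cartan-finite representative $\B$, it equals $n + |\Phi_+(\A{\B})|$. Together with the rank $n$ and the decomposition into connected components of $\A{\B}$ (which corresponds to the decomposition of $\B$ into mutation-irreducible blocks and is a mutation invariant), this count determines the multiset of irreducible Dynkin types of the components of $\A{\B}$, since two irreducible finite type Cartan matrices of the same rank with the same number of positive roots are isomorphic. The stronger claim that two Cartan-finite representatives $\B$ and $\B'$ in the same mutation class are simultaneously row-column permutations of one another then follows because the labeling by the common cluster $\cluster$ is intrinsic to each seed and the isomorphism of $\A{\B}$ with $\A{\B'}$ must respect this labeling up to relabeling of nodes in each isomorphic component.
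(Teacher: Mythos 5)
This statement is quoted by the paper from \cite[Thm.~1.4]{FominZelevinsky-ClusterAlgebrasII} and is not proved there, so the relevant benchmark is Fomin--Zelevinsky's original argument, which runs through $2$-finiteness of the mutation class and a Cartan--Killing-style classification of $2$-finite matrices, together with the hard ``Cartan finite $\Rightarrow$ finitely many cluster variables'' direction resting on the periodicity results of the $Y$-systems paper. Your sketch compresses exactly these hard steps into assertions. For sufficiency, the claim that every non-initial cluster variable has denominator vector in $\Phi_+$ and that this assignment is injective is not a ``direct induction on the distance from $\seed_\circ$'': it is essentially the main theorem of \cite{FominZelevinsky-ClusterAlgebrasII}, and no straightforward induction on mutation distance keeps denominator vectors inside $\Phi_{\geq -1}$, nor gives injectivity (which you do need to conclude finiteness). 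For necessity, your complexity-decreasing argument has no mechanism behind it: a single mutation need not decrease any of the proposed measures (the Markov quiver is fixed, up to relabeling, by every mutation), and hedging with ``barring the finite-type obstruction'' makes the step circular. Moreover, ``acyclic with finite mutation class $\Rightarrow$ Cartan companion of finite type'' is false as stated --- acyclic affine quivers are mutation-finite but of infinite cluster type --- and the orbit of an acyclic $\B$ under successive sink mutations is \emph{always} finite (a full admissible round of sink mutations returns $\B$), so its finiteness cannot force the Coxeter element to have finite order. The correct version of this idea must track the growth of cluster variables (denominator vectors) along the bipartite belt, not of exchange matrices, and establishing that link is again the substance of the Fomin--Zelevinsky proof.

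The uniqueness argument also fails at precisely the point where the statement has content: for $n \geq 3$, types $B_n$ and $C_n$ have the same rank and the same number of positive roots ($n^2$), hence the same number of cluster variables per irreducible component, yet their Cartan matrices are not related by a simultaneous permutation of rows and columns. So ``same rank and same $|\Phi_+|$ implies isomorphic'' is false, and your numerical invariants cannot rule out a mutation path connecting a type $B_n$ seed to a type $C_n$ seed; the closing appeal to the ``intrinsic labeling by the common cluster'' does not supply the missing invariant either (one would need something like the behaviour of the skew-symmetrizer or the finer analysis carried out in \cite{FominZelevinsky-ClusterAlgebrasII}). As it stands, each of the three parts of the proposal has a genuine gap.
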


In accordance with the above statement, when talking about the \defn{(cluster) type} of $\clusterAlgebra$ or $\B_\circ$ we will refer to the Cartan type of $\A{\B}$.
We reiterate that the Cartan type of~$\A{\B_\circ}$ need not be finite: being of finite type is a property of the mutation class.

For a finite type cluster algebra~$\clusterAlgebra$, we will consider the root system of~$\A{\B_\circ}$.
To avoid any confusion later on let us state clearly the conventions we use in this paper: for us simple roots~$\{\simpleRoot_x\}_{x \in \cluster_\circ}$ and fundamental weights~$\{\fundamentalWeight_x\}_{x \in \cluster_\circ}$ are two basis of the same vector space~$V$; the matrix relating them is the Cartan matrix $\A{\B_\circ}$.
Fundamental weights are the dual basis to simple coroots~$\{\simpleRoot^\vee_x\}_{x \in \cluster_\circ}$, while simple roots are the dual basis to fundamental coweights~$\{\fundamentalWeight^\vee_x\}_{x \in \cluster_\circ}$; coroots and coweights are two basis of the dual space~$V^\vee$ and they are related by the transpose of the Cartan matrix.
This set of conventions is the standard one in Lie theory but it is not the one generally used in the setting of Coxeter groups~\cite[Chap.~4]{BjornerBrenti}.

A finite type exchange matrix~$\B_\circ$ is said to be \defn{acyclic} if~$\A{\B_\circ}$ is itself a Cartan matrix of finite type and \defn{cyclic} otherwise.
An acyclic finite type exchange matrix is said to be \defn{bipartite} if each of its rows (or equivalently columns) consists either of non-positive or non-negative entries.

\subsection{Principal coefficients, $\b{g}$- and $\b{c}$-vectors}

Among all the cluster algebras having a fixed initial exchange matrix, a central role is played by those with principal coefficients.
Indeed, thanks to the results in \cite{FominZelevinsky-ClusterAlgebrasIV}, they encode enough informations to understand all the other possible choices of coefficients.

\begin{definition}[{\cite[Def.~3.1]{FominZelevinsky-ClusterAlgebrasIV}}]
A cluster algebra is said to have \defn{principal coefficients} (at the initial seed) if its ambient field is $\Q(x_1, \dots, x_n, p_1, \dots, p_n)$ and the initial coefficient tuple consists of the generators of $\Trop{n}$ \ie $\coefficients_\circ = \{p_i\}_{i \in [n]}$.
In this case we will write $\principalClusterAlgebra$ for $\clusterAlgebra[B_\circ][\{p_i\}_{i \in [n]}]$, and we reindex the generators~$\{p_i\}_{i \in [n]}$ of $\Trop{n}$ by~$\{p_x\}_{x \in \cluster_\circ}$.
\end{definition}

In the above definition, it is important to specify that principal coefficients are with respect to a specific exchange matrix, even though it is usually omitted.
In other words $\principalClusterAlgebra$ and $\principalClusterAlgebra[\B_\circ']$ are in general different cluster algebras even when $\B_\circ$ and $\B_\circ'$ are related by mutations.

A notable property of cluster algebras with principal coefficients is that they are $\Z^n$-graded (in the basis~$\{\omega_x\}_{x \in \cluster_\circ}$ of~$V$).
The degree function~$\deg(\B_\circ,\cdot)$ on $\principalClusterAlgebra$ is obtained by setting
\[
\deg(\B_\circ,  x) \eqdef \fundamentalWeight_x
\qquad\text{and}\qquad
\deg(\B_\circ, p_x) \eqdef \sum_{y \in \cluster_\circ} -b_{yx} \fundamentalWeight_y
\]
for any $x\in\cluster_\circ$.
This assignment makes all exchange relations and all cluster variables in~$\principalClusterAlgebra$ homogeneous~\cite{FominZelevinsky-ClusterAlgebrasIV} and it justifies the definition of the following family of integer vectors associated to cluster variables.

\begin{definition}[\cite{FominZelevinsky-ClusterAlgebrasIV}]
The \defn{$\b{g}$-vector} of a cluster variable~$x \in \principalClusterAlgebra$ is its degree
\[
\gvector{\B_\circ}{x} \eqdef \deg(\B_\circ,x) \; \in V.
\]
We denote by $\gvectors{\B_\circ}{\seed} \eqdef \set{\gvector{\B_\circ}{x}}{x \in \seed}$ the set of $\b{g}$-vectors of the cluster variable in the seed~$\seed$ of~$\principalClusterAlgebra$.
\end{definition}

The next definition gives another family of integer vectors, introduced implicitly in \cite{FominZelevinsky-ClusterAlgebrasIV}, that are relevant in the structure of~$\principalClusterAlgebra$.

\begin{definition}
Given a seed~$\seed$ in~$\principalClusterAlgebra$, the \defn{$\b{c}$-vector} of a cluster variable $x \in \seed$ is the vector
\[
\cvector{\B_\circ}{\seed}{x} \eqdef \sum_{y \in \cluster_\circ} c_{yx} \, \simpleRoot_y \; \in V
\]
of exponents of~$p_x = \prod_{y \in \cluster_\circ} (p_y)^{c_{yx}}$.
Let~$\cvectors{\B_\circ}{\seed} \eqdef \set{\cvector{\B_\circ}{\seed}{x}}{x \in \seed}$ denote the set of $\b{c}$-vectors of a seed~$\seed$.
Finally, let~$\allcvectors{\B_\circ} \eqdef \bigcup_{\seed}{\cvectors{\B_\circ}{\seed}}$ denote the set of all~$\b{c}$-vectors in~$\principalClusterAlgebra$.
\end{definition}

It is worth spending few words here to emphasize the fact that, contrary to what happens for $\b{g}$-vectors, $\b{c}$-vectors are not attached to cluster variables {\it per se} but depends on the seed in which the given cluster variable sits.

An important features of $\b{c}$-vectors is that their entries weekly agree in sign.
This is one of the various reformulation of the \defn{sign-coherence} conjecture of~\cite{FominZelevinsky-ClusterAlgebrasIV} recently established in full generality by~\cite{GrossHackingKeelKontsevich}.
In the setting of finite type cluster algebras, this result can also be deduced in several ways from earlier works: one proof is to combine \cite{DerksenWeymanZelevinsky} with~\cite{Demonet}; another one is to use surfaces~\cite{FominShapiroThurston, FominThurston} and orbifolds~\cite{FeliksonShapiroTumarkin} to study types~$A_n, B_n, C_n$ and~$D_n$, and to deal with exceptional types by direct inspection.
Here we prefer to observe it as a corollary of the following theorem that will be useful later on to justify our notation.

\begin{theorem}[{\cite[Thm.~1.3]{NakanishiStella}}]
\label{thm:roots}
The $\b{c}$-vectors of the finite type cluster algebra $\principalClusterAlgebra$ are roots in the root system whose Cartan matrix is $\A{\B_\circ}$.
\end{theorem}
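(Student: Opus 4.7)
I would prove this by induction on the distance from the initial seed~$\seed_\circ$ in the mutation graph of~$\principalClusterAlgebra$. For the base case, in~$\seed_\circ$ the coefficient attached to~$x \in \cluster_\circ$ is the generator~$p_x$ of~$\Trop{n}$, so $\cvector{\B_\circ}{\seed_\circ}{x} = \simpleRoot_x$ is a simple root of the root system of~$\A{\B_\circ}$, and simple roots are roots.

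For the inductive step, I would first derive the tropical mutation rule for $\b{c}$-vectors from the mutation rule for coefficients~$p_x$: if~$\seed' = \mu_x(\seed)$, then
\[
\cvector{\B_\circ}{\seed'}{z} =
\begin{cases}
-\cvector{\B_\circ}{\seed}{x} & \text{if } z = x', \\
\cvector{\B_\circ}{\seed}{y} + \max(0, \signature \, b_{yx}) \, \cvector{\B_\circ}{\seed}{x} & \text{if } z = y \neq x',
\end{cases}
\]
where~$\signature \in \{+1,-1\}$ is the common sign of the entries of~$\cvector{\B_\circ}{\seed}{x}$ in the basis of simple roots. That such a common sign exists is the sign-coherence statement, which in finite type follows from~\cite{DerksenWeymanZelevinsky} combined with~\cite{Demonet}, and in full generality from~\cite{GrossHackingKeelKontsevich}. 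The key remaining point is to interpret this tropical transformation as the orthogonal reflection through the root~$\cvector{\B_\circ}{\seed}{x}$. I would prove by a parallel induction that for every seed~$\seed$ the $\b{c}$-vectors $\cvectors{\B_\circ}{\seed}$ form a basis of the root lattice consisting of roots, and that their pairwise Cartan pairings are precisely governed by the entries of the exchange matrix of~$\seed$ twisted by the signature~$\signature$. This compatibility rewrites the update formula above as the reflection of $\cvector{\B_\circ}{\seed}{y}$ through~$\cvector{\B_\circ}{\seed}{x}$ (for~$y \neq x'$), and hence sends roots to roots.

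The main obstacle is precisely the compatibility between the exchange matrix of an arbitrary seed and the Cartan pairings among its $\b{c}$-vectors: it ties together the combinatorial mutation dynamics with the linear-algebraic structure of the ambient root system. This is where the finite type assumption truly enters, ensuring that an ambient root system of type~$\A{\B_\circ}$ exists in the first place, and where sign-coherence is needed to pin down signs in the reflection formula. Once this is established, the induction runs smoothly: each $\b{c}$-vector of~$\seed'$ is either minus a $\b{c}$-vector of~$\seed$ or the image of one under an orthogonal reflection through another $\b{c}$-vector of~$\seed$, so the inductive hypothesis that $\b{c}$-vectors are roots propagates, concluding the proof.
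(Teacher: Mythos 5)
First, a point of comparison: the paper does not prove this statement at all --- it is imported verbatim from \cite[Thm.~1.3]{NakanishiStella}, whose proof is an explicit, case-by-case/diagrammatic description of the $\b{c}$-vectors in finite type, not the reflection induction you sketch. So your proposal should be judged as an independent proof attempt, and as such it has a genuine gap at its central step.

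The gap is the claim that, once sign-coherence is granted, the tropical rule $\cvector{\B_\circ}{\seed}{y} \mapsto \cvector{\B_\circ}{\seed}{y} + \max(0,\signature b_{yx})\,\cvector{\B_\circ}{\seed}{x}$ ``rewrites as the orthogonal reflection'' through $\cvector{\B_\circ}{\seed}{x}$. Taken literally this is already false at the initial seed: mutating $\seed_\circ$ in direction $x$ leaves $\cvector{\B_\circ}{\seed_\circ}{y}=\simpleRoot_y$ untouched for every $y$ with $\signature b_{yx}\le 0$, whereas the reflection would produce $\simpleRoot_y+|b_{yx}|\simpleRoot_x$. That discrepancy is harmless for the conclusion (an unchanged root is still a root), but it exposes where the real difficulty sits: in the case $\signature b_{yx}>0$ the new vector is $\cvector{\B_\circ}{\seed}{y}+|b_{yx}|\,\cvector{\B_\circ}{\seed}{x}$, and the reflection argument makes it a root only if one knows the sign-refined compatibility $\dotprod{\cvector{\B_\circ}{\seed}{y}}{\cvector{\B_\circ}{\seed}{x}^\vee}=-|b_{yx}|$ (not merely $\pm|b_{yx}|$). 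This is an ``admissible quasi-Cartan companion'' statement relating the exchange matrix of an \emph{arbitrary} seed to the Cartan pairings of its $\b{c}$-vectors, and it is essentially the theorem itself. Your proposal defers it to a ``parallel induction'' without exhibiting an inductive invariant that closes: to verify the condition at the mutated seed one must control the signs of \emph{all} pairings of the new $\b{c}$-vectors against the mutated exchange matrix, including pairs for which the hypothesis (stated only when $\signature b_{yx}>0$) says nothing, and the pairings do not transform by the exchange-matrix mutation rule in any obvious sign-coherent way. Tellingly, the only role you assign to the finite type hypothesis --- ``ensuring that an ambient root system exists'' --- is vacuous, since the Kac--Moody root system of $\A{\B_\circ}$ exists for every symmetrizable companion; as written your argument would therefore apply to arbitrary skew-symmetrizable $\B_\circ$, a generality in which the statement is not available and which is precisely why \cite{NakanishiStella} work in finite type with a very different, explicit method (and why related sign results of this kind, e.g.\ Seven's work on companion bases, require substantial extra input). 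Until the pairing compatibility and its propagation under mutation are actually proved, the inductive step does not go through.
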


Again note that, since~$\A{\B_\circ}$ may be not of finite type, the root system in this statement is, in general, not finite.
For example, for the cyclic type~$A_3$ exchange matrix, the Cartan companion~$\A{\B_\circ}$ is of affine type~$A_2^{(1)}$, see its Coxeter arrangement in \fref{fig:exmFans}\,(top right).
More precisely, it is finite if and only if~$\B_\circ$ is acyclic.
We will discuss in more details the relation of $\b{c}$-vectors with root systems in Remark~\ref{rem:shi}.

Another consequence of \cite[Thm.~1.3]{NakanishiStella} is that in a cluster algebra of finite type, there are $nh$ distinct $\b{c}$-vectors where~$h$ is the Coxeter number of the given finite type.
We remind to our reader that the same algebra has $(h+2)n/2$ distincts $\b{g}$-vectors, one for each cluster variable.

\medskip
Our next task in this section is to discuss a duality relation in between $\b{c}$-vectors and $\b{g}$-vectors.
A first step is to recall the notion of the \defn{cluster complex} of $\clusterAlgebra$: it is the abstract simplicial complex whose vertices are the cluster variables of $\clusterAlgebra$ and whose facets are its clusters.
As it turns out, at least in the finite type cases, this complex is independent of the choice of coefficients, see~\cite[Thm.~1.13]{FominZelevinsky-ClusterAlgebrasII} and \cite[Conj.~4.3]{FominZelevinsky-ClusterAlgebrasIV}.
In particular this means that, up to isomorphism, there is only one cluster complex for each finite type: the one associated to $\coefficientFreeClusterAlgebra$.
We will use this remark later on to relate cluster variables of different cluster algebras of the same finite type.
Note also that, again when $\clusterAlgebra$ is of finite type, the cluster complex is a \defn{pseudomanifold}~\cite{FominZelevinsky-ClusterAlgebrasII}.

For a skew-symmetrizable exchange matrix~$\B_\circ$, the matrix~$\B_\circ^\vee \eqdef -\B_\circ^T$ is still skew-symmetrizable.
The cluster algebras~$\principalClusterAlgebra$ and~$\principalClusterAlgebra[\B_\circ^\vee]$ can be thought as \defn{dual} to each other.
Indeed their types are Langlands dual of each other.
Moreover their cluster complexes are isomorphic: by performing the same sequence of mutations we can identify any cluster variable~$x$ of~$\principalClusterAlgebra$ with a cluster variable~$x^\vee$ of~$\principalClusterAlgebra[\B_\circ^\vee]$, and any seed $\seed$ in $\principalClusterAlgebra$ with a seed $\seed^\vee$ in~$\principalClusterAlgebra[\B_\circ^\vee]$.
More importantly the following crucial property holds.

\begin{theorem}[{\cite[Thm.~1.2]{NakanishiZelevinsky}}]
\label{thm:duality}
For any seed~$\seed$ of~$\principalClusterAlgebra$, let $\seed^\vee$ be its dual in $\principalClusterAlgebra[\B_\circ^\vee]$.
Then the $\b{g}$-vectors~$\gvector{\B_\circ}{\seed}$ of the cluster variables in~$\seed$ and the $\b{c}$-vectors~$\cvectors{\B_\circ^\vee}{\seed^\vee}$ of the cluster variables in~$\seed^\vee$ are dual bases, \ie
\[
\bigdotprod{\gvector{\B_\circ}{x}}{\cvector{\B_\circ^\vee}{\seed^\vee}{y^\vee}} = \delta_{x=y}
\]
for any two cluster variables~$x,y \in \seed$.
\end{theorem}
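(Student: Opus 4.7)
The plan is to recast the statement as a matrix identity and prove it by induction on the distance from the initial seed in the exchange graph. Assemble the $\b{g}$-vectors of $\seed$ into an $n \times n$ matrix $G_\seed$, expressing each $\gvector{\B_\circ}{x}$ in the basis $\{\fundamentalWeight_y\}_{y \in \cluster_\circ}$ of fundamental weights, and the $\b{c}$-vectors of $\seed^\vee$ into a matrix $C_{\seed^\vee}$, expressing each $\cvector{\B_\circ^\vee}{\seed^\vee}{x^\vee}$ in the basis of simple roots of $\A{\B_\circ^\vee}$. Under the identification of that basis with the basis $\{\simpleRoot_y^\vee\}_{y \in \cluster_\circ}$ of simple coroots of $\A{\B_\circ}$, and using $\dotprod{\fundamentalWeight_x}{\simpleRoot_y^\vee} = \delta_{xy}$ (the defining duality between weights and coroots fixed in the excerpt), the claim becomes the matrix identity $G_\seed^T \, C_{\seed^\vee} = I$.

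For the base case $\seed = \seed_\circ$, the identity $G_{\seed_\circ} = I$ is immediate from the definition of the principal $\Z^n$-grading, which sets $\gvector{\B_\circ}{x_i} = \fundamentalWeight_{x_i}$. Similarly $C_{\seed_\circ^\vee} = I$ since the initial $\b{c}$-vectors in $\principalClusterAlgebra[\B_\circ^\vee]$ are the exponent vectors of the initial coefficient generators $p_i^\vee$, and these form the standard basis by construction of principal coefficients.

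For the inductive step, fix a direction $z$ and compare both sides before and after mutating at $z$ on the primal side and at $z^\vee$ on the dual side. The coefficient mutation formula in $\principalClusterAlgebra[\B_\circ^\vee]$ directly yields a rule for $C_{\mu_{z^\vee}(\seed^\vee)}$ in terms of $C_{\seed^\vee}$: the column indexed by $z^\vee$ is negated, and signed multiples of it are added to the remaining columns, with signs controlled by the entries of the dual exchange matrix $\B^\vee_\seed$. A parallel derivation, based on the homogeneity of the exchange relation with respect to the principal grading, yields a mutation rule for $G_\seed$ in which the column indexed by $z$ is replaced by a combination depending on the signs of the entries in the $z$-th column of $C_\seed$ together with the entries of $\B_\seed$.

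The main technical obstacle is matching these two mutation rules so that the identity is preserved. Three ingredients come into play: (i)~the sign-coherence of $\b{c}$-vectors, recalled in the excerpt, which ensures that each column of $C_\seed$ has entries of a single sign, thereby pinning down the tropical sign appearing in the $\b{g}$-vector mutation; (ii)~Langlands duality $\B^\vee_\seed = -\B_\seed^T$, which converts the row-signs of $\B_\seed$ that govern the primal $\b{g}$-vector mutation into the column-signs of $\B^\vee_\seed$ that govern the dual $\b{c}$-vector mutation, so that the two rules refer to the same combinatorial data; (iii)~a direct matrix computation that verifies $G_{\mu_z(\seed)}^T \, C_{\mu_{z^\vee}(\seed^\vee)} = I$. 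The concluding check is routine sign bookkeeping once these mutation formulas are on the table, but assembling them in a sufficiently symmetric form and invoking sign-coherence at the right moment is where essentially all the work concentrates.
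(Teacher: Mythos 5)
The paper offers no proof of this statement to compare against: it is quoted verbatim from Nakanishi--Zelevinsky, and your outline is essentially their argument (induction along the exchange graph, matrix recurrences for the $G$- and $C$-matrices, sign-coherence). Your base case and the overall matrix reformulation $G_\seed^T C_{\seed^\vee} = I$ are correct. However, there is a genuine gap exactly at the point you defer to ``routine sign bookkeeping''. The clean mutation rule for $G_\seed$ in direction $z$ (the one with no correction term) is governed by the entries of $\B_\seed$ \emph{and} by the tropical sign $\epsilon$ of the $z$-th column of the primal $C$-matrix $C_\seed$, i.e.\ the coefficients of $\principalClusterAlgebra$ at $\seed$ --- this is visible already in Lemma~\ref{lem:linearDependence}, where the choice between the two dependences is decided by which of $\positiveExponents{p_x}, \negativeExponents{p_x}$ is trivial. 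Dually, the coefficient mutation in $\principalClusterAlgebra[\B_\circ^\vee]$ is \emph{not} controlled by the entries of $\B^\vee_\seed$ alone, as you assert: the multiple of the $z^\vee$-column added to the $y^\vee$-column is $\epsilon^\vee\max(\epsilon^\vee b^\vee_{zy},0)$, where $\epsilon^\vee$ is the tropical sign of the $z^\vee$-column of the \emph{dual} $C$-matrix $C_{\seed^\vee}$. Writing $G_{\mu_z(\seed)} = G_\seed E$ and $C_{\mu_{z^\vee}(\seed^\vee)} = C_{\seed^\vee} F$, the inductive step requires $E^T F = I$, and carrying out the multiplication (using $b^\vee_{zy} = -b_{yz}$) shows this holds if and only if $\epsilon = \epsilon^\vee$ whenever some $b_{yz}\neq 0$. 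Your ingredient (ii) only matches the exchange-matrix entries; it says nothing about these two signs, which are attached to columns of two different $C$-matrices living in two different cluster algebras, and neither sign-coherence nor $\B^\vee_\seed = -\B_\seed^T$ gives their equality for free. This ``duality of tropical signs'' is precisely the companion statement that Nakanishi--Zelevinsky establish alongside the theorem.

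The gap is fixable, but it needs an explicit extra ingredient. One route is to prove the sign coincidence simultaneously within your induction. A quicker route: since $\B_\circ\D$ is skew-symmetric for a positive diagonal matrix $\D$, one has $\B_\circ^\vee = -\B_\circ^T = \D^{-1}\B_\circ\D$, and all the mutation recurrences involved are equivariant under simultaneous conjugation by $\D$; hence $C_{\seed^\vee} = \D^{-1} C_\seed \D$, which has the same sign pattern as $C_\seed$, so the two tropical signs agree. (In the skew-symmetric types the issue is vacuous, since then $\B_\circ^\vee = \B_\circ$ and the two $C$-matrices coincide.) With this inserted before your step (iii), the argument closes and reproduces the cited proof.
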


In view of the above results, and since $\A{\B_\circ^\vee}=\A{\B_\circ}^T$, the $\b{c}$-vectors of a finite type cluster algebra $\principalClusterAlgebra[\B_\circ^\vee]$ can be understood as coroots for $\A{\B_\circ}$ so that the $\b{g}$-vectors of $\principalClusterAlgebra$ become weights.
This justify our choice to define $\b{g}$-vectors in the weight basis.

\subsection{Coefficient specialization and universal cluster algebra}

We now want to relate, within a given finite type, cluster algebras with different choices of coefficients.
Pick a finite type exchange matrix $\B_\circ$ and let $\clusterAlgebra\subset\Q(x_1,\dots,x_n,p_1,\dots,p_m)$ and $\clusterAlgebra[\B_\circ][\overline{\coefficients_\circ}]\subset\Q(\overline{x_1},\dots,\overline{x_n},\overline{p_1},\dots,\overline{p_\ell})$ be any two cluster algebras having~$\B_\circ$ as exchange matrix in their initial seed.
As we said, cluster variables and seeds in these two algebras are in bijection because their cluster complexes are isomorphic.
Let us write
\[
x \longleftrightarrow \overline{x}
\quad\quad \text{and}\quad\quad
\seed \longleftrightarrow \overline{\seed}
\]
for this bijection.
We will say that~$\clusterAlgebra[\B_\circ][\overline{\coefficients_\circ}]$ is obtained from~$\clusterAlgebra$ by a \defn{coefficient specialization} if there exist a map of abelian groups~$\eta:\Trop{m}\rightarrow\Trop{\ell}$ such that, for any~$p_x$ in some seed~$\seed$ of~$\clusterAlgebra$ 
\[
\eta(\positiveExponents{p_x}) = \positiveExponents{\overline{p_{\overline{x}}}}
\quad\quad \text{and} \quad\quad
\eta(\negativeExponents{p_x}) = \negativeExponents{\overline{p_{\overline{x}}}}
\]
and which extends in a unique way to a map of algebras that satisfy
\[
\eta(x) = \overline{x}.
\]
Note that this is not the most general definition (see~\cite[Def.~12.1 and Prop.~12.2]{FominZelevinsky-ClusterAlgebrasIV}) but it will suffice here.
Armed with the notion of coefficient specialization we can now introduce the last kind of cluster algebra of finite type we will need.

\begin{definition}[{\cite[Def.~12.3 and Thm.~12.4]{FominZelevinsky-ClusterAlgebrasIV}}]
Pick a finite type exchange matrix $\B_\circ$.
The \defn{cluster algebra with universal coefficients} $\universalClusterAlgebra$ is the unique (up to canonical isomorphism) cluster algebra such that any other cluster algebra of the same type as $\B_\circ$ can be obtained from it by a unique coefficient specialization.
\end{definition}

Let us insist on the fact that, in view of the universal property it satisfies, $\universalClusterAlgebra$ depends only on the type of~$\B_\circ$ and not on the exchange matrix~$\B_\circ$ itself.
We again keep $\B_\circ$ in the notation only to fix an embedding into the ambient field.

Rather than proving the existence and explaining the details of how such a universal algebra is built, we will recall here one of its remarkable properties that follows directly from the $\b{g}$-vector recursion~\cite[Prop.~4.2\,(v)]{NakanishiZelevinsky} and that we will need in our proofs later on.
Denote by $\clusterVariables$ the set of all cluster variables in $\universalClusterAlgebra$ and let $\{p[x]\}_{x\in\clusterVariables}$ be the generators\footnote{A note to the reader that might be scared of a circular reasoning here: the set of generators $\{p[x]\}_{x\in\clusterVariables}$ is just a collection of symbols and its cardinality can be precomputed: it depends only on the type of the algebra and not on its coefficients.} of $\Trop{|\clusterVariables|}$.

\begin{theorem}[{\cite[Theo.~10.12]{Reading-UniversalCoefficients}}]
The cluster algebra $\universalClusterAlgebra$ can be realized over $\Trop{|\clusterVariables|}$.
The coefficient tuple $\coefficients=\{p_x\}_{x \in \cluster}$ at each seed $\seed=(\B,\coefficients,\cluster)$ of $\universalClusterAlgebra$ is given by the formula
\[
p_x = \prod_{y \in \clusterVariables} \big(p[y]\big)^{[\gvector{\B^T}{y^T};{x^T}]}
\]
where we denote by $[\b{v};x]$ the $x$-th coefficient of a vector $\b{v}$ in the weight basis~$(\omega_x)_{x \in \cluster}$.
The bijection of the elements of $\clusterVariables$ with the cluster variables of $\principalClusterAlgebra[\B^T]$, appearing in the formula, is given by an isomorphism of the corresponding cluster complexes similar to the one discussed above.
\end{theorem}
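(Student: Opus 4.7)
The plan is to verify directly that the proposed formula defines a cluster algebra realized over~$\Trop{|\clusterVariables|}$, and then to check its universal property by exhibiting a unique coefficient specialization to any other cluster algebra of the same type. Two main ingredients enter: the $\b{g}$-vector recursion for principal coefficients from Nakanishi--Zelevinsky \cite[Prop.~4.2]{NakanishiZelevinsky}, and the sign-coherence of $\b{c}$-vectors (equivalently, of $\b{g}$-vectors).

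To set up the algebra, assign to the initial seed~$\seed_\circ$ the coefficient tuple given by the formula. Since the $\b{g}$-vectors in $\principalClusterAlgebra[\B_\circ^T]$ attached to initial cluster variables are the fundamental weights, this reads
\[
p_x = p[x] \cdot \!\!\prod_{y \in \clusterVariables \ssm \cluster_\circ} \!\! (p[y])^{[\gvector{\B_\circ^T}{y^T}; x]}
\qquad \text{for each } x \in \cluster_\circ,
\]
which is a well-defined element of~$\Trop{|\clusterVariables|}$.

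The heart of the argument is to show that the formula is preserved under mutation: if $\seed' = \mu_x(\seed)$, applying the cluster-algebraic coefficient mutation rule to~$\coefficients$ at~$\seed$ must reproduce the tuple~$\coefficients'$ obtained by applying the formula directly at~$\seed'$. By induction on mutation distance from~$\seed_\circ$ it suffices to verify this for a single mutation. Nakanishi--Zelevinsky's $\b{g}$-vector recursion provides an explicit piecewise-linear rule relating $\gvector{\B^T}{y^T}$ and $\gvector{(\B')^T}{y^T}$, whose two cases are distinguished by signs. Sign-coherence then ensures that the generators $p[y]$ with positive contribution produce exactly~$\positiveExponents{p_x}$ while those with negative contribution produce~$\negativeExponents{p_x}$; matching the two sides factor by factor then reduces to the coefficient mutation rule. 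This compatibility is the main technical obstacle.

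For the universal property, we must check that any cluster algebra $\clusterAlgebra[\B_\circ][\overline{\coefficients_\circ}]$ of the same type arises from~$\universalClusterAlgebra$ via a unique coefficient specialization $\eta \colon \Trop{|\clusterVariables|} \to \Trop{\ell}$. For each $y \in \clusterVariables$, choose a seed~$\seed_y$ of~$\universalClusterAlgebra$ containing~$y$; the $\b{g}$-vector of~$y^T$ at~$\seed_y^T$ is then the fundamental weight~$\omega_{y^T}$, so $p[y]$ appears with exponent~$1$ in the formula for~$p_y$ at~$\seed_y$. The requirements $\eta(p_y) = \overline{p_y}$, ranged over $y \in \clusterVariables$, thus form a triangular system in the unknowns~$\eta(p[y])$, uniquely determining~$\eta$ on the generators; the mutation invariance established above then guarantees that $\eta$ intertwines coefficient mutations at every seed, hence defines a coefficient specialization.
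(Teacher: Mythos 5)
Your first half is on the right track and is exactly the route the paper itself points to: this statement is not proved in the paper but quoted from \cite[Thm~10.12]{Reading-UniversalCoefficients}, with the remark that the formula is compatible with mutation ``directly from the $\b{g}$-vector recursion'' \cite[Prop.~4.2\,(v)]{NakanishiZelevinsky}. Indeed, writing both sides of the coefficient mutation rule exponentwise in the generators $p[z]$ and comparing with the initial-seed recursion for $\b{g}$-vectors (applied in $\principalClusterAlgebra[\B^T]$, using that transposition commutes with mutation) is a short computation, valid thanks to sign-coherence; you describe this correctly in outline, although you leave the actual matching --- the only substance of this half --- unverified, and you misplace the role of sign-coherence: splitting $p_x$ into $\positiveExponents{p_x}$ and $\negativeExponents{p_x}$ is automatic in a tropical semifield, while sign-coherence is what makes the Nakanishi--Zelevinsky recursion hold in the first place.

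The genuine gap is in the universality half, which is the real content of Reading's theorem. First, your uniqueness argument does not work: at a seed $\seed_y$ containing $y$ the formula reads $p_y = p[y]\cdot\prod_{z\notin\cluster_y}(p[z])^{e_z}$, where the exponents $e_z$ are the $y$-coordinates of the $\b{g}$-vectors of the cluster variables outside the seed and are in general nonzero of either sign for many such $z$; there is no ordering of $\clusterVariables$ in which this system becomes triangular, so it does not determine the $\eta(p[y])$. Uniqueness in fact amounts to showing that no nonzero integer vector $(c_z)_{z\in\clusterVariables}$ satisfies $\sum_z c_z\,\gvector{\B^T}{z^T}=0$ simultaneously for every seed, and that requires a genuine argument (in Reading's work it comes from his mutation-linear machinery; it is not formal). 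Second, for existence you impose $\eta(p_y)=\overline{p}_{\overline{y}}$ at one chosen seed per variable and then appeal to the mutation-compatibility of the formula, but a coefficient specialization must satisfy $\eta(\positiveExponents{p_x})=\positiveExponents{\overline{p}_{\overline{x}}}$ and $\eta(\negativeExponents{p_x})=\negativeExponents{\overline{p}_{\overline{x}}}$ separately, at every seed, for an arbitrary target coefficient tuple; the compatibility you established lives entirely inside $\universalClusterAlgebra$ and says nothing about how the target's coefficients mutate against your candidate $\eta$. Verifying these identities for all targets is precisely the hard part of \cite[Thm~10.12]{Reading-UniversalCoefficients} (it uses the structure of the $\b{g}$-vector fan, e.g.\ that the $\b{g}$-vectors of each cluster form a $\Z$-basis); alternatively one could invoke the existence statement \cite[Thm~12.4]{FominZelevinsky-ClusterAlgebrasIV}, which the paper already assumes, and identify the two algebras by a pair of specializations --- but your sketch does neither.
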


\begin{remark}
\label{rem:universalToPrincipal}
In view of this result, it is straightforward to produce the coefficient specialization morphism to get any cluster algebra with principal coefficients of type~$\B_\circ$ from $\universalClusterAlgebra$.
Namely, for any seed~$\seed_\star = (\B_\star, \coefficients_\star, \cluster_\star)$ of $\universalClusterAlgebra$, we obtain~$\principalClusterAlgebra[\B_\star]$ by evaluating to $1$ all the coefficients~$p[y]$ corresponding to cluster variables~$y$ not in $\seed_\star$.
\end{remark}

We conclude this review giving an example: the cluster algebra of type $B_2$ with universal coefficients.
We will do so in terms of ``tall'' rectangular matrices to help readers, not familiar with the language we adopt here, recognize an hopefully more familiar setting.
Indeed, to pass from our seeds to the one consisting of extended exchange matrices and clusters with frozen variables, it suffices to observe that each $p\in\Trop{m}$ can be encoded in a vector.
In this way any $n$-tuple of elements of $\Trop{m}$ corresponds to a $m\times n$ integer matrix and one gets an extended exchange matrix by gluing it below the exchange matrix of the seed.
The frozen variables are the generators of $\Trop{m}$ and the rules of mutations we discussed become then the usual mutations of extended exchange matrices.
We prefer to use the notation we set up here following \cite{FominZelevinsky-ClusterAlgebrasII,FominZelevinsky-ClusterAlgebrasIV} because it makes more evident the distinction in between coefficients and cluster variables, and because it is more suited to deal with coefficient specializations.

\begin{example}
\label{exm:B2}
Consider the exchange matrix 
\[
\B_\circ = \begin{bmatrix} 0 & 1  \\ -2 & 0  \end{bmatrix}.
\]
Any cluster algebra built from this matrix will contain $6$ cluster variables.
We will call them $\clusterVariables = \{ x_1,x_2,x_3,x_4,x_5,x_6\}$.
The corresponding cluster algebra with universal coefficients will then be a subring of 
\[
\Q(x_1,x_2,p[x_1],p[x_2],p[x_3],p[x_4],p[x_5],p[x_6]).
\]
Namely it will be the cluster algebra
\[
\universalClusterAlgebra=\clusterAlgebra[\B_\circ][{\left\{\frac{p[x_1]p[x_5]p[x_6]^2}{p[x_3]},\frac{p[x_2]}{p[x_4]p[x_5]p[x_6]}\right\}}]
\]
\fref{fig:B2_example} shows the exchange graph of this algebra listing all the seeds in terms of extended exchange matrices.

\begin{figure}
	\capstart
	\centerline{\includegraphics[width=.9\textwidth]{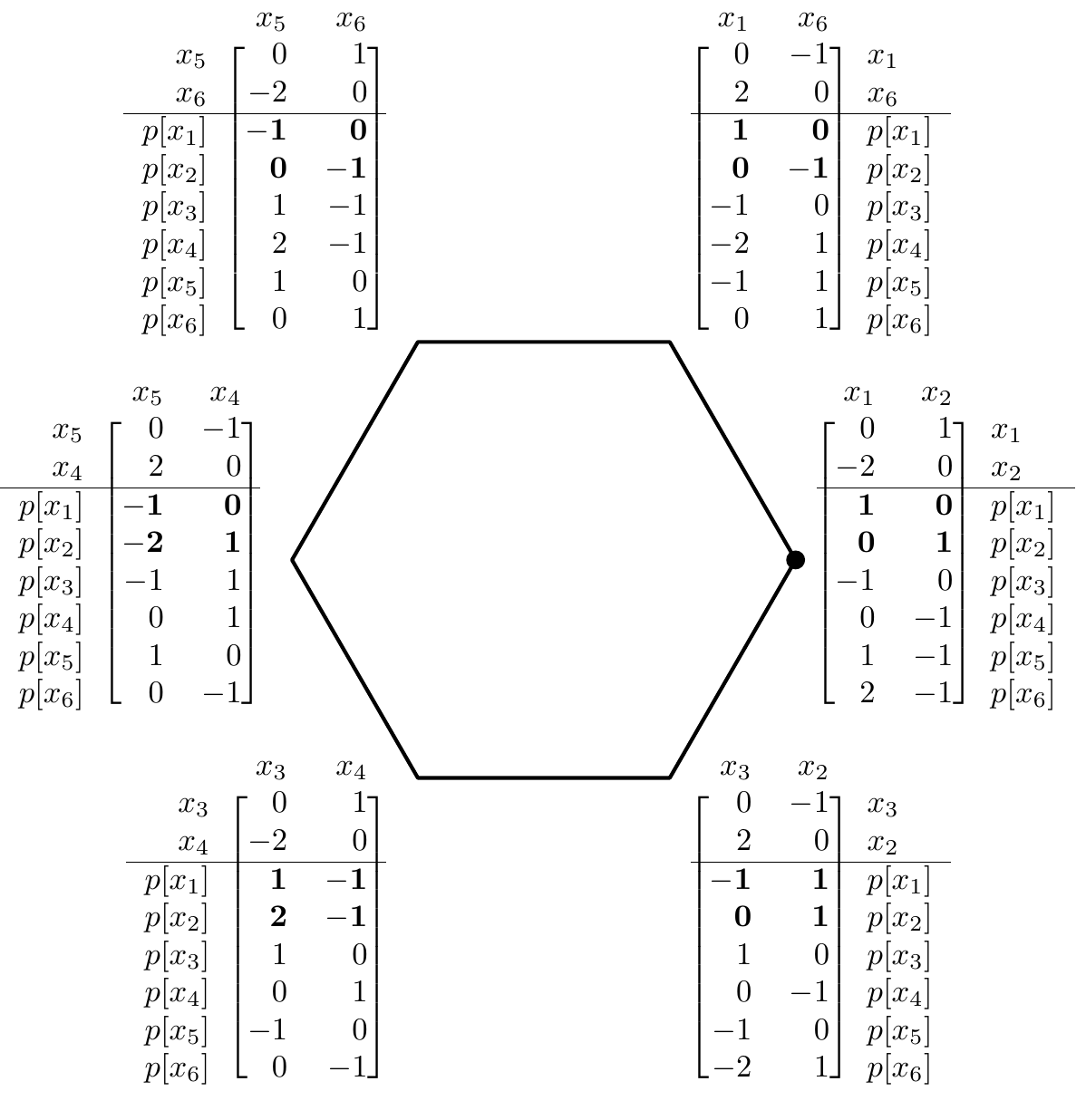}}
	\caption{The exchange graph of type $B_2$ with attached the rectangular matrices giving universal coefficients. In bold are highlighted the entries of the coefficient part that give the principal coefficient cluster algebra at the seed attached to the marked node.}
	\label{fig:B2_example}
\end{figure}
\end{example}

One final computational remark: there is a simple algorithm to compute one of the rectangular exchange matrices appearing in a cluster algebra of finite type with universal coefficients.
Let $\B$ be an exchange matrix of the given finite type having only non-negative entries above its diagonal; it is acyclic and, by \cite[Eqn.~(1.4)]{YangZelevinsky}, it corresponds to the \defn{Coxeter element} $c = s_1 \cdots s_n$ in the associated Weyl group (note that the labelling of simple roots may not be the standard one here).
Let~$\wo(c)$ denote the $c$-sorting word for~$\wo$, that is the lexicographically minimal reduced expression of~$\wo$ that appears as a subword of~$c^\infty$.
For each $s_i$ obtained reading from left to right the word $c\wo(c)$, repeat the following two steps:
\begin{itemize}
\item add a row to the bottom of $\B$ whose only non-zero entry is a $1$ in column $i$
\item replace $\B$ by its mutation in direction $i$.
\end{itemize}
The matrix obtained at the end of the procedure is the desired one.


\section{Polyhedral geometry and fans}
\label{sec:prerequisitesFans}

The second ingredient of this paper is discrete geometry of polytopes and fans.
We refer to~\cite{Ziegler-polytopes} for a textbook on this topic.

\subsection{Polyhedral fans}

A \defn{polyhedral cone} is a subset of the vector space~$V$ defined equivalently as the positive span of finitely many vectors or as the intersection of finitely many closed linear halfspaces.
Throughout the paper, we write~$\R_{\ge0}\b{\Lambda}$ for the positive span of a set~$\b{\Lambda}$ of vectors of~$V$.
The \defn{faces} of a cone~$C$ are the intersections of~$C$ with its supporting hyperplanes.
In particular, the $1$-dimensional (resp.~codimension~$1$) faces of~$C$ are called~\defn{rays} (resp.~\defn{facets}) of~$C$.
A cone is \defn{simplicial} if it is generated by a set of independent vectors.

A \defn{polyhedral fan} is a collection~$\Fan$ of polyhedral cones of~$V$ such that
\begin{itemize}
\item if~$C \in \Fan$ and~$F$ is a face of~$C$, then~$F \in \Fan$,
\item the intersection of any two cones of~$\Fan$ is a face of both.
\end{itemize}
A fan is \defn{simplicial} if all its cones are, and \defn{complete} if the union of its cones covers the ambient space~$V$.
For a simplicial fan~$\Fan$ with rays~$\cX$, the collection~$\set{X \subseteq \cX}{\R_{\ge0}X \in \Fan}$  of generating sets of the cones of~$\Fan$ defines a \defn{pseudomanifold} (in other words, a pure and thin simplicial complex, \ie with a notion of flip).
The following statement characterizes which pseudomanifolds are complete simplicial fans.
A formal proof can be found \eg in~\cite[Coro.~4.5.20]{DeLoeraRambauSantos}.

\begin{proposition}
\label{prop:characterizationFan}
Consider a pseudomanifold~$\Delta$ with vertex set~$\cX$ and a set of vectors~$\big\{ \b{r}(x) \big\}_{x \in \cX}$ of~$V$.
For~$\cluster \in \Delta$, let~${\b{r}(\cluster) \eqdef \bigset{\b{r}(x)}{x \in \cluster}}$.
Then the collection of cones~${\bigset{\R_{\ge 0}\b{r}(\cluster)}{\cluster \in \Delta}}$ forms a complete simplicial fan if and~only~if
\begin{enumerate}
\item there exists a facet~$\cluster$ of~$\Delta$ such that~$\b{r}(\cluster)$ is a basis of~$V$ and such that the open cones~$\R_{> 0}\b{r}(\cluster)$ and~$\R_{> 0}\b{r}(\cluster')$ are disjoint for any facet~$\cluster'$ of~$\Delta$ distinct from~$\cluster$;
\item for any two adjacent facets~$\cluster, \cluster'$ of~$\Delta$ with~$\cluster \ssm \{x\} = \cluster' \ssm \{x'\}$, there is a linear dependence
\[
\gamma \, \b{r}(x) + \gamma' \, \b{r}(x') + \sum_{y \in \cluster \cap \cluster'} \delta_y \, \b{r}(y) = 0
\]
on~$\b{r}(\cluster \cup \cluster')$ where the coefficients~$\gamma$ and~$\gamma'$ have the same sign.
(When these conditions hold, these coefficients do not vanish and the linear dependence is unique up to rescaling.)
\end{enumerate}
\end{proposition}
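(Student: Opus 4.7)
The plan is to prove both directions of the equivalence, with the reverse implication requiring the real work. The forward direction should be essentially routine: if $\bigset{\R_{\ge 0}\b{r}(\cluster)}{\cluster \in \Delta}$ is a complete simplicial fan, then condition (1) for any facet $\cluster$ is immediate from the definition of a fan (simplicial cones are generated by linearly independent rays, and distinct maximal cones of a fan have disjoint interiors). For condition (2), two adjacent facets $\cluster, \cluster'$ of $\Delta$ generate cones that share the wall $\R_{\ge 0}\b{r}(\cluster \cap \cluster')$, so the $n+1$ vectors in $\b{r}(\cluster \cup \cluster')$ are linearly dependent with a unique (up to scaling) relation; the matching sign of $\gamma$ and $\gamma'$ then records the geometric fact that $\b{r}(x)$ and $\b{r}(x')$ lie on opposite open halfspaces determined by $\mathrm{span}\b{r}(\cluster \cap \cluster')$.

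For the sufficiency, I would proceed by induction on the flip distance from the base facet $\cluster$ of condition (1) in the dual graph of $\Delta$ (which is connected because $\Delta$ is a pseudomanifold). At each step, suppose the cones $\R_{\ge 0}\b{r}(\cluster_0), \ldots, \R_{\ge 0}\b{r}(\cluster_k)$ (with $\cluster_0 = \cluster$) have already been shown to be simplicial with pairwise disjoint open interiors. Consider a facet $\cluster_{k+1}$ adjacent to some $\cluster_i$ via a flip $x \leftrightarrow x'$. The linear dependence from condition (2), together with the sign hypothesis, lets us express $\b{r}(x') = -(\gamma/\gamma')\b{r}(x) - \sum_y (\delta_y/\gamma')\b{r}(y)$ with $-\gamma/\gamma' < 0$. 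This implies that $\b{r}(\cluster_{k+1})$ is again a basis of~$V$ and that $\R_{> 0}\b{r}(\cluster_{k+1})$ lies in the open halfspace opposite to $\R_{> 0}\b{r}(\cluster_i)$ across the wall generated by $\b{r}(\cluster_i \cap \cluster_{k+1})$, so the two cones meet only in this common face.

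The delicate step is extending the pairwise disjointness of open cones from neighbors to all pairs of previously-constructed facets. The cleanest way is a degree/parity argument: pick a generic vector $v \in V$ avoiding all the hyperplanes $\mathrm{span}\b{r}(\cluster' \cap \cluster'')$ spanned by codimension-one faces, and consider the function $N(v) = \#\bigset{\cluster' \in \Delta \text{ facet}}{v \in \R_{>0}\b{r}(\cluster')}$. Condition (2) with matching signs forces $N$ to be locally constant as $v$ crosses a wall (we lose membership in one adjacent cone and gain it in the other), and condition (1) pins the value $N = 1$ at a generic point of $\R_{>0}\b{r}(\cluster)$. Combined with the induction above, this yields that all open cones are pairwise disjoint and that their closures cover~$V$, giving the required complete simplicial fan.

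The main obstacle I anticipate is precisely this globalization: condition (2) is purely local (about pairs of adjacent facets), so turning it into a global statement about disjointness and coverage requires either a careful induction coupled with the pseudomanifold structure or the degree argument sketched above. One must also verify that the complement of the finite union of the walls $\mathrm{span}\b{r}(\cluster' \cap \cluster'')$ is connected enough for $N$ to take a well-defined constant value on its open stratum; this is where completeness of $V$ as a real vector space and the simpliciality of the cones guarantee that every generic $v$ can be reached from a generic point of $\R_{>0}\b{r}(\cluster)$ by a path crossing walls transversally one at a time.
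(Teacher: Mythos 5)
The paper itself contains no proof of this proposition: it only points to \cite{DeLoeraRambauSantos}, Coro.~4.5.20, and the argument there is of exactly the wall-crossing/counting flavour you propose. So your overall strategy is the standard one. The necessity direction is fine, and so is your local induction step; note that this induction is also what guarantees that \emph{every} facet~$\cluster'$ of~$\Delta$ has~$\b{r}(\cluster')$ a basis, which you need before the function~$N$ even behaves well (otherwise the ``walls'' need not be pieces of hyperplanes). The genericity issues you flag are handled by finiteness: there are finitely many hyperplanes spanned by ridges and finitely many cones~$\R_{\ge 0}\b{r}(S)$ of dimension at most~$n-2$ over smaller faces, so one can choose a path avoiding the latter, avoiding pairwise intersections of the former, and crossing the former transversally one at a time; at such a crossing only facets adjacent along ridges whose open wall cone contains the crossing point are affected, they come in pairs by thinness, and condition~(2) makes each pair sit on opposite sides of the hyperplane, so~$N$ is unchanged.

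The genuine gap is your concluding sentence: ``all open cones are pairwise disjoint and their closures cover~$V$'' does \emph{not} by itself yield a complete simplicial fan, because being a fan also requires that any two cones intersect in a common face of both, and there exist coverings of~$V$ by full-dimensional simplicial cones with pairwise disjoint interiors that are not face-to-face (the cone analogue of a non-face-to-face tiling: subdivide the two half-spaces bounded by a hyperplane~$H$ by simplicial cones inducing two different subdivisions of~$H$). Such examples violate thinness, so the implication is rescuable, but it needs an argument: use condition~(2) to see that the two facets flanking a ridge~$R$ meet exactly along~$\R_{\ge 0}\b{r}(R)$, then use~$N\equiv 1$ to show no third facet cone can touch the relative interior of a wall from either side (it would create an interior overlap), and from this local structure deduce that any two facet cones meet in a common face; proper intersection of the lower-dimensional cones then follows since faces of simplicial cones intersect in faces. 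This is precisely where the cited reference spends its effort, so a complete write-up must include it rather than fold it into the last sentence.
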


\subsection{Polytopes and normal fans}

A \defn{polytope} is a subset~$P$ of~$V^\vee$ defined equivalently as the convex hull of finitely many points or as a bounded intersection of finitely many closed affine halfspaces.
The \defn{faces} of~$P$ are the intersections of~$P$ with its supporting hyperplanes.
In particular, the dimension~$0$ (resp.~dimension~$1$, resp.~codimension~$1$) faces of~$P$ are called \defn{vertices} (resp.~\defn{edges}, resp.~\defn{facets}) of~$P$.
The (outer) \defn{normal cone} of a face~$F$ of~$P$ is the cone in $V$ generated by the outer normal vectors of the facets of~$P$ containing~$F$.
The (outer) \defn{normal fan} of~$P$ is the collection of the (outer) normal cones of all its faces.
We say that a complete polyhedral fan in~$V$ is \defn{polytopal} when it is the normal fan of a polytope in~$V^\vee$.
The following statement provides a characterization of polytopality of complete simplicial fans.
It is a reformulation of regularity of triangulations of vector configurations, introduced in the theory of secondary polytopes~\cite{GelfandKapranovZelevinsky}, see also~\cite{DeLoeraRambauSantos}.
We present here a convenient formulation from~\cite[Lem.~2.1]{ChapotonFominZelevinsky}.

\begin{proposition}
\label{prop:characterizationPolytopalFan}
\label{prop:polytopalityFan}
Consider a pseudomanifold~$\Delta$ with vertex set~$\cX$ and a set of vectors~$\big\{ \b{r}(x) \big\}_{x \in \cX}$ of~$V$ such that~$\Fan \eqdef {\bigset{\R_{\ge 0}\b{r}(\cluster)}{\cluster \in \Delta}}$ forms a complete simplicial fan.
Then the following are equivalent:
\begin{enumerate}
\item $\Fan$ is the normal fan of a simple polytope in~$V^\vee$;
\item There exists a map~$\F: \cX \to \R_{> 0}$ such that for any two adjacent facets~$\cluster, \cluster'$ of~$\Delta$ with ${\cluster \ssm \{x\} = \cluster' \ssm \{x'\}}$, we have
\[
\gamma \, \F(x) + \gamma' \, \F(x') + \sum_{y \in \cluster \cap \cluster'} \delta_y \, \F(y) > 0,
\]
where
\[
\gamma \, \b{r}(x) + \gamma' \, \b{r}(x') + \sum_{y \in \cluster \cap \cluster'} \delta_y \, \b{r}(y) = 0
\]
is the unique (up to rescaling) linear dependence with~$\gamma, \gamma' > 0$ between the rays of~$\b{r}(\cluster \cup \cluster')$.
\end{enumerate}
Under these conditions, $\Fan$ is the normal fan of the polytope defined by
\[
\bigset{\b{v} \in V^\vee}{\dotprod{\b{r}(x)}{\b{v}} \le \F(x) \text{ for all } x \in \cX}.
\]
\end{proposition}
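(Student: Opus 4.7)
The plan is to exploit the correspondence between facets~$\cluster$ of~$\Delta$ and candidate vertices~$\b{v}_\cluster \in V^\vee$, defined as the unique solution of the linear system~$\dotprod{\b{r}(x)}{\b{v}_\cluster} = \F(x)$ for~$x \in \cluster$. This is well-defined because~$\b{r}(\cluster)$ is a basis of~$V$ when~$\Fan$ is simplicial. The key observation, used in both directions of the equivalence, is that for two adjacent facets~$\cluster, \cluster'$ with~$\cluster \ssm \{x\} = \cluster' \ssm \{x'\}$, pairing the linear dependence
\[
\gamma\,\b{r}(x) + \gamma'\,\b{r}(x') + \sum_{y \in \cluster \cap \cluster'} \delta_y\,\b{r}(y) = 0
\]
with~$\b{v}_\cluster$ yields
\[
\gamma'\,\dotprod{\b{r}(x')}{\b{v}_\cluster} = -\gamma\,\F(x) - \sum_{y \in \cluster \cap \cluster'}\delta_y\,\F(y).
\]
Since~$\gamma, \gamma' > 0$, the inequality~$\dotprod{\b{r}(x')}{\b{v}_\cluster} \le \F(x')$ is therefore equivalent to the inequality of condition~(2).

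For~$(1) \Rightarrow (2)$: Assume~$\Fan$ is the outer normal fan of a simple polytope~$Q$. After translating so that the origin lies in its interior, write~$Q = \{\b{v} \in V^\vee : \dotprod{\b{r}(x)}{\b{v}} \le \F(x), \; x \in \cX\}$ with~$\F: \cX \to \R_{>0}$, each inequality being facet-defining because the rays of~$\Fan$ are precisely~$\{\b{r}(x)\}_{x \in \cX}$. Each facet~$\cluster$ of~$\Delta$ corresponds to a vertex~$\b{v}_\cluster$ of~$Q$ at which exactly the inequalities indexed by~$\cluster$ are tight, so the containment~$\b{v}_\cluster \in Q$ combined with the pairing computation above directly gives condition~(2); the strict sign follows from~$\b{v}_\cluster \neq \b{v}_{\cluster'}$, which is part of condition~(1) of Proposition~\ref{prop:characterizationFan}.

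For~$(2) \Rightarrow (1)$: Define~$P \eqdef \{\b{v} \in V^\vee : \dotprod{\b{r}(x)}{\b{v}} \le \F(x) \text{ for all } x \in \cX\}$, which is bounded because~$\b{r}(\cX)$ positively spans~$V$ by completeness of~$\Fan$. The heart of the proof is to verify that every~$\b{v}_\cluster$ lies in~$P$, namely~$\dotprod{\b{r}(y)}{\b{v}_\cluster} \le \F(y)$ for every~$y \in \cX$. For~$y \in \cluster$ this is trivial, and for~$y$ added in a single flip of~$\cluster$ it follows from the pairing observation above (with~$y$ in the role of~$x'$); in general one propagates along a flip path in the dual graph of~$\Delta$ from~$\cluster$ to a facet containing~$y$, which exists by the pseudomanifold structure. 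Once every~$\b{v}_\cluster$ lies in~$P$, it is a vertex of~$P$ with outer normal cone exactly~$\R_{\ge 0}\b{r}(\cluster)$, and completeness of~$\Fan$ then forces the normal fan of~$P$ to coincide with~$\Fan$, identifying~$P$ as the polytope displayed in the statement.

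The principal obstacle is the propagation step in~$(2) \Rightarrow (1)$: condition~(2) is a purely local inequality between two facets meeting in codimension one, while the required~$\b{v}_\cluster \in P$ is a global statement ranging over all~$y \in \cX$. The right way to organize the induction is to track how~$\b{v}_\cdot$ varies across a flip~$\cluster \rightsquigarrow \cluster'$: only the coordinate dual to~$\b{r}(x)$ in the basis~$\b{r}(\cluster)$ is modified, and the sign coherence~$\gamma, \gamma' > 0$ in the unique (up to rescaling) linear dependence guarantees that this change happens in the direction that preserves every inequality previously established. The uniqueness of the linear dependence, ensured by simpliciality of~$\Fan$ and used here decisively, is the reason the entire bookkeeping closes up consistently.
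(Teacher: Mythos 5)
Your setup (the candidate vertices $\b{v}_\cluster$ defined by $\dotprod{\b{r}(x)}{\b{v}_\cluster} = \F(x)$ for $x \in \cluster$, and the pairing of the wall dependence with $\b{v}_\cluster$) is the right framework, and your direction $(1) \Rightarrow (2)$ is correct, including the strictness argument. Note that the paper itself does not prove this proposition: it invokes it as a known reformulation of regularity, citing~\cite{ChapotonFominZelevinsky} and~\cite{DeLoeraRambauSantos}, so you are reconstructing a proof those references supply.

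The genuine gap is in the heart of $(2) \Rightarrow (1)$, the ``propagation'' showing $\b{v}_\cluster \in P$. Condition (2) only controls the pairing of $\b{v}_{\cluster_i}$ with the single new ray $\b{r}(x'_{i+1})$ acquired at each wall; it says nothing about $\dotprod{\b{r}(z)}{\b{v}_{\cluster_{i+1}}}$ for vertices $z$ met earlier along the path, nor about $\dotprod{\b{r}(y)}{\b{v}_\cluster}$ for a far-away $y$. Concretely, $\b{v}_{\cluster_{i+1}} - \b{v}_{\cluster_i}$ is a positive multiple of the linear form vanishing on the wall and positive on $\b{r}(x'_{i+1})$, so its pairing with $\b{r}(z)$ for $z \notin \cluster_i \cup \cluster_{i+1}$ has an uncontrolled sign; the claim that the flip ``preserves every inequality previously established'' is exactly what needs proof and does not follow from $\gamma, \gamma' > 0$. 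Likewise, if you instead fix $\b{v}_\cluster$ and try to induct on the inequalities $\dotprod{\b{r}(z)}{\b{v}_\cluster} \le \F(z)$ along a dual path, the induction step requires bounding $-\gamma\dotprod{\b{r}(x_i)}{\b{v}_\cluster} - \sum_y \delta_y \dotprod{\b{r}(y)}{\b{v}_\cluster}$ from above by quantities you only know from below (and the $\delta_y$ have arbitrary signs), so it does not close. The missing idea, which is how the cited proofs proceed, is to replace the combinatorial flip path by a straight segment in $V$ from a generic interior point of $\R_{\ge 0}\b{r}(\cluster)$ to the vector $\b{r}(y)$: the piecewise linear function equal to $w \mapsto \dotprod{w}{\b{v}_{\cluster''}}$ on each maximal cone $\R_{\ge 0}\b{r}(\cluster'')$ is well defined, and condition (2) says precisely that at each wall the jump of its gradient is a positive multiple of the linear form crossed in the direction of travel, so the function is convex along the segment; convexity then yields $\dotprod{\b{r}(y)}{\b{v}_\cluster} \le \F(y)$ globally (with strict inequality for $y \notin \cluster$), after which your final identification of $P$ and its normal fan goes through. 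Without this directional control coming from the straight-line geometry (or an equivalent support-function argument), the local inequality cannot be globalized along an arbitrary dual-graph path.
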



\section{The $\b{g}$-vector fan}
\label{sec:gvectorFan}

We first recast a well known fact concerning the cones spanned by the $\b{g}$-vectors of a finite type cluster algebra with principal coefficients.
We insist on the fact that the following statement is valid for any finite type exchange matrix~$\B_\circ$, acyclic or not.

\begin{theorem}
\label{thm:gvectorFan}
For any finite type exchange matrix~$\B_\circ$, the collection of cones 
\[
  \gvectorFan \eqdef \bigset{\R_{\ge0} \gvectors{\B_\circ}{\seed}}{\seed \text{ seed of } \principalClusterAlgebra},
\]
together with all their faces, forms a complete simplicial fan, called the \defn{$\b{g}$-vector fan} of~$\B_\circ$. 
\end{theorem}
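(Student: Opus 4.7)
The plan is to verify the hypotheses of Proposition~\ref{prop:characterizationFan} on the cluster complex $\clusterComplex$ of $\principalClusterAlgebra$, a pseudomanifold by the discussion in Section~\ref{sec:prerequisitesCA}, using the $\b{g}$-vectors $\b{r}(x) \eqdef \gvector{\B_\circ}{x}$ as ray generators.

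For condition~(1), we take the initial seed $\seed_\circ$ as distinguished facet. By the degree assignment~$\deg(\B_\circ, x) = \omega_x$, the collection $\gvectors{\B_\circ}{\cluster_\circ}$ is precisely the fundamental weight basis of~$V$, hence a basis, and $\R_{>0}\gvectors{\B_\circ}{\cluster_\circ}$ is the open positive weight orthant. To rule out any other open cone meeting this orthant, suppose $v$ lies in both $\R_{>0}\{\omega_x\}_{x \in \cluster_\circ}$ and $\R_{>0}\gvectors{\B_\circ}{\seed'}$, and expand $v = \sum_{x' \in \cluster'} \lambda_{x'} \gvector{\B_\circ}{x'}$ with all $\lambda_{x'} > 0$. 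Theorem~\ref{thm:duality} identifies each coefficient as $\lambda_{x'} = \dotprod{\gvector{\B_\circ}{x'}}{\cvector{\B_\circ^\vee}{\seed'^\vee}{x'^\vee}}$ evaluated on $v$, namely $\lambda_{x'} = \dotprod{v}{\cvector{\B_\circ^\vee}{\seed'^\vee}{x'^\vee}}$. Since $v$ is a positive combination of fundamental weights and each $\b{c}$-vector is sign-coherent, positivity of every $\lambda_{x'}$ forces each $\cvector{\B_\circ^\vee}{\seed'^\vee}{x'^\vee}$ to be a positive root. We then invoke the fact that in finite type the initial seed is the unique seed of $\principalClusterAlgebra[\B_\circ^\vee]$ with this property (a consequence of the theory of green mutations and the finiteness of the cluster complex), so $\seed'^\vee = \seed_\circ^\vee$ and therefore $\seed' = \seed_\circ$.

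For condition~(2), fix adjacent facets $\seed, \seed'$ with $\cluster \ssm \{x\} = \cluster' \ssm \{x'\}$. The $n+1$ vectors $\gvectors{\B_\circ}{\cluster \cup \cluster'}$ sit in the $n$-dimensional space $V$, so they admit a non-trivial linear dependence
\[
\gamma \, \gvector{\B_\circ}{x} + \gamma' \, \gvector{\B_\circ}{x'} + \sum_{y \in \cluster \cap \cluster'} \delta_y \, \gvector{\B_\circ}{y} = 0.
\]
The mutation formula $p'_{x'^\vee} = p_{x^\vee}^{-1}$ for principal coefficients in $\principalClusterAlgebra[\B_\circ^\vee]$, read in the simple root basis, gives $\cvector{\B_\circ^\vee}{\seed'^\vee}{x'^\vee} = -\cvector{\B_\circ^\vee}{\seed^\vee}{x^\vee}$. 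Pairing the displayed identity with $\cvector{\B_\circ^\vee}{\seed'^\vee}{x'^\vee}$ annihilates the middle summands by the duality of Theorem~\ref{thm:duality} (since $\cluster \cap \cluster' = \cluster' \ssm \{x'\}$), returns $\gamma' \cdot 1$ on the second term, and returns $\gamma \cdot (-1)$ on the first term after using the sign flip. Hence $-\gamma + \gamma' = 0$, so $\gamma = \gamma'$, which are in particular non-zero and of the same sign, as required. The basis property of $\gvectors{\B_\circ}{\seed}$ at every seed then follows, since condition~(1) holds at $\seed_\circ$ and condition~(2) propagates the basis property across mutations.

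The main obstacle is the uniqueness claim used in condition~(1): that the initial seed of $\principalClusterAlgebra[\B_\circ^\vee]$ is the only seed whose $\b{c}$-vector collection consists entirely of positive roots. The rest of the argument is essentially a formal manipulation of Theorem~\ref{thm:duality} together with the elementary $\b{c}$-vector mutation rule, so the finite-type hypothesis enters exactly at this point; any attempt to reason purely with the combinatorics of mutation, without this global uniqueness, would fail to separate the cone of $\seed_\circ$ from a hypothetical competitor whose $\b{c}$-vectors are all positive roots.
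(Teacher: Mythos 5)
Your route is the third of the proofs sketched in the paper: check the two conditions of Proposition~\ref{prop:characterizationFan} on the cluster complex with the $\b{g}$-vectors as ray generators. Your handling of condition~(2) is correct and slightly different from the paper's: instead of establishing the explicit dependence of Lemma~\ref{lem:linearDependence} (homogeneity of the exchange relation plus sign-coherence), you pair an arbitrary dependence with $\cvector{\B_\circ^\vee}{\seed'^\vee}{x'^\vee}$, use the sign flip $\cvector{\B_\circ^\vee}{\seed'^\vee}{x'^\vee}=-\cvector{\B_\circ^\vee}{\seed^\vee}{x^\vee}$ coming from the coefficient mutation rule, and invoke Theorem~\ref{thm:duality} to get $\gamma=\gamma'$; since the $\b{g}$-vectors of $\cluster\cap\cluster'$ are part of the basis $\gvectors{\B_\circ}{\seed}$, these coefficients cannot both vanish, so they are nonzero of the same sign. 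That is enough for Theorem~\ref{thm:gvectorFan}, although the paper proves the sharper Lemma~\ref{lem:linearDependence} anyway because the explicit coefficients are reused in the proof of Theorem~\ref{thm:polytopality}.

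The genuine gap is in condition~(1), exactly where you flagged it, and your proposed justification does not close it. Acyclicity of the green-mutation digraph together with finiteness of the cluster complex only yields the \emph{existence} of a source, \ie of a seed all of whose dual $\b{c}$-vectors are positive; what you need is its \emph{uniqueness}, which is precisely equivalent to the statement ``the initial seed is the only seed with all $\b{c}$-vectors positive'', so appealing to ``the theory of green mutations'' names the claim rather than proving it. Moreover, several proofs of this uniqueness (and of the source/lattice properties of the oriented exchange graph) in the literature go through the Cambrian or $\b{g}$-vector fan structure, so an unspecific citation here risks circularity with the very theorem being proved. A non-circular patch would be: by sign-coherence and the relation of~\cite{NakanishiZelevinsky} between the $\b{c}$-matrices computed from the two reference seeds, a seed whose $\b{c}$-vectors are all positive has a nonnegative $\b{c}$-matrix with nonnegative inverse, hence a permutation matrix; one then needs that in finite type distinct seeds have distinct $\b{c}$-matrices (equivalently, via Theorem~\ref{thm:duality}, distinct $\b{g}$-matrices), itself a nontrivial finite-type input. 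The paper avoids this issue by deducing condition~(1) from the unique cluster expansion property (cluster monomials form a basis of $\principalClusterAlgebra$ in finite type), or alternatively by reducing to the acyclic Cambrian case and applying the initial-seed recursion for $\b{g}$-vectors; you should either import one of these inputs explicitly or supply a reference whose proof does not presuppose that the $\b{g}$-vector cones form a fan.
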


There are several ways to find or deduce Theorem~\ref{thm:gvectorFan} from the literature.
First, it was established in the acyclic case in~\cite{ReadingSpeyer, YangZelevinsky, Stella} (see Example~\ref{exm:CambrianFan}).
As already observed by N.~Reading in~\cite[Thm.~10.6]{Reading-UniversalCoefficients}, the general case then follows from the initial seed recursion on $\b{g}$-vectors~\cite[Prop.~4.2\,(v)]{NakanishiZelevinsky}, valid thanks to sign-coherence.
A second proof would be to use the unique cluster expansion property of any vector in the weight lattice (following from the fact that cluster monomials are a basis of $\principalClusterAlgebra$ in finite type), and to use approximation by this lattice to show that any vector is covered exactly once by the interiors of the cones of the $\b{g}$-vector fan.
Note that contrarily to what sometimes appears in the literature, this approximation argument is subtle as it relies on the integrity of the $\b{g}$-vectors\footnote{For an illustration of the subtlety, consider the~$8$  cones in~$\R^3$ defined by the coordinate hyperplanes, rotate the $4$ cones with~$x \ge 0$ around the $x$-axis by~$\pi/4$, and finally rotate all the cones around the origin by an irrational rotation so that each rational direction belongs to the interior of one of the $8$ resulting cones. Then any vector in~$\Z^3$ belongs to a single cone, but the resulting cones do not form a fan (since the cones with~$x \ge 0$ intersect improperly those with~$x \le 0$).}.
Finally, another possible proof is to use Proposition~\ref{prop:characterizationFan}: the first point is a simplified version of the unique expansion property, and the second point is a consequence of the following description of the linear dependence between the $\b{g}$-vectors of two adjacent clusters, which will be crucial in the next section.

\begin{lemma}
\label{lem:linearDependence}
For any finite type~exchange matrix~$\B_\circ$ and any adjacent seeds~${(\B, \coefficients, \cluster)}$ and~${(\B', \coefficients', \cluster')}$ in $\principalClusterAlgebra$ with~$\cluster \ssm \{x\} = \cluster' \ssm \{x'\}$, the $\b{g}$-vectors of~$\cluster \cup \cluster'$ satisfy precisely one of the following two linear dependences
\[
\biggvector{\B_\circ}{x} + \biggvector{\B_\circ}{x'} = \sum_{\substack{y \in \cluster \cap \cluster' \\ b_{xy} < 0}} -b_{xy} \, \biggvector{\B_\circ}{y}
\quad\text{or}\quad
\biggvector{\B_\circ}{x} + \biggvector{\B_\circ}{x'} = \sum_{\substack{y \in \cluster \cap \cluster' \\ b_{xy} > 0}} b_{xy} \, \biggvector{\B_\circ}{y}.
\]
\end{lemma}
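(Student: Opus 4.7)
The plan is to derive the linear dependence directly from the exchange relation by a degree computation, using the $\Z^n$-grading on the principal-coefficient cluster algebra $\principalClusterAlgebra$ recalled earlier in the paper.

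First, I would use the fact that every cluster variable of $\principalClusterAlgebra$ is homogeneous with $\deg(\B_\circ, x) = \gvector{\B_\circ}{x}$, and that every exchange relation is itself homogeneous for this grading. Applied to the exchange relation
\[
x x' = \positiveExponents{p_x} \prod_{\substack{y \in \cluster \\ b_{xy} > 0}} y^{b_{xy}} + \negativeExponents{p_x} \prod_{\substack{y \in \cluster \\ b_{xy} < 0}} y^{-b_{xy}}
\]
between $x$ and~$x'$, this forces the chain of equalities
\[
\biggvector{\B_\circ}{x} + \biggvector{\B_\circ}{x'} = \deg\bigl( \positiveExponents{p_x} \bigr) + \sum_{\substack{y \in \cluster \\ b_{xy} > 0}} b_{xy} \, \biggvector{\B_\circ}{y} = \deg\bigl( \negativeExponents{p_x} \bigr) + \sum_{\substack{y \in \cluster \\ b_{xy} < 0}} (-b_{xy}) \, \biggvector{\B_\circ}{y}.
\]

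Second, I would invoke the sign-coherence of $\b{c}$-vectors to collapse one of the two $\deg(\{p_x\}_\pm)$ contributions. By definition $p_x = \prod_{z \in \cluster_\circ} p_z^{c_{zx}}$, where the $c_{zx}$ are the coordinates of $\cvector{\B_\circ}{\seed}{x}$ in the basis of simple roots, and sign-coherence (available here as an immediate consequence of Theorem~\ref{thm:roots}, which asserts that every $\b{c}$-vector is a root) ensures that the $c_{zx}$ all share the same weak sign. If they are all non-negative, then $\negativeExponents{p_x} = 1 \in \Trop{n}$ has degree zero, and the rightmost expression above yields the first alternative of the lemma. If they are all non-positive, then $\positiveExponents{p_x} = 1$ symmetrically and the middle expression yields the second alternative. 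Since $b_{xx} = 0$, the index sets $\{y \in \cluster : b_{xy} > 0\}$ and $\{y \in \cluster : b_{xy} < 0\}$ are both contained in $\cluster \ssm \{x\} = \cluster \cap \cluster'$, matching the indexing in the statement.

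No step is really difficult: the whole argument is a short homogeneity computation whose only non-trivial input is sign-coherence, already in hand from Theorem~\ref{thm:roots}. The one point worth being careful about is that it is the vanishing of $\deg\bigl( \positiveExponents{p_x} \bigr)$ or $\deg\bigl( \negativeExponents{p_x} \bigr)$ in $V$ --- rather than any cancellation between the $p$- and cluster-variable contributions --- that produces a clean linear dependence on $\b{g}$-vectors alone, and this vanishing is guaranteed precisely by sign-coherence.
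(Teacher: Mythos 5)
Your proposal is correct and follows essentially the same route as the paper: apply the $\Z^n$-grading to the exchange relation between $x$ and $x'$, then use sign-coherence (here available via Theorem~\ref{thm:roots}) to conclude that exactly one of $\positiveExponents{p_x}$, $\negativeExponents{p_x}$ equals $1$ and hence has degree zero, which collapses the degree identity to one of the two stated dependences. Nothing further is needed.
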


\begin{proof}
This is a straightforward consequence of the definition of $\b{g}$-vectors together with sign coherence.
Indeed all exchange relations in $\principalClusterAlgebra$ are homogeneous and
\[
x x' = \positiveExponents{p_x} \prod_{\substack{y \in \cluster \\ b_{xy}  > 0}} y^{b_{xy}} + \negativeExponents{p_x} \prod_{\substack{y \in \cluster \\ b_{xy}  <0}} y^{-b_{xy}}
\]
means that 
\begin{align*}
\deg(\B_\circ, x) + \deg(\B_\circ, x')
& = \deg(\B_\circ, \positiveExponents{p_x}) + \sum_{\substack{y \in \cluster \cap \cluster' \\ b_{xy} > 0}} b_{xy} \, \deg(\B_\circ, y) \\
& = \deg(\B_\circ, \negativeExponents{p_x}) + \sum_{\substack{y \in \cluster \cap \cluster' \\ b_{xy} < 0}} -b_{xy} \, \deg(\B_\circ, y).
\end{align*}
Now, by sign-coherence, exactly one of $\positiveExponents{p_x}$ and $\negativeExponents{p_x}$ is $1$ so that its degree is $0$.
\end{proof}

\begin{remark}
Note that which of the two possible linear dependences is satisfied by the $\b{g}$-vectors of~$\cluster \cup \cluster'$ depends on the initial exchange matrix~$\B_\circ$.
In particular, the geometry of the $\b{g}$-vector fan~$\gvectorFan$ changes as $\B_\circ$ varies within a given mutation class.
\end{remark}

For any finite type exchange matrix $\B_\circ$, the $\b{g}$-vector fan $\gvectorFan$ can be seen as a coarsening of two other fans naturally associated to $\A{\B_\circ}$.
Denote by $\CoxeterFan$ the \defn{dual Coxeter fan} \ie the fan of regions of the hyperplane arrangement given by the root system of $\A{\B_\circ^\vee}$.
Similarly let $\cvectorFan$ be the \defn{dual $\b{c}$-vector fan} \ie the fan of regions of the arrangement of hyperplanes orthogonal to all the $\b{c}$-vector of $\principalClusterAlgebra[\B_\circ^\vee]$.
By Theorem~\ref{thm:duality}, $\gvectorFan$ coarsens $\cvectorFan$ which, in turn, coarsens $\CoxeterFan$ by Theorem~\ref{thm:roots}.
See Figure~\ref{fig:exmFans} for examples of these fans for different exchange matrices of type $A_3$.

\begin{remark}
\label{rem:shi}
By further inspecting \cite[Thm.~1.3]{NakanishiStella} we can say more about $\cvectorFan$.
Indeed its supporting hyperplane arrangement contains the hyperplanes associated to small roots in the root system of~$\A{\B_\circ^\vee}$.
Therefore, it turns out that the dual $\b{c}$-vector fan~$\cvectorFan[\B_\circ^\vee]$ intersected with the Tits cone contains the \defn{Shi arrangement} for the root system of $\A{\B_\circ^\vee}$ (see \cite[Sect.~3.6~\&~Def.~3.18]{HohlwegNadeauWilliams} for a review on the topic).
In order to see that, it is enough to compare the description of the possible supports of the $\b{c}$-vectors (in terms of simple roots) given in~\cite[Thm.~1.3]{NakanishiStella} with the description of the possible supports of small roots given in~\cite{Brink}.
\end{remark}

\begin{example}
\label{exm:CambrianFan}
When the exchange matrix~$\B_\circ$ is acyclic, the $\b{g}$-vector fan is the Cambrian fan constructed by N.~Reading and D.~Speyer~\cite{ReadingSpeyer}, while the dual $\b{c}$-vector fan is the type~$\A{\B_\circ^\vee}$ Coxeter fan.
Section~\ref{subsec:acyclic} provides a detailed discussion of the acyclic case.
\end{example}

\begin{example}
\label{exm:specific1}
\fref{fig:gvectorFans} illustrates the $\b{g}$-vector fans for the initial exchange matrices
\\[.3cm]
\centerline{
\begin{tabular}{c@{\qquad}c@{\qquad}c}
	$\begin{bmatrix} 0 & -1 & 1 \\ 1 & 0 & -1 \\ -1 & 1 & 0 \end{bmatrix}$ & and & $\begin{bmatrix} 0 & -1 & 2 \\ 1 & 0 & -2 \\ -1 & 1 & 0 \end{bmatrix}$ \\[.6cm]
	(type~$A_3$ cyclic) & & (type~$C_3$ cyclic)
\end{tabular}
} \\[.2cm]
Note that these matrices are the only examples of $3$-dimensional cyclic exchange matrices (up to duality and simultaneous permutations of rows and columns).

\begin{figure}
	\capstart
	\centerline{\includegraphics[width=.45\textwidth]{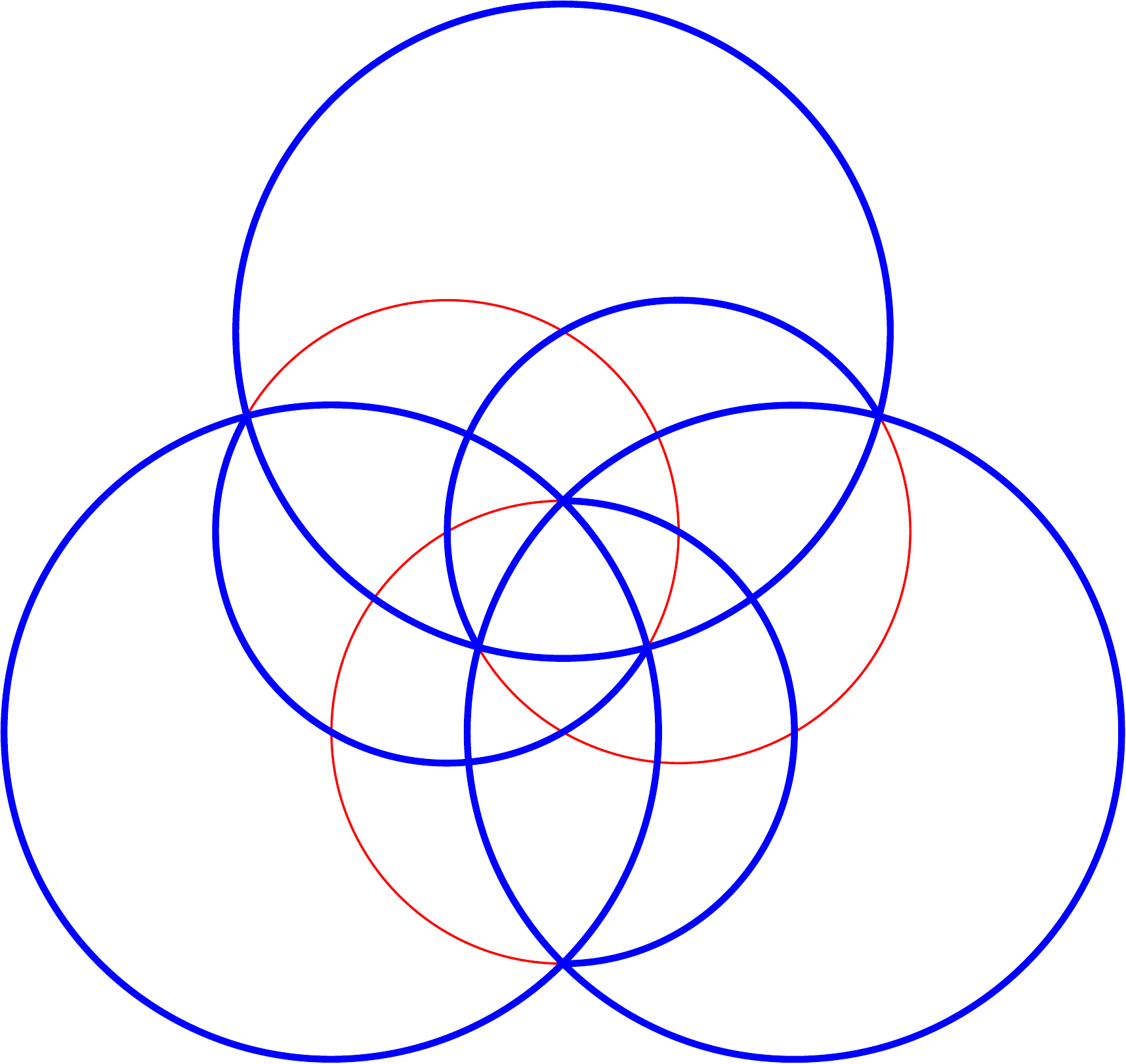} \qquad \includegraphics[width=.45\textwidth]{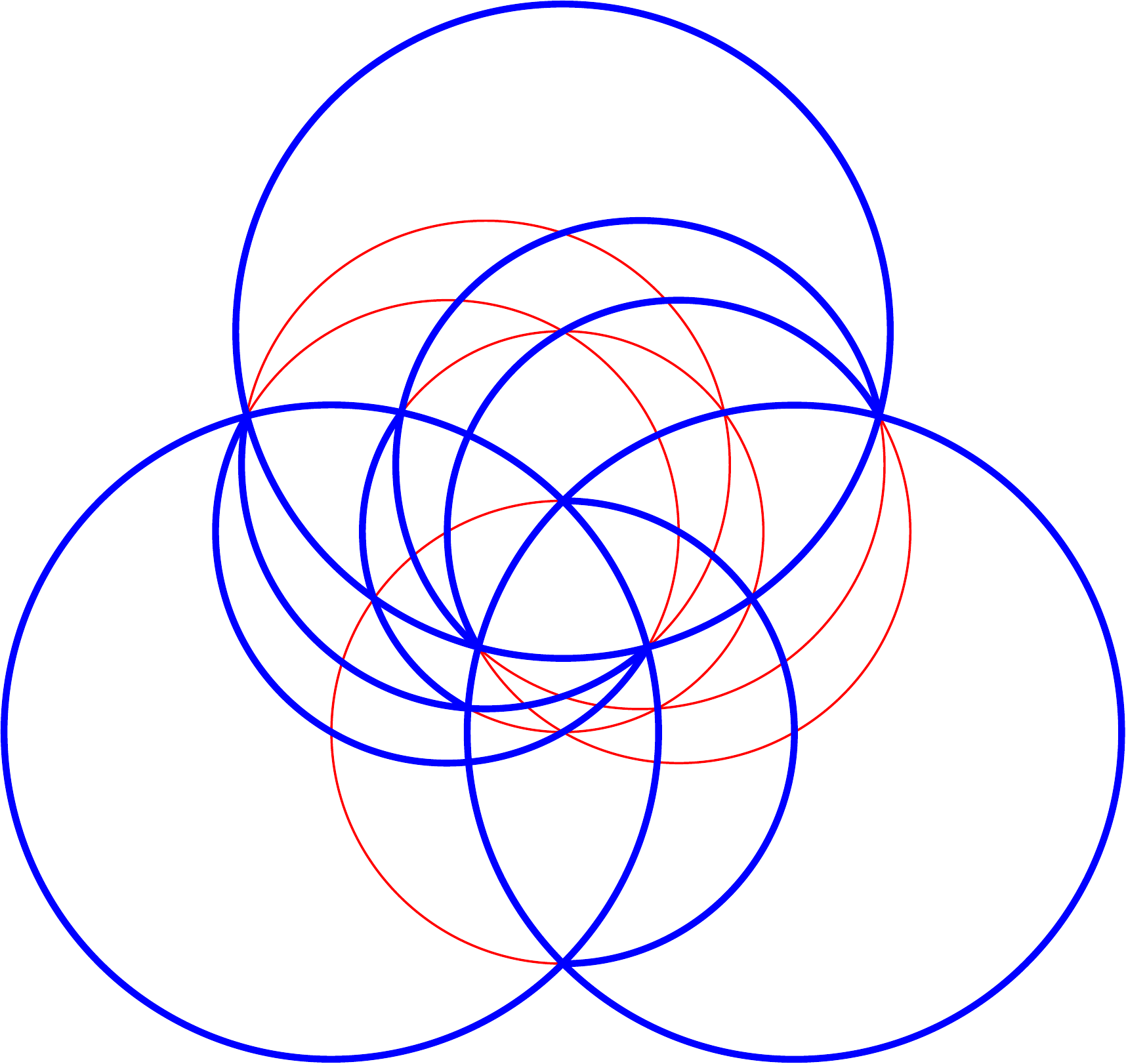}}
  \caption{The dual $\b{c}$-vector fan~$\cvectorFan[\B_\circ^\vee]$ (thin red) and $\b{g}$-vector fan~$\gvectorFan$ (bold blue) for the type~$A_3$~(left) and type~$C_3$~(right) cyclic initial exchange matrices. Each $3$-dimensional fan is intersected with the unit sphere and stereographically projected to the plane from the pole~$(-1,-1,-1)$. We use this drawing convention for all $3$-dimensional fans in this paper.}
	\label{fig:gvectorFans}
\end{figure}
\end{example}


\section{Polytopality}
\label{sec:polytopality}

In this section, we show that the $\b{g}$-vector fan~$\gvectorFan$ is polytopal for any finite type exchange matrix~$\B_\circ$.
As discussed in Section~\ref{subsec:acyclic}, this result was previously known for acyclic finite type~exchange matrices~\cite{HohlwegLangeThomas, Stella, PilaudStump-brickPolytope}.
The proof of this paper, although similar in spirit to that of~\cite{Stella}, actually simplifies the previous approaches.

We first consider some convenient functions which will be used later in Theorem~\ref{thm:polytopality} to lift the $\b{g}$-vector fan.
The existence of such functions will be discussed in Proposition~\ref{prop:existenceExchangeSubmodularFunctions}.

\begin{definition}
\label{def:exchangeSubmodularFunction}
A positive function~$\F$ on the cluster variables of~$\clusterAlgebra$ is \defn{exchange submodular}\footnote{The term ``exchange submodular'' is inspired from a particular situation in type~$A$ (namely, when~$\B_\circ$ has $1$'s on the upper subdiagonal and $-1$'s on the lower subdiagonal), where such functions really correspond to the classical submodular functions, \ie functions~$\F : 2^{[n]} \to \R$ such that~$\F(X) + \F(Y) > \F(X \cup Y) + \F(X \cap Y)$ for any distinct non-trivial subsets~$X,Y$ of~$[n]$.} if, for any pair of adjacent seeds~$(\B, \coefficients, \cluster)$ and~$(\B', \coefficients', \cluster')$ with~$\cluster \ssm \{x\} = \cluster' \ssm \{x'\}$, it satisfies
\[
\F(x) + \F(x') > \max \Big( \sum_{\substack{y \in \cluster \cap \cluster' \\ b_{xy} < 0}} -b_{xy} \, \F(y) \;,  \sum_{\substack{y \in \cluster \cap \cluster' \\ b_{xy} > 0}} b_{xy} \, \F(y) \Big).
\]
\end{definition}

\begin{definition}
\label{def:pointHyperplane}
Let~$\F$ be a positive function on the cluster variables of~$\principalClusterAlgebra$ we define:
\begin{enumerate}[(i)]
\item a point
\[
\bigpoint{\B_\circ}{\seed}{\F} \eqdef \sum_{x \in \seed} \F(x) \, \bigcvector{\B_\circ^\vee}{\seed^\vee}{x^\vee} \; \in V^\vee
\]
for each seed~$\seed$ of~$\principalClusterAlgebra$,
\item a halfspace~$\HS{\B_\circ}{x}{\F}$ and a hyperplane~$\Hyp{\B_\circ}{x}{\F}$ by
\begin{align*}
&
\bigHS{\B_\circ}{x}{\F} \eqdef \bigset{\b{v} \in V^\vee}{\dotprod{\biggvector{\B_\circ}{x}}{\b{v}} \le \F(x)} \\
\qquad\text{and}\qquad &
\bigHyp{\B_\circ}{x}{\F} \eqdef \bigset{\b{v} \in V^\vee}{\dotprod{\biggvector{\B_\circ}{x}}{\b{v}} = \F(x)}
\end{align*}
for each cluster variable~$x$ of~$\principalClusterAlgebra$.
\end{enumerate}
\end{definition}

The following statement is the central result of this paper.
We refer again to Proposition~\ref{prop:existenceExchangeSubmodularFunctions} for the existence of exchange submodular functions.

\begin{theorem}
\label{thm:polytopality}
For any finite type~exchange matrix~$\B_\circ$ and any exchange submodular function~$\F$, the $\b{g}$-vector fan~$\gvectorFan$ is the normal fan of the \defn{$\B_\circ$-associahedron}~$\Asso{\F} \subseteq V^\vee$ equivalently defined as
\begin{enumerate}[(i)]
\item the convex hull of the points~$\point{\B_\circ}{\seed}{\F}$ for all seeds~$\seed$ of~$\principalClusterAlgebra$, or
\item the intersection of the halfspaces~$\HS{\B_\circ}{x}{\F}$ for all cluster variables~$x$ of~$\principalClusterAlgebra$.
\end{enumerate}
\end{theorem}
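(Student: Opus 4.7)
The plan is to apply the general polytopality criterion of Proposition~\ref{prop:characterizationPolytopalFan} with the rays of the fan being the $\b{g}$-vectors $\{\gvector{\B_\circ}{x}\}$ indexed by the cluster variables of $\principalClusterAlgebra$, and the positive function being the exchange submodular function~$\F$. By Theorem~\ref{thm:gvectorFan}, we already know that $\gvectorFan$ is a complete simplicial fan supported on the cluster complex (which is a pseudomanifold), so condition~(1) of Proposition~\ref{prop:characterizationPolytopalFan} is automatic and only the numerical condition~(2) on adjacent maximal cones needs to be checked.

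To verify condition~(2), I would fix two adjacent seeds $(\B,\coefficients,\cluster)$ and $(\B',\coefficients',\cluster')$ of $\principalClusterAlgebra$ with $\cluster\ssm\{x\}=\cluster'\ssm\{x'\}$, and read the linear dependence from Lemma~\ref{lem:linearDependence}. In both alternatives
\[
\gvector{\B_\circ}{x}+\gvector{\B_\circ}{x'}-\sum_{\substack{y\in\cluster\cap\cluster'\\b_{xy}<0}}(-b_{xy})\,\gvector{\B_\circ}{y}=0
\qquad\text{or}\qquad
\gvector{\B_\circ}{x}+\gvector{\B_\circ}{x'}-\sum_{\substack{y\in\cluster\cap\cluster'\\b_{xy}>0}}b_{xy}\,\gvector{\B_\circ}{y}=0,
\]
the leading coefficients are $\gamma=\gamma'=1$. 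The inequality of Proposition~\ref{prop:characterizationPolytopalFan}\,(2) thus reduces, depending on the alternative, to one of
\[
\F(x)+\F(x')>\sum_{\substack{y\in\cluster\cap\cluster'\\b_{xy}<0}}(-b_{xy})\,\F(y)
\qquad\text{or}\qquad
\F(x)+\F(x')>\sum_{\substack{y\in\cluster\cap\cluster'\\b_{xy}>0}}b_{xy}\,\F(y),
\]
and both of these are granted at once by the exchange submodularity of~$\F$ (since it bounds~$\F(x)+\F(x')$ below by the maximum of the two sums). This yields condition~(2) of Proposition~\ref{prop:characterizationPolytopalFan}, so $\gvectorFan$ is the normal fan of the polytope described in~(ii), namely
\[
P\eqdef\bigset{\b{v}\in V^\vee}{\dotprod{\gvector{\B_\circ}{x}}{\b{v}}\le\F(x)\text{ for every cluster variable }x\text{ of }\principalClusterAlgebra}.
\]

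It only remains to identify the vertex of $P$ corresponding to each maximal cone $\R_{\ge0}\,\gvectors{\B_\circ}{\seed}$, which is the unique intersection of the facet hyperplanes $\{\Hyp{\B_\circ}{x}{\F}\}_{x\in\seed}$. By Theorem~\ref{thm:duality}, the families $\{\gvector{\B_\circ}{x}\}_{x\in\seed}$ and $\{\cvector{\B_\circ^\vee}{\seed^\vee}{x^\vee}\}_{x\in\seed}$ are dual bases of $V$ and $V^\vee$, so the candidate point
\[
\point{\B_\circ}{\seed}{\F}=\sum_{x\in\seed}\F(x)\,\cvector{\B_\circ^\vee}{\seed^\vee}{x^\vee}
\]
satisfies $\bigdotprod{\gvector{\B_\circ}{y}}{\point{\B_\circ}{\seed}{\F}}=\F(y)$ for every $y\in\seed$ by the duality relation. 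Hence $\point{\B_\circ}{\seed}{\F}$ is precisely the vertex of $P$ selected by the maximal cone of $\seed$, and the convex hull of these vertices over all seeds of $\principalClusterAlgebra$ reproduces description~(i).

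The whole argument is therefore a clean assembly: the combinatorics of the linear dependences (Lemma~\ref{lem:linearDependence}) feeds directly into the polytopality criterion, while the $\b{g}$/$\b{c}$-duality (Theorem~\ref{thm:duality}) converts the facet description into the vertex description. The genuine difficulty does not lie in this theorem itself but in establishing that exchange submodular functions exist at all for arbitrary finite type initial exchange matrices (including the cyclic ones), which is the content of the forthcoming Proposition~\ref{prop:existenceExchangeSubmodularFunctions}; once such an $\F$ is in hand, the polytopality of $\gvectorFan$ follows almost formally.
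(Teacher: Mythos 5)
Your proposal is correct and follows essentially the same route as the paper: Lemma~\ref{lem:linearDependence} gives the linear dependence with coefficients $\gamma=\gamma'=1$, exchange submodularity of~$\F$ supplies the strict inequality required by Proposition~\ref{prop:characterizationPolytopalFan}, and the $\b{g}$/$\b{c}$-duality of Theorem~\ref{thm:duality} identifies $\point{\B_\circ}{\seed}{\F}$ as the vertex on the hyperplanes $\Hyp{\B_\circ}{x}{\F}$ for $x\in\seed$, yielding the convex-hull description. The only cosmetic slip is referring to a ``condition~(1)'' of Proposition~\ref{prop:characterizationPolytopalFan}: what you mean (and correctly use) is that its hypothesis that the cones form a complete simplicial fan is already guaranteed by Theorem~\ref{thm:gvectorFan}.
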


\begin{proof}
Consider two adjacent seeds~$\seed = (\B, \coefficients, \cluster)$ and~$\seed' = (\B', \coefficients', \cluster')$ with~$\cluster \ssm \{x\} = \cluster' \ssm \{x'\}$.
By Lemma~\ref{lem:linearDependence}, the linear dependence between the $\b{g}$-vectors of~$\cluster \cup \cluster'$ is of the form
\[
\biggvector{\B_\circ}{x} + \biggvector{\B_\circ}{x'} = \sum_{\substack{y \in \cluster \cap \cluster' \\ \signDep{\B_\circ}{\seed}{\seed'} \, b_{xy} > 0}} \bigsignDep{\B_\circ}{\seed}{\seed'} \, b_{xy} \, \biggvector{\B_\circ}{y}
\]
for some~$\signDep{\B_\circ}{\seed}{\seed'} \in \{\pm 1\}$ (depending on the initial exchange matrix~$\B_\circ$).
However, by Definition~\ref{def:exchangeSubmodularFunction}, the function~$\F$ satisfies
\[
\F(x) + \F(x') > \max \Big( \sum_{\substack{y \in \cluster \cap \cluster' \\ b_{xy} < 0}} -b_{xy} \, \F(y),  \sum_{\substack{y \in \cluster \cap \cluster' \\ b_{xy} > 0}} b_{xy} \, \F(y) \Big) \ge \hspace{-.5cm} \sum_{\substack{y \in \cluster \cap \cluster' \\ \signDep{\B_\circ}{\seed}{\seed'} \, b_{xy} > 0}} \hspace{-.5cm} \bigsignDep{\B_\circ}{\seed}{\seed'} \, b_{xy}  \, \F(y).
\]
Applying the characterization of Proposition~\ref{prop:characterizationPolytopalFan}, we thus immediately obtain that the $\b{g}$-vector fan~$\gvectorFan$ is the normal fan of the polytope defined as the intersection of the halfspaces~$\HS{\B_\circ}{x}{\F}$ for all cluster variables~$x$ of~$\principalClusterAlgebra$.

Finally, to show the vertex description, we just need to observe that, for any seed~$\seed$ of~$\principalClusterAlgebra$, the point~$\point{\B_\circ}{\seed}{\F}$ is the intersection of the hyperplanes~$\Hyp{\B_\circ}{x}{\F}$ for all~$x \in \seed$.
Indeed, since~$\gvectors{\B_\circ}{\seed}$ and~$\cvectors{\B_\circ^\vee}{\seed^\vee}$ form dual bases by Theorem~\ref{thm:duality}, we have for any~$x \in \seed$:
\[
\dotprod{\biggvector{\B_\circ}{x}}{\bigpoint{\B_\circ}{\seed}{\F}} = \sum_{y \in \seed} \F(y) \, \dotprod{\biggvector{\B_\circ}{x}}{\bigcvector{\B_\circ^\vee}{\seed^\vee}{y^\vee}} = \sum_{y \in \seed} \F(y) \, \delta_{x = y} \; = \; \F(x).
\qedhere
\]
\end{proof}

\begin{remark}
\label{rem:differencePoints}
By definition $\Asso{\F}$ fulfills the following properties:
\begin{itemize}
\item the normal vectors are the~$\b{g}$-vectors of all cluster variables of~$\principalClusterAlgebra$,
\item for any two adjacent seeds~$\seed = (\B, \coefficients, \cluster)$ and~$\seed' = (\B', \coefficients', \cluster')$  with~$\cluster \ssm \{x\} = \cluster' \ssm \{x'\}$, the edge of $\Asso{\F}$ joining the vertex~$\point{\B_\circ}{\seed}{\F}$ to the vertex~$\point{\B_\circ}{\seed'}{\F}$ is a negative multiple of the dual $\b{c}$-vector~$\cvector{\B_\circ^\vee}{\seed^\vee}{x^\vee} = -\cvector{\B_\circ^\vee}{\seed'^\vee}{x'^\vee}$.
More precisely,
\[
\bigpoint{\B_\circ}{\seed'}{\F} - \bigpoint{\B_\circ}{\seed}{\F} = \Big(\sum_y b_{xy} \, \F(y) - h(x') - h(x) \Big) \, \bigcvector{\B_\circ^\vee}{\seed^\vee}{x^\vee},
\]
where the sum runs over the variables~$y \in \seed \cap \seed'$ such that~$b_{xy}$ has the same sign as the $\b{c}$-vector~$\cvector{\B_\circ^\vee}{\seed^\vee}{x^\vee}$.
Note that~$\sum_y b_{xy} \, \F(y) - h(x') - h(x) < 0$ since~$\F$ is exchange submodular.
\end{itemize}
\end{remark}

Our next step is to show the existence of exchange submodular functions for any finite type cluster algebra with principal coefficients.
The important observation here is that the definition of exchange submodular function does not involve in any way the coefficients of $\principalClusterAlgebra$ so that it suffices to construct one in the coefficient free cases.
Indeed, if $\F$ is exchange submodular for $\coefficientFreeClusterAlgebra$, and $\eta$ is the coefficient specialization morphism given by
\[
  \begin{array}{rccc}
    \eta: & \principalClusterAlgebra & \longrightarrow & \coefficientFreeClusterAlgebra\\
          & p_i                      & \longmapsto     & 1
  \end{array}
\]
one gets the desired map by setting
\[
  \F(x) \eqdef \F(\eta(x))
\]
for any cluster variable $x$ of $\principalClusterAlgebra$.

Recall that, up to an obvious automorphism of the ambient field, there exists a unique cluster algebra without coefficients for each given finite type~\cite{FominZelevinsky-ClusterAlgebrasIV}.
We can therefore, without loss of generality, assume that $\B_\circ$ is bipartite and directly deduce our result from \cite[Prop.~8.3]{Stella} obtained as an easy consequence of \cite[Lem.~2.4]{ChapotonFominZelevinsky} which we recast here in our current setting.

When~$\B_\circ$ is acyclic, the Weyl group of~$\A{\B_\circ}$ is finite and has a longest element~$\wo$.
A point~$\lambda^\vee \eqdef \sum_{x \in \cluster_\circ} \lambda^\vee_x \, \fundamentalWeight^\vee_x$ in the interior of the fundamental Weyl chamber of~$\A{\B^\vee_\circ}$ (that is to say $\lambda^\vee_x > 0$ for all $x \in \cluster_\circ$) is \defn{fairly balanced} if~$\wo(\lambda^\vee) = -\lambda^\vee$.

\begin{proposition}
\label{prop:existenceExchangeSubmodularFunctions}
Let $\coefficientFreeClusterAlgebra$ be any finite type cluster algebra without coefficients and assume that $\B_\circ$ is bipartite.
To each fairly balanced point~$\lambda^\vee$ corresponds an exchange submodular function $\F_{\lambda^\vee}$ on $\coefficientFreeClusterAlgebra$.
\end{proposition}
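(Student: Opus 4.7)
The plan is to reduce the statement to the bipartite coefficient-free setting and then invoke the existing submodularity results of \cite[Lem.~2.4]{ChapotonFominZelevinsky} and \cite[Prop.~8.3]{Stella}.

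First, I would observe that exchange submodularity, as phrased in Definition~\ref{def:exchangeSubmodularFunction}, depends only on the exchange matrices encountered along the mutation graph, not on the coefficient tuples. Consequently, any exchange submodular function on $\coefficientFreeClusterAlgebra$ transfers along the canonical identification of cluster complexes (and hence of cluster variables) to an exchange submodular function on any cluster algebra of the same type. Since every finite type mutation class contains a bipartite representative, and since the coefficient-free cluster algebra in each type is unique up to an automorphism of the ambient field, this reduces the problem to producing $\F_{\lambda^\vee}$ on $\coefficientFreeClusterAlgebra$ when $\B_\circ$ is bipartite.

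Second, once $\B_\circ$ is assumed bipartite, I would define $\F_{\lambda^\vee}$ explicitly by pairing $\lambda^\vee$ against a suitable lift to the root lattice of $\A{\B_\circ}$ of each cluster variable; in the bipartite case this lift is provided by the bijection between cluster variables and almost positive roots of \cite{FominZelevinsky-YSystems}, leading precisely to the function considered in \cite[Prop.~8.3]{Stella}. Positivity of $\F_{\lambda^\vee}$ is then immediate from the fact that $\lambda^\vee$ lies in the interior of the fundamental Weyl chamber for~$\A{\B_\circ^\vee}$, while the fair balance condition~$\wo(\lambda^\vee) = -\lambda^\vee$ symmetrizes the treatment of positive and negative simple roots so that both cases of the exchange appearing in Lemma~\ref{lem:linearDependence} can be handled uniformly.

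The hard part, and the true content of the proposition, is verifying the strict submodular inequality of Definition~\ref{def:exchangeSubmodularFunction} for $\F_{\lambda^\vee}$. This is precisely what \cite[Lem.~2.4]{ChapotonFominZelevinsky} establishes for any exchange relation in a bipartite finite type cluster algebra and any fairly balanced $\lambda^\vee$. The only remaining issue is to translate the statement in loc.~cit., which is formulated in terms of the compatibility degrees and denominator vectors, into the $\b{g}$-vector formulation of Definition~\ref{def:exchangeSubmodularFunction}; this translation, in the bipartite setting, is carried out in \cite[Prop.~8.3]{Stella}. Combining these two results with the reductions above completes the plan.
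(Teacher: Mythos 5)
Your overall route is the paper's: work in the bipartite, coefficient-free situation and delegate the strict inequality of Definition~\ref{def:exchangeSubmodularFunction} to \cite[Lem.~2.4]{ChapotonFominZelevinsky} as recast by \cite{Stella}, the only translation needed being between the denominator vectors appearing there and the exchange monomials used here; your preliminary reduction (exchange submodularity ignores coefficients, so it suffices to treat $\coefficientFreeClusterAlgebra$ with $\B_\circ$ bipartite) is exactly how the paper sets up the statement before the proposition.

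The weak point is your explicit description of the function. Taking $\F_{\lambda^\vee}(x)$ to be the pairing of $\lambda^\vee$ with the almost positive root attached to $x$ does not produce the function of \cite[Prop.~8.3]{Stella}: it is negative on the initial cluster variables (they correspond to the negative simple roots), and it is not constant on $\langle\tau_+,\tau_-\rangle$-orbits, whereas \cite[Lem.~2.4]{ChapotonFominZelevinsky} is a statement about $\langle\tau_+,\tau_-\rangle$-invariant functions; so your positivity claim is not ``immediate'' and the identification with Stella's function fails as stated. The correct definition, which is the content of the paper's proof, is that $\F_{\lambda^\vee}$ is constant on $\langle\tau_+,\tau_-\rangle$-orbits, its value on the orbit of an initial cluster variable $x$ being the $x$-th coordinate of $\lambda^\vee$ in the simple coroot basis (positivity then follows because a point interior to the fundamental chamber has positive coroot coordinates). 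The fairly balanced hypothesis enters precisely to make this prescription consistent when two initial variables lie in a common orbit, i.e.\ when $\wo$ sends one simple root to the negative of another; it is not there to ``handle both cases'' of Lemma~\ref{lem:linearDependence} --- the submodularity inequality must dominate the maximum of the two sums regardless. With the function defined this way, the remainder of your argument (CFZ's lemma plus the denominator-vector translation carried out in \cite{Stella}) coincides with the paper's proof.
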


\begin{proof}
  To define $\F_{\lambda^\vee}$, recall from the construction in \cite{FominZelevinsky-YSystems, FominZelevinsky-ClusterAlgebrasII} that the set of cluster variables in $\coefficientFreeClusterAlgebra$ is acted upon by a dihedral group generated by the symbols $\tau_+$ and $\tau_-$.

  Each $\langle\tau_+,\tau_-\rangle$-orbit of cluster variables meets the initial seed in either 1 or 2 elements.
  We will define $\F_{\lambda^\vee}$ to be constant on the orbits of this action: on any element in the orbit of the initial cluster variable $x$ the function $\F_{\lambda^\vee}$ evaluates to the $x$-th coordinate of $\lambda^\vee$ when written in the basis of simple coroots.
  The requirement that $\lambda^\vee$ is fairly balanced, as explained in the proof of \cite[Thm.~6]{Stella}, is tantamount to say that $\F_{\lambda^\vee}(x)=\F_{\lambda^\vee}(y)$ if $x$ and $y$ are initial cluster variables in the same $\langle\tau_+,\tau_-\rangle$-orbit.

  The fact that the function $\F_{\lambda^\vee}$ defined in this way is exchange submodular is then the content of \cite[Lem.~2.4]{ChapotonFominZelevinsky} together with the computation following \cite[Prop.~8]{Stella}.
  The only minor thing to observe is that, there, this function appears as a piecewise linear function on the ambient space of the root lattice and thus, instead of writing the cluster expansions of the two monomials in the right hand side of the exchange relations as we do here, their total denominator vectors appear.
\end{proof}

\begin{remark}
  Reading through \cite{Stella} one may have the impression, as the author did at the time, that for a given $\lambda^\vee$ a different exchange submodular function has been constructed for each choice of acyclic initial seed.
  This is not the case.
  Indeed, if one unravels the definitions, it is easy to see that the functions defined there only differ because the set of $\b{g}$-vectors they use as domain are different.
  Taken as function on the cluster variables in the respective cluster algebras with principal coefficients, under the specialization maps $\eta$, they all correspond to the same function on $\coefficientFreeClusterAlgebra$.
\end{remark}
 
A particular example of fairly balanced point is the point
\[
\rho^\vee \eqdef \sum_{x \in \cluster_\circ} \fundamentalWeight^\vee_x.
\]
Note that~$\rho^\vee$ is both the sum of the fundamental coweights and the half sum of all positive coroots of the root system of finite type~$\A{\B_\circ}$.
In particular~$\F_{\rho^\vee}$ is the \defn{half compatibility sum} of~$x$, \ie the half sum of the compatibility degrees
\[
\F_{\rho^\vee}(x) \eqdef \frac{1}{2} \sum_{y \ne x} \compatibilityDegree{y}{x}
\]
over all cluster variables distinct from~$x$.
(See \cite{FominZelevinsky-YSystems, CeballosPilaud} for the definition and discussion of the relevant properties of compatibility degrees.)
The point $\rho^\vee$ is particularly relevant in representation theory and its role in this context has already been observed in \cite[Rem.~1.6]{ChapotonFominZelevinsky}.
We call \defn{balanced $\B_\circ$-associahedron} and denote by~$\Asso{}$ the~$\B_\circ$-associahedron~$\Asso{\F_{\rho^\vee}}$ for the exchange submodular function~$\F_{\rho^\vee}$.

\begin{example}
When~$\B_\circ$ is acyclic, the $\B_\circ$-associahedron~$\Asso{}$ was already constructed in \cite{HohlwegLangeThomas, Stella, PilaudStump-brickPolytope}.
It is then obtained by deleting inequalities from the facet description of the permutahedron of the Coxeter group of type~$\A{\B_\circ}$.
Section~\ref{subsec:acyclic} provides a detailed discussion of the acyclic case.
\end{example}

\begin{example}
\label{exm:specific2}
Following Example~\ref{exm:specific1}, we have represented in \fref{fig:associahedra} the $\B_\circ$-associahedra $\Asso{}$ for the same two initial exchange matrices
\\[.3cm]
\centerline{
\begin{tabular}{c@{\qquad}c@{\qquad}c}
	$\begin{bmatrix} 0 & -1 & 1 \\ 1 & 0 & -1 \\ -1 & 1 & 0 \end{bmatrix}$ & and & $\begin{bmatrix} 0 & -1 & 2 \\ 1 & 0 & -2 \\ -1 & 1 & 0 \end{bmatrix}$ \\[.6cm]
	(type~$A_3$ cyclic) & & (type~$C_3$ cyclic)
\end{tabular}
} \\[.2cm]
Note that the leftmost associahedron of \fref{fig:associahedra} appeared as a mysterious realization of the associahedron in~\cite{CeballosSantosZiegler}.

\begin{figure}
	\capstart
	\centerline{$\vcenter{\hbox{\includegraphics[scale=.65]{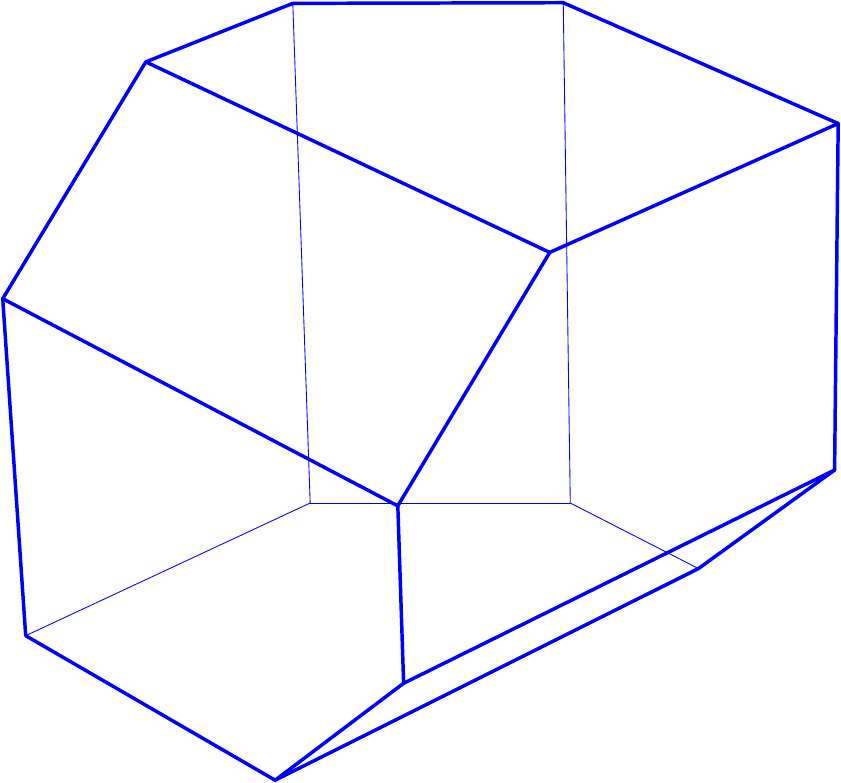}}}$ \qquad $\vcenter{\hbox{\includegraphics[scale=.7]{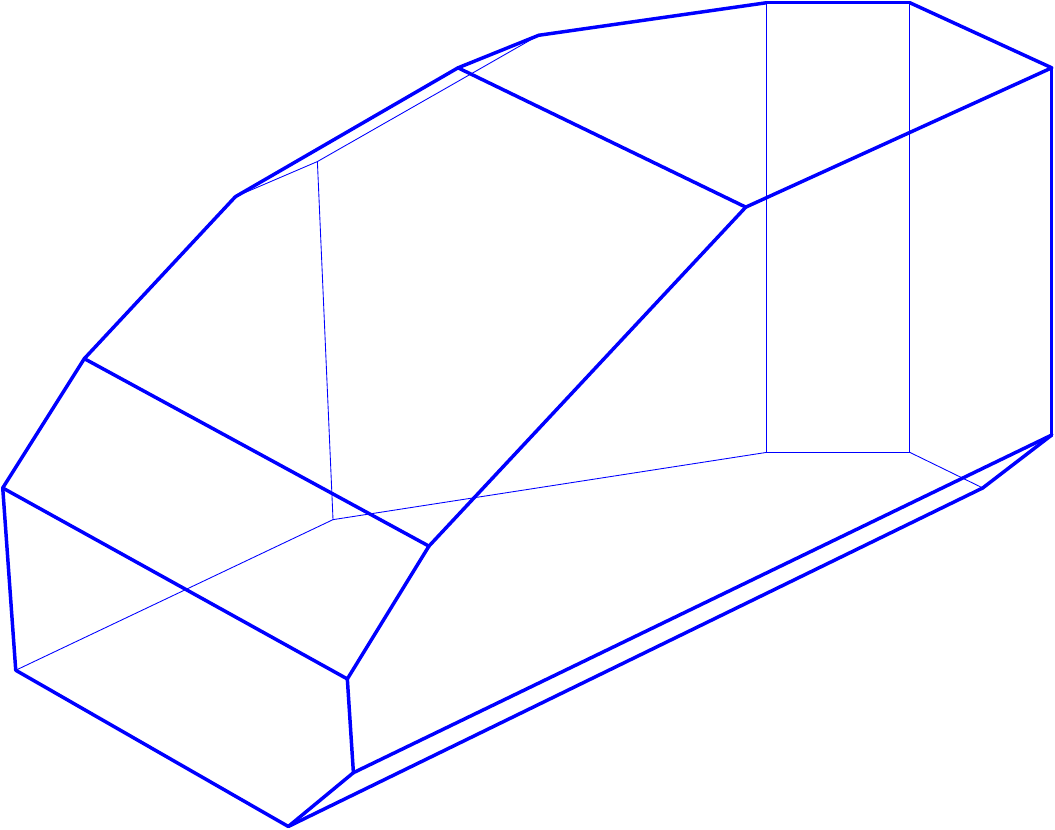}}}$}
	\caption{The associahedra~$\Asso{}$ for the type~$A_3$~(left) and type~$C_3$~(right) cyclic initial exchange matrices.}
	\label{fig:associahedra}
\end{figure}
\end{example}

We sum up by stating the main result of this paper.

\begin{corollary}
\label{coro:polytopalitygvectorFan}
For any finite type exchange matrix~$\B_\circ$, the $\b{g}$-vector fan~$\gvectorFan$ is polytopal.
\end{corollary}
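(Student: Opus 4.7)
The plan is to deduce the corollary by directly chaining together the two main results already proved earlier in Section~\ref{sec:polytopality}. Concretely, Theorem~\ref{thm:polytopality} asserts that as soon as one can exhibit an exchange submodular function $\F$ on the cluster variables of $\principalClusterAlgebra$, the $\b{g}$-vector fan $\gvectorFan$ is the normal fan of the polytope $\Asso{\F}$, so the entire task reduces to producing such an $\F$ in every finite type.

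First, I would use the fact that being exchange submodular (Definition~\ref{def:exchangeSubmodularFunction}) is a condition that refers only to the exchange matrices appearing in the seeds and to the cluster variables themselves, but never to the coefficients. So it suffices to build $\F$ on the coefficient-free cluster algebra $\coefficientFreeClusterAlgebra$: given any such $\F$, the formula $\F(x) \eqdef \F(\eta(x))$, where $\eta : \principalClusterAlgebra \to \coefficientFreeClusterAlgebra$ is the coefficient specialization sending every $p_i$ to $1$, defines an exchange submodular function on $\principalClusterAlgebra$ (the combinatorial inequalities transport through $\eta$ because $\eta$ respects the cluster complex and the exchange matrices along mutations).

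Next I would invoke the fact that, up to isomorphism of the ambient field, a coefficient-free cluster algebra of finite type depends only on the mutation class of~$\B_\circ$, so without loss of generality we may replace~$\B_\circ$ by a bipartite exchange matrix within its mutation class. At this point Proposition~\ref{prop:existenceExchangeSubmodularFunctions} applies: any fairly balanced point $\lambda^\vee$ in the interior of the fundamental Weyl chamber produces an exchange submodular function $\F_{\lambda^\vee}$ on $\coefficientFreeClusterAlgebra$; taking $\lambda^\vee = \rho^\vee$ gives the canonical choice $\F_{\rho^\vee}$.

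Finally, pulling $\F_{\rho^\vee}$ back through $\eta$ to $\principalClusterAlgebra$ and feeding it into Theorem~\ref{thm:polytopality} yields that $\gvectorFan$ is the normal fan of the balanced $\B_\circ$-associahedron $\Asso{}$, which in particular means $\gvectorFan$ is polytopal. I expect no real obstacle here: the serious work has already been done in Theorem~\ref{thm:polytopality} (the submodularity-to-polytopality implication via Proposition~\ref{prop:characterizationPolytopalFan}) and in Proposition~\ref{prop:existenceExchangeSubmodularFunctions} (existence of $\F_{\rho^\vee}$ in the bipartite coefficient-free case). The only point requiring a small verification is the transport of the submodularity inequality along the coefficient specialization~$\eta$ and the reduction from arbitrary $\B_\circ$ in a mutation class to a bipartite representative; both are formal consequences of the fact that $\eta$ preserves exchange matrices and of the uniqueness of the coefficient-free algebra attached to a finite type.
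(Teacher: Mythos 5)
Your proposal is correct and follows exactly the paper's route: the corollary is obtained by combining Theorem~\ref{thm:polytopality} with the reduction of exchange submodularity to the coefficient-free (and then bipartite) case and the existence result of Proposition~\ref{prop:existenceExchangeSubmodularFunctions}, e.g.\ with $\F_{\rho^\vee}$. Nothing further is needed.
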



\section{Two families of examples}
\label{sec:specificFamilies}

Before investigating more combinatorial and geometric properties of the $\B_\circ$-associahedron $\Asso{}$, we take a moment to study two specific families of examples, corresponding to initial exchange matrices that are either acyclic (Section~\ref{subsec:acyclic}), or of type~$A$ (Section~\ref{subsec:typeA}).


\subsection{Acyclic case}
\label{subsec:acyclic}

\enlargethispage{.4cm}
We first consider an acyclic initial seed, \ie with exchange matrix~$\B_\circ$ whose Cartan companion~$\A{\B_\circ}$ is itself a Cartan matrix of finite type.
We denote by~$W(\B_\circ)$ the Weyl group of type~$\A{\B_\circ}$ and by~$\wo$ its longest element.
Note that the choice of an acyclic seed is equivalent to the choice of a Coxeter element~$c$ of~$W(\B_\circ)$.

We gather in the following list some relevant facts from the literature (some of which were already observed earlier in the text).
We refer to~\cite{Hohlweg} for a detailed survey on these properties.

\para{Roots and weights:}
\begin{itemize}
\item the cluster variables of~$\principalClusterAlgebra$ are in bijection with the almost positive roots of~$\A{\B_\circ}$,
\item the dual $\b{c}$-vectors of~$\principalClusterAlgebra$ are all the coroots of~$\A{\B_\circ}$,
\item the $\b{g}$-vectors of~$\principalClusterAlgebra$ are some weights of~$\A{\B_\circ}$.
\end{itemize}

\para{Cambrian fan and Coxeter fan:}
\begin{itemize}
\item the dual $\b{c}$-vector fan~$\cvectorFan$ coincides with the dual Coxeter fan~$\CoxeterFan$,
\item the $\b{g}$-vector fan~$\gvectorFan$ coincides with the $c$-Cambrian fan of N.~Reading and D.~Speyer~\cite{ReadingSpeyer}.
\end{itemize}

\para{HLT associahedron and permutahedron:}
Consider any fairly balanced point~$\lambda^\vee \eqdef \sum_{x \in \cluster_\circ} \lambda^\vee_x \fundamentalWeight^\vee_x$ in the fundamental chamber of~$\A{\B^\vee_\circ}$.
Then the $\B_\circ$-associahedron~$\Asso{\F_{\lambda^\vee}\!}$ coincides with the $c$-associahedron~$\Asso[c]{\lambda^\vee\!\!}$ constructed by C.~Hohlweg, C.~Lange and H.~Thomas~\cite{HohlwegLangeThomas} and later revisited by S.~Stella~\cite{Stella} and by V.~Pilaud and C.~Stump~\cite{PilaudStump-brickPolytope} in the context of brick polytopes.
In particular, $\Asso[c]{\lambda^\vee\!\!}$ is defined by the inequalities normal to the $\b{g}$-vectors of~$\principalClusterAlgebra$ in the facet description of the \defn{$\B_\circ$-permutahedron}
\begin{align*}
\Perm{\lambda^\vee\!\!}
& \eqdef \conv \set{w(\lambda^\vee)}{w \in W(\B_\circ)} \\
& = \bigset{\b{v} \in V^\vee}{\dotprod{w(\omega_x)}{\b{v}} \le \lambda^\vee_x \text{ for all } x \in \cluster_\circ, w \in W(\B_\circ)}.
\end{align*}
See~\cite{Hohlweg} for more details on the relation between~$\Perm{\lambda^\vee\!\!}$ and~$\Asso[c]{\lambda^\vee\!\!}$.

\para{Cambrian lattice and weak order:}
When oriented in the direction~$-\sum_{x \in \cluster_\circ} \fundamentalWeight_x$,
\begin{itemize}
\item the graph of the $\B_\circ$-permutahedron is the Hasse diagram of the weak order~on~$W(\B_\circ)$,
\item the graph of the $\B_\circ$-associahedron is the Hasse diagram of the $c$-Cambrian lattice of N.~Reading~\cite{Reading-CambrianLattices}, which is a lattice quotient and a sublattice of the weak order on~$W(\B_\circ)$.
\end{itemize}

\para{Vertex barycenter:}
\enlargethispage{.3cm}
For any fairly balanced point~$\lambda^\vee$, the origin is the vertex barycenter of both the $\B_\circ$-permutahedron~$\Perm{\lambda^\vee\!\!}$ and the $\B_\circ$-associahedron~$\Asso{\lambda^\vee\!\!}$:
\[
\sum_{w \in W(\B_\circ)} w(\lambda^\vee) = \sum_{\seed} \bigpoint{\B_\circ}{\seed}{\lambda^\vee\!\!} = \b{0}.
\]
In type~$A$, this property was observed by F.~Chapoton for J.-L.~Loday's realization of the classical associahedron~\cite{Loday} and conjectured for arbitrary Coxeter element by C.~Hohlweg and C.~Lange in~\cite{HohlwegLange} in the balanced case.
It was later proved by C.~Hohlweg, J.~Lortie and A.~Raymond~\cite{HohlwegLortieRaymond} and revisited by C.~Lange and V.~Pilaud in~\cite{LangePilaud}.
Both proofs use an orbit refinement of this property.
For arbitrary finite types, it was conjectured by C.~Hohlweg, C.~Lange and H.~Thomas in~\cite{HohlwegLangeThomas} and proved by V.~Pilaud and C.~Stump using the brick polytope approach~\cite{PilaudStump-barycenter}.

\smallskip
We will see in Section~\ref{sec:properties} how these properties of the $\B_\circ$-associahedron~$\Asso{\F_{\lambda^\vee}\!}$ for finite type acyclic initial exchange matrices~$\B_\circ$ extend to arbitrary initial exchange matrices.


\subsection{Type~$A$}
\label{subsec:typeA}

We now consider an initial exchange matrix of type~$A$.
It is well known that the vertices of the type~$A$ associahedron are counted by the Catalan numbers and are therefore in bijection with all Catalan families.
We use in this paper the classical model by triangulations of a convex polygon~\cite{FominZelevinsky-YSystems}, that we briefly recall now.

\para{Triangulation model for type~$A$ cluster algebras.}
Consider a convex $(n+3)$-gon~$\polygon_n$.
A \defn{dissection} is a set of pairwise non-crossing internal diagonals of~$\polygon_n$, and a \defn{triangulation} is a maximal dissection (thus decomposing~$\polygon_n$ into triangles).
A triangulation~$\triangulation$ defines a matrix~$\B(\triangulation) = (b_{\gamma\delta})$ whose rows and columns are indexed by the diagonals of~$\triangulation$ and where~\[
b_{\gamma\delta} = 
\begin{cases}
	1 & \text{if~$\gamma$ follows~$\delta$ in counter-clockwise order around a triangle of~$\triangulation$,} \\
	-1 & \text{if~$\gamma$ precedes~$\delta$ in counter-clockwise order around a triangle of~$\triangulation$,} \\
	0 & \text{otherwise.}
\end{cases}
\]

A \defn{flip} in a triangulation~$\triangulation$ consists in exchanging an internal diagonal~$\gamma$ by the other diagonal~$\gamma'$ of the quadrilateral formed by the two triangles of~$\triangulation$ containing it.
The reader can observe that the exchange matrix~$\B(\triangulation')$ of the resulting triangulation~$\triangulation' = \triangulation \symdif \{\gamma, \gamma'\}$ is obtained from the exchange matrix~$\B(\triangulation)$ by a mutation in direction~$\gamma$ (as defined in Section~\ref{sec:prerequisitesCA}).

Moreover, note that if the triangulation~$\triangulation$ has no internal triangle, then we can order linearly its diagonals~$\gamma_1, \dots, \gamma_n$ such that~$b_{\gamma_i, \gamma_{i+1}} = \pm 1$ and~$b_{\gamma_i, \gamma_j} = 0$ if~$|i-j| \ne 1$, so that the Cartan companion~$\A{\B(\triangulation)}$ is precisely the type~$A$ Cartan matrix.
The reciprocal statement clearly holds as well: any exchange matrix whose Cartan companion is the type~$A$ Cartan matrix is the exchange matrix~$\B(\triangulation)$ of a triangulation~$\triangulation$ with no internal triangle.
Therefore, the flip graph on triangulations of~$\polygon_n$ completely encodes the combinatorics of mutations in type~$A$.

The choice of a type~$A$ initial~exchange matrix~$\B_\circ$ is thus equivalent to the choice of an initial triangulation~$\triangulation_\circ$ of~$\polygon_n$.
The cluster algebra~$\coefficientFreeClusterAlgebra[\B(\triangulation_\circ)]$ has a cluster variable~$x_\delta$ for each (internal) diagonal of the polygon~$\polygon_n$.
Recall that if~$\delta, \delta'$ are the two diagonals of a quadrilateral with edges~$\kappa, \lambda, \mu, \nu$ in cyclic order, then the corresponding cluster variables are related by the exchange relation~$x_\delta x_{\delta'} = x_\kappa x_\mu + x_\lambda x_\nu$.
(Here and elsewhere it is understood that if $\kappa$ is a boundary edge, then $x_\kappa=1$ and similarly for $\lambda$, $\mu$, and $\nu$.)

The compatibility degree in type~$A$ is very simple: for~$\gamma \ne \delta$, we have~$\compatibilityDegree{x_\gamma}{x_\delta} = 1$ if~$\gamma$ and~$\delta$ cross, and~$0$ otherwise.
In particular, the function~$\F_\rho$ is given by~$\F_\rho(x_\gamma) = i(n-2-i)/2$ for a diagonal~$\gamma$ with~$i$ vertices of~$\polygon_n$ on one side and~$n-2-i$ on the other side.

Finally, note that since the exchange matrix~$\B(\triangulation_\circ)$ is skew-symmetric, the algebra~$\principalClusterAlgebra[\B(\triangulation_\circ)]$ and its dual~$\principalClusterAlgebra[\B(\triangulation_\circ)^\vee]$ coincide.
We therefore take the freedom to omit mentioning duals in all this type~$A$ discussion.

\para{Shear coordinates for $\b{g}$- and $\b{c}$-vectors.}
We now provide a combinatorial interpretation for the $\b{g}$- and $\b{c}$-vectors in a type~$A$ cluster algebra in terms of the triangulation model.
Our presentation is a light version (for the special case of triangulations of a disk) of the shear coordinates of~\cite{FominShapiroThurston, FominThurston} developed to provide combinatorial models for cluster algebras from surfaces.

We consider $2n+6$ points on the unit circle labeled clockwise by \mbox{$1_\circ$, $2_\bullet$, \dots, $(2n+5)_\circ$, $(2n+6)_\bullet$}.
We say that~$1_\circ, 3_\circ, \dots, (2n+5)_\circ$ are the \defn{hollow vertices} and that~$2_\bullet, 4_\bullet, \dots, (2n+6)_\bullet$ are the \defn{solid vertices}.
We simultaneously consider \defn{hollow triangulations} (based on hollow vertices) and \defn{solid triangulations} (based on solid vertices), but never mix hollow and solid vertices in our triangulations.
To help distinguishing them, hollow vertices and diagonals appear red while solid vertices and diagonals appear blue in all pictures.
See \eg \fref{fig:exmTriangulations}.

Let~$\triangulation$ be a hollow (resp.~solid) triangulation, let~$\delta \in \triangulation$, and let~$\gamma$ be a solid (resp.~hollow) diagonal.
We denote by~$\quadrilateral(\delta \in \triangulation)$ the quadrilateral formed by the two triangles of~$\triangulation$ incident to~$\delta$.
When~$\gamma$ crosses~$\delta$, we define~$\sign{\delta}{\triangulation}{\gamma}$ to be $1$, $-1$, or~$0$ depending on whether~$\gamma$ crosses~$\quadrilateral(\delta \in \triangulation)$ as a~$\ZZZ$, as a~$\SSS$, or in a corner.
If~$\gamma$ and~$\delta$ do not cross, then we set~${\sign{\delta}{\triangulation}{\gamma} = 0}$.

\begin{figure}
	\centerline{\includegraphics[scale=1]{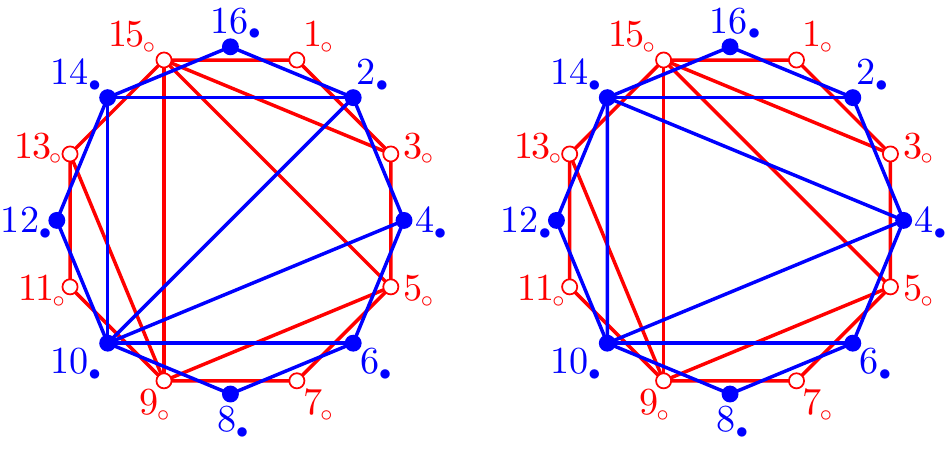}}
	\caption{Examples of hollow (red) and solid (blue) triangulations.}
	\label{fig:exmTriangulations}
\end{figure}

Fix once and for all a \defn{reference triangulation~$\triangulation_\circ$} of the hollow polygon and let~$(\fundamentalWeight_{\delta_\circ})_{\delta_\circ \in \triangulation_\circ}$ and~$(\simpleRoot_{\delta_\circ})_{\delta_\circ \in \triangulation_\circ}$ denote dual bases of~$\R^{\triangulation_\circ}$.
The reference triangulation~$\triangulation_\circ$ of the hollow polygon defines an initial triangulation~$\triangulation_\bullet^\mi \eqdef \set{(i-1)_\bullet (j-1)_\bullet}{(i_\circ, j_\circ) \in \triangulation_\circ}$ of the solid polygon, with~${\B(\triangulation_\circ) = \B(\triangulation_\bullet^\mi)}$.
The cluster algebra~$\principalClusterAlgebra[\B(\triangulation_\circ)]$ has one cluster variable~$x_{\delta_\bullet}$ for each solid internal diagonal~$\delta_\bullet$.
For a diagonal~$\delta_\bullet$ and a triangulation~$\triangulation_\bullet$ with~$\delta_\bullet \in \triangulation_\bullet$, we write~$\gvector{\triangulation_\circ}{\delta_\bullet}$ and~$\cvector{\triangulation_\circ}{\triangulation_\bullet}{\delta_\bullet}$ for the $\b{g}$- and $\b{c}$-vectors of the variable~$x_{\delta_\bullet}$ computed from the initial seed triangulation~$\triangulation_\bullet^\mi$.
We denote by~$\allcvectors{\triangulation_\circ}$ the set of all $\b{c}$-vectors of~$\principalClusterAlgebra[\B(\triangulation_\circ)]$.

The following statement provides a combinatorial interpretation of the $\b{g}$- and $\b{c}$-vectors.

\begin{proposition}
For any diagonal~$\delta_\bullet$ and a triangulation~$\triangulation_\bullet$ with~$\delta_\bullet \in \triangulation_\bullet$, we have
\[
\biggvector{\triangulation_\circ}{\delta_\bullet} \eqdef \sum_{\delta_\circ \in \triangulation_\circ} \bigsign{\delta_\circ}{\triangulation_\circ}{\delta_\bullet} \, \fundamentalWeight_{\delta_\circ}
\qquad\text{and}\qquad
\bigcvector{\triangulation_\circ}{\triangulation_\bullet}{\delta_\bullet} \eqdef \sum_{\delta_\circ \in \triangulation_\circ} -\bigsign{\delta_\bullet}{\triangulation_\bullet}{\delta_\circ} \, \simpleRoot_{\delta_\circ}.
\]
\end{proposition}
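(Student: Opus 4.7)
\medskip
\noindent \textbf{Proof plan.} I would prove the two formulas in parallel, by induction on the number of flips needed to pass from the reference initial triangulation~$\triangulation_\bullet^\mi$ to~$\triangulation_\bullet$.

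\textbf{Base case.} When $\triangulation_\bullet = \triangulation_\bullet^\mi$, the variable $x_{\delta_\bullet}$ with $\delta_\bullet = (i-1)_\bullet(j-1)_\bullet$ is an initial cluster variable, hence by definition of principal coefficients $\gvector{\triangulation_\circ}{\delta_\bullet} = \fundamentalWeight_{\delta_\circ}$ and $\cvector{\triangulation_\circ}{\triangulation_\bullet^\mi}{\delta_\bullet} = \simpleRoot_{\delta_\circ}$ where $\delta_\circ = i_\circ j_\circ$. I would check that the shear-coordinate formulas reproduce these values: since the solid chord $(i-1)_\bullet(j-1)_\bullet$ sits just clockwise of the hollow chord $i_\circ j_\circ$, it crosses $i_\circ j_\circ$ as a~$\ZZZ$ (contributing $+1$) and meets every other diagonal of~$\triangulation_\circ$ either not at all, or only at a corner of~$\quadrilateral(\delta_\circ' \in \triangulation_\circ)$ (contributing $0$). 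The symmetric inspection inside the quadrilateral $\quadrilateral(\delta_\bullet \in \triangulation_\bullet^\mi)$ yields the required $-1$ for the $\b{c}$-vector.

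\textbf{Inductive step for $\b{g}$-vectors.} Consider an adjacent triangulation $\triangulation'_\bullet = \triangulation_\bullet \symdif \{\delta_\bullet, \delta'_\bullet\}$ obtained by flipping $\delta_\bullet \leftrightarrow \delta'_\bullet$ in a quadrilateral with cyclic sides~$\eta_1, \eta_2, \eta_3, \eta_4$. By Lemma~\ref{lem:linearDependence}, the $\b{g}$-vectors satisfy one of
\[
\gvector{\triangulation_\circ}{\delta_\bullet} + \gvector{\triangulation_\circ}{\delta'_\bullet} = \gvector{\triangulation_\circ}{\eta_1} + \gvector{\triangulation_\circ}{\eta_3}
\quad \text{or} \quad
\gvector{\triangulation_\circ}{\delta_\bullet} + \gvector{\triangulation_\circ}{\delta'_\bullet} = \gvector{\triangulation_\circ}{\eta_2} + \gvector{\triangulation_\circ}{\eta_4},
\]
with the convention that $\gvector{\triangulation_\circ}{\eta} = 0$ when $\eta$ is a boundary edge. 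By induction hypothesis the right-hand side and $\gvector{\triangulation_\circ}{\delta_\bullet}$ are expressed via shear coordinates, so the induction reduces to the local identity
\[
\bigsign{\gamma_\circ}{\triangulation_\circ}{\delta_\bullet} + \bigsign{\gamma_\circ}{\triangulation_\circ}{\delta'_\bullet} = \bigsign{\gamma_\circ}{\triangulation_\circ}{\eta_1} + \bigsign{\gamma_\circ}{\triangulation_\circ}{\eta_3}
\]
(resp.\ with $\eta_2, \eta_4$) for every $\gamma_\circ \in \triangulation_\circ$. I would establish this by a short case analysis on how the hollow diagonal $\gamma_\circ$ meets the solid quadrilateral $\quadrilateral(\delta_\bullet \in \triangulation_\bullet)$: the identity is trivial when $\gamma_\circ$ is disjoint from its interior, the contributions on the two sides incident to a common vertex telescope when $\gamma_\circ$ terminates at a vertex of the quadrilateral, and when $\gamma_\circ$ cuts across from one side to an opposite side the two $\ZZZ/\SSS$ crossings with the diagonals match exactly the two crossings with the pair of sides $\gamma_\circ$ exits through.

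\textbf{Inductive step for $\b{c}$-vectors.} In type~$A$ the exchange matrix is skew-symmetric, so $\B(\triangulation_\circ)^\vee = \B(\triangulation_\circ)$ and each seed coincides with its dual. Theorem~\ref{thm:duality} then asserts that $\gvectors{\triangulation_\circ}{\triangulation_\bullet}$ and $\cvectors{\triangulation_\circ}{\triangulation_\bullet}$ are dual bases. Using the $\b{g}$-vector formula already proved, the $\b{c}$-vector formula thus reduces to the combinatorial duality
\[
\sum_{\delta_\circ \in \triangulation_\circ} \bigsign{\delta_\circ}{\triangulation_\circ}{\delta_\bullet} \, \bigsign{\delta'_\bullet}{\triangulation_\bullet}{\delta_\circ} = -\delta_{\delta_\bullet = \delta'_\bullet}
\qquad \text{for } \delta_\bullet, \delta'_\bullet \in \triangulation_\bullet,
\]
which I would verify by the same kind of crossing analysis (or, alternatively, by a direct parallel induction exploiting the mutation rule for the coefficient tuple in Section~\ref{sec:prerequisitesCA}).

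\textbf{Main obstacle.} The technical heart of the argument is the local shear-coordinate flip identity: each configuration is elementary, but the case split on how $\gamma_\circ$ enters, exits, or touches the solid quadrilateral produces a moderate number of sub-cases that require careful sign bookkeeping between $\ZZZ$ and $\SSS$ crossings and the corner conventions. As an alternative route, the statement is a particular case of the Fomin--Shapiro--Thurston shear-coordinate interpretation of $\b{g}$- and $\b{c}$-vectors for cluster algebras from surfaces, so one could obtain the proposition by specializing that general machinery to the disk.
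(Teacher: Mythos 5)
The paper never actually proves this proposition: it is stated as the disk specialization of the shear-coordinate machinery of Fomin--Shapiro--Thurston and Fomin--Thurston, the preceding sentence serving as the citation. So the ``alternative route'' you mention at the end is exactly the paper's route, while your main plan --- a self-contained induction on flips --- is a genuinely different and more elementary argument. Its architecture is sensible: the base case at~$\triangulation_\bullet^\mi$ (including the corner-crossing convention that kills the contributions of the other diagonals of~$\triangulation_\circ$), the reduction of the $\b{c}$-vector formula to the pairing identity via Theorem~\ref{thm:duality} (dual bases are unique, and in type~$A$ the algebra is self-dual), and a local crossing analysis at each flip are the right ingredients.

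There is, however, a concrete gap in your inductive step for $\b{g}$-vectors. Lemma~\ref{lem:linearDependence} only says that \emph{exactly one} of the two dependences (through~$\eta_1,\eta_3$ or through~$\eta_2,\eta_4$) holds for the true $\b{g}$-vectors, and which one is governed by sign coherence: it is determined by which of~$\positiveExponents{p_{x_{\delta_\bullet}}}$ and~$\negativeExponents{p_{x_{\delta_\bullet}}}$ is trivial, i.e.\ by the sign of the $\b{c}$-vector of~$x_{\delta_\bullet}$ in the seed~$\triangulation_\bullet$. Your case analysis will establish that the shear coordinates satisfy one specific local identity, the pair of opposite sides being dictated by the geometry; but this alone does not let you conclude that~$\gvector{\triangulation_\circ}{\delta'_\bullet}$ equals its shear expression. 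If the algebraic dependence were the other one, the value extracted from your identity would simply be wrong: the two candidate values for~$\gvector{\triangulation_\circ}{\delta'_\bullet}$ differ, since the $\b{g}$-vectors of a cluster are linearly independent. To close the induction you must carry the $\b{c}$-vector (equivalently, coefficient-sign) formula along simultaneously, so that at each flip you know which dependence the algebra imposes and can check that it is the one your crossing analysis produces; in other words, the clean structure is a single simultaneous induction on both formulas using the mutation recursions, where the case split in the $\b{g}$-recursion is resolved by the inductively known $\b{c}$-sign, rather than proving the $\b{g}$-formula in isolation and only afterwards treating the $\b{c}$-formula by duality or by the coefficient mutation rule (which you mention only as an aside). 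With that restructuring --- or by leaning on the surfaces literature, as the paper itself does --- your plan goes through.
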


Intuitively, the $\b{g}$-vector of~$\delta_\bullet$ is given by alternating~$\pm 1$ along the \defn{zigzag} of~$\delta_\bullet$ in~$\triangulation_\circ$ (the diagonals of~$\triangulation_\circ$ that cross opposite edges of~$\quadrilateral(\delta_\circ \in \triangulation_\circ)$) and the $\b{c}$-vector of~$\delta_\bullet$~in~$\triangulation_\bullet$ is up to a sign the characteristic vector of the diagonals of~$\triangulation_\circ$ that cross opposite edges of~$\quadrilateral(\delta_\bullet \in \triangulation_\bullet)$.

For example, the solid diagonal~$2_\bullet 10_\bullet$ in the triangulation~$\triangulation^\ell_\bullet$ of \fref{fig:exmTriangulations}\,(left) has $\b{g}$-vector $\gvector{\triangulation_\circ}{2_\bullet 10_\bullet} = {\fundamentalWeight_{3_\circ 15_\circ} - \fundamentalWeight_{9_\circ 15_\circ}}$ and $\b{c}$-vector~$\cvector{\triangulation_\circ}{\triangulation_\bullet^\ell}{2_\bullet 10_\bullet} = - \simpleRoot_{5_\circ 15_\circ} - \simpleRoot_{9_\circ 15_\circ}$, while the blue diagonal~$4_\bullet 14_\bullet$ in the triangulation~$\triangulation^r_\bullet$ of \fref{fig:exmTriangulations}\,(right) has $\b{g}$-vector~$\gvector{\triangulation_\circ}{4_\bullet 14_\bullet} = \fundamentalWeight_{5_\circ 15_\circ}$ and $\b{c}$-vector~$\cvector{\triangulation_\circ}{\triangulation_\bullet^r}{4_\bullet 14_\bullet} = \simpleRoot_{5_\circ 15_\circ} + \simpleRoot_{9_\circ 15_\circ}$.

Note that there is one $\b{g}$-vector~$\gvector{\triangulation_\circ}{\delta_\bullet}$ for each internal diagonal~$\delta_\bullet$.
In contrast, many~$\delta_\bullet \in \triangulation_\bullet$ give the same $\b{c}$-vector~$\cvector{\triangulation_\circ}{\triangulation_\bullet}{\delta_\bullet}$.
For a diagonal~$\delta_\bullet = u_\bullet v_\bullet$, let~$\accordion_\circ^-$ (resp.~$\accordion_\circ^+$) denote the edges of~$\triangulation_\circ$ crossed by~$\delta_\bullet$ and not incident to the vertices~${u_\circ+1}$ or~${v_\circ+1}$ (resp.~${u_\circ-1}$ or~${v_\circ-1}$).
Define~${\b{c}^-(\delta_\bullet) \eqdef -\sum_{\delta_\circ \in \accordion_\circ^-} \omega_{\delta_\circ}}$ and~${\b{c}^+(\delta_\bullet) \eqdef \sum_{\delta_\circ \in \accordion_\circ^+} \omega_{\delta_\circ}}$.
The negative (resp.~positive) $\b{c}$-vectors of~$\allcvectors{\triangulation_\circ}$ are then precisely given by the vectors~$\b{c}^-(\delta_\bullet)$ (resp.~$\b{c}^+(\delta_\bullet)$) for all diagonals~$\delta_\bullet$ not in~$\triangulation_\bullet^\mi \eqdef \set{(i-1)_\bullet (j-1)_\bullet}{(i_\circ, j_\circ) \in \triangulation_\circ}$  (resp.~$\triangulation_\bullet^\ma \eqdef \set{(i+1)_\bullet (j+1)_\bullet}{(i_\circ, j_\circ) \in \triangulation_\circ}$).

Specializing Theorem~\ref{thm:gvectorFan}, the simplicial complex of dissections of~$\polygon_n$ is realized by the $\b{g}$-vector fan~$\gvectorFan[\triangulation_\circ] \eqdef \set{\gvector{\triangulation_\circ}{\dissection_\bullet}}{\dissection_\bullet \text{ dissection of } \polygon_n}$, which coarsens the $\b{c}$-vector fan~$\cvectorFan[\triangulation_\circ]$ (defined by the arrangement of hyperplanes orthogonal to the $\b{c}$-vectors of~$\allcvectors{\triangulation_\circ}$), which in turn coarsens the Coxeter fan~$\CoxeterFan[\triangulation_\circ]$ (defined by the Coxeter arrangement for the Cartan matrix~$\A{\B(\triangulation_\circ)}$).
These fans are illustrated in \fref{fig:exmFans} for various initial hollow triangulations~$\triangulation_\circ$.

\begin{figure}[p]
	\vspace{.5cm}
	\capstart
	\centerline{\includegraphics[scale=.4]{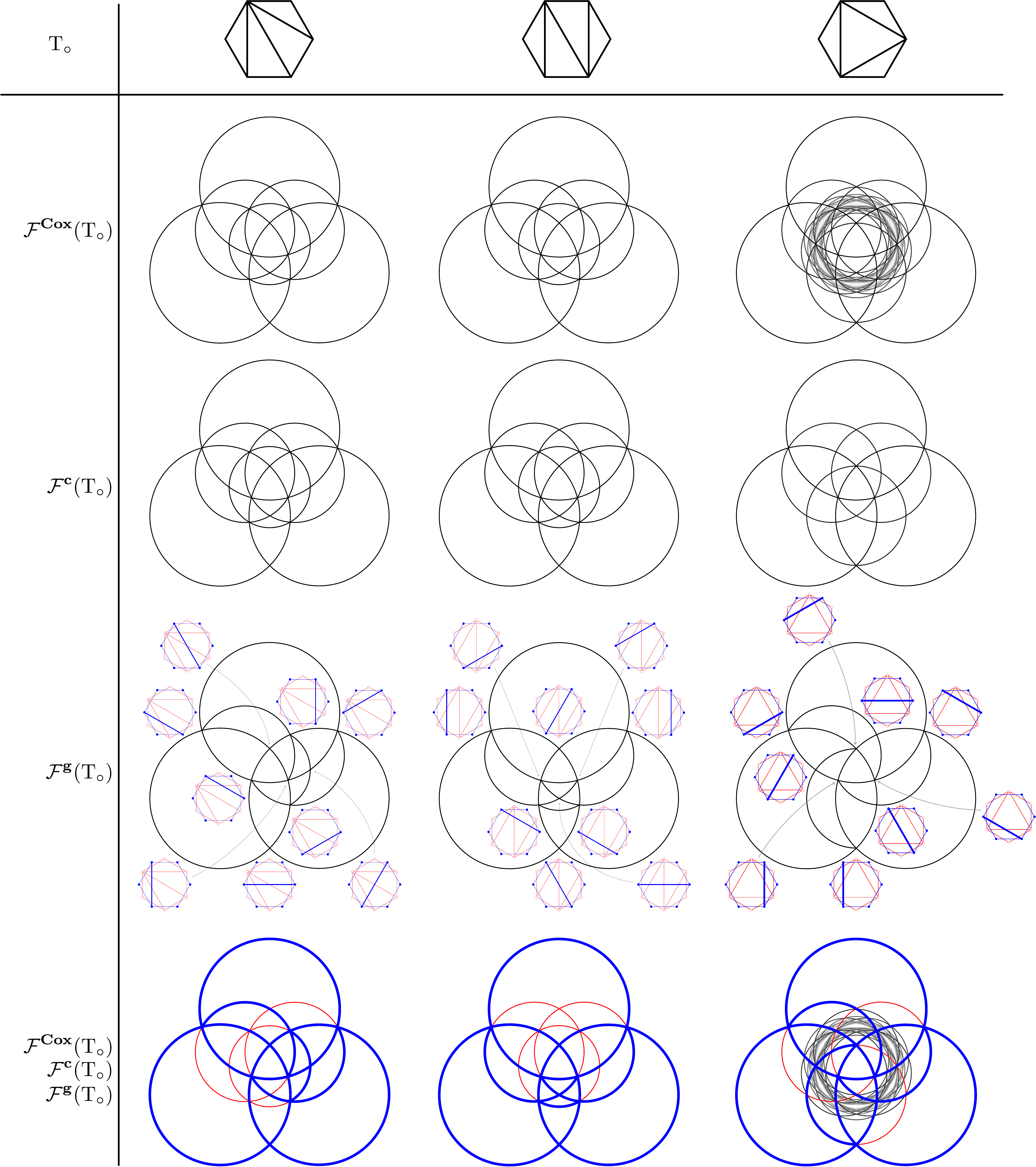}}
	\caption{Stereographic projections of the Coxeter fan~$\CoxeterFan[\triangulation_\circ]$, the $\b{c}$-vector fan~$\cvectorFan[\triangulation_\circ]$, and the $\b{g}$-vector fans~$\gvectorFan[\triangulation_\circ]$ for various reference triangulations~$\triangulation_\circ$. The $3$-dimensional fan is intersected with the unit sphere and stereographically projected to the plane from the pole in direction~$(-1,-1,-1)$.}
	\label{fig:exmFans}
\end{figure}

\para{$\triangulation_\circ$-zonotope, $\triangulation_\circ$-associahedron and $\triangulation_\circ$-parallelepiped.}
\enlargethispage{.4cm}
Using these $\b{g}$- and $\b{c}$-vectors, we now consider three polytopes associated to~$\triangulation_\circ$:
\begin{enumerate}
\item The \defn{$\triangulation_\circ$-zonotope}~$\Zono[\triangulation_\circ]{}$ is the Minkowski sum of all $\b{c}$-vectors:
\[
\Zono[\triangulation_\circ]{} \eqdef \sum_{\b{c} \in \allcvectors{\triangulation_\circ}} \b{c}.
\]
Its normal fan is the fan given by the arrangement of the hyperplanes normal to the $\b{c}$-vectors of~$\allcvectors{\triangulation_\circ}$.

\item The \defn{$\triangulation_\circ$-associahedron}~$\Asso[\triangulation_\circ]{}$ is the polytope defined equivalently as
\begin{enumerate}[(i)]
\item the convex hull of the points
\(
\bigpoint{\triangulation_\circ}{\triangulation_\bullet}{} \eqdef \sum_{\delta_\bullet \in \triangulation_\bullet} \F_\rho(\delta_\bullet) \, \cvector{\triangulation_\circ}{\triangulation_\bullet}{\delta_\bullet}
\)
for all solid triangulations~$\triangulation_\bullet$,

\item the intersection of the half-spaces
\(
{\bigHS{\triangulation_\circ}{\delta_\bullet}{} \eqdef \bigset{\b{v} \in \R^{\triangulation_\circ}\!}{\!\dotprod{\gvector{\triangulation_\circ}{\delta_\bullet}}{\b{v}} \le \F_\rho(\delta_\bullet)}}
\)
for all solid diagonal~$\delta_\bullet$.
\end{enumerate}
The normal fan of~$\Asso[\triangulation_\circ]{}$ is the $\b{g}$-vector fan~$\gvectorFan[\triangulation_\circ]$.

\item The \defn{$\triangulation_\circ$-parallelepiped}~$\Para[\triangulation_\circ]{}$ is the parallelepiped defined as
\[
\qquad\qquad
\Para[\triangulation_\circ]{} \eqdef \bigset{\b{v} \in \R^{\triangulation_\circ}}{|\dotprod{\fundamentalWeight_{\delta_\circ}}{\b{v}}| \le \F_\rho\big((i-1)_\bullet (j-1)_\bullet\big) \text{ for all } \delta_\circ = i_\circ j_\circ \in \triangulation_\circ}.
\]
Its normal fan is the coordinate fan, defined by the coordinate hyperplanes.

\end{enumerate}
These polytopes are illustrated in \fref{fig:exmAssociahedra} for various initial hollow triangulations~$\triangulation_\circ$.
Note that the $\triangulation_\circ$-zonotope~$\Zono[\triangulation_\circ]{}$ is not simple in general.
To simplify the presentation, we restricted the definition to the balanced exchange submodular function~$\F_\rho$, but similar definitions would of course hold with any exchange submodular function.

These three polytopes are strongly related: not only their normal fans, but even their inequality descriptions, refine each other, as stated in our next proposition.
This property is quite specific to type~$A$ as shown in Section~\ref{subsec:zonotope}.

\begin{figure}[p]
	\capstart
	\vspace{.5cm}
	\centerline{\includegraphics[scale=.4]{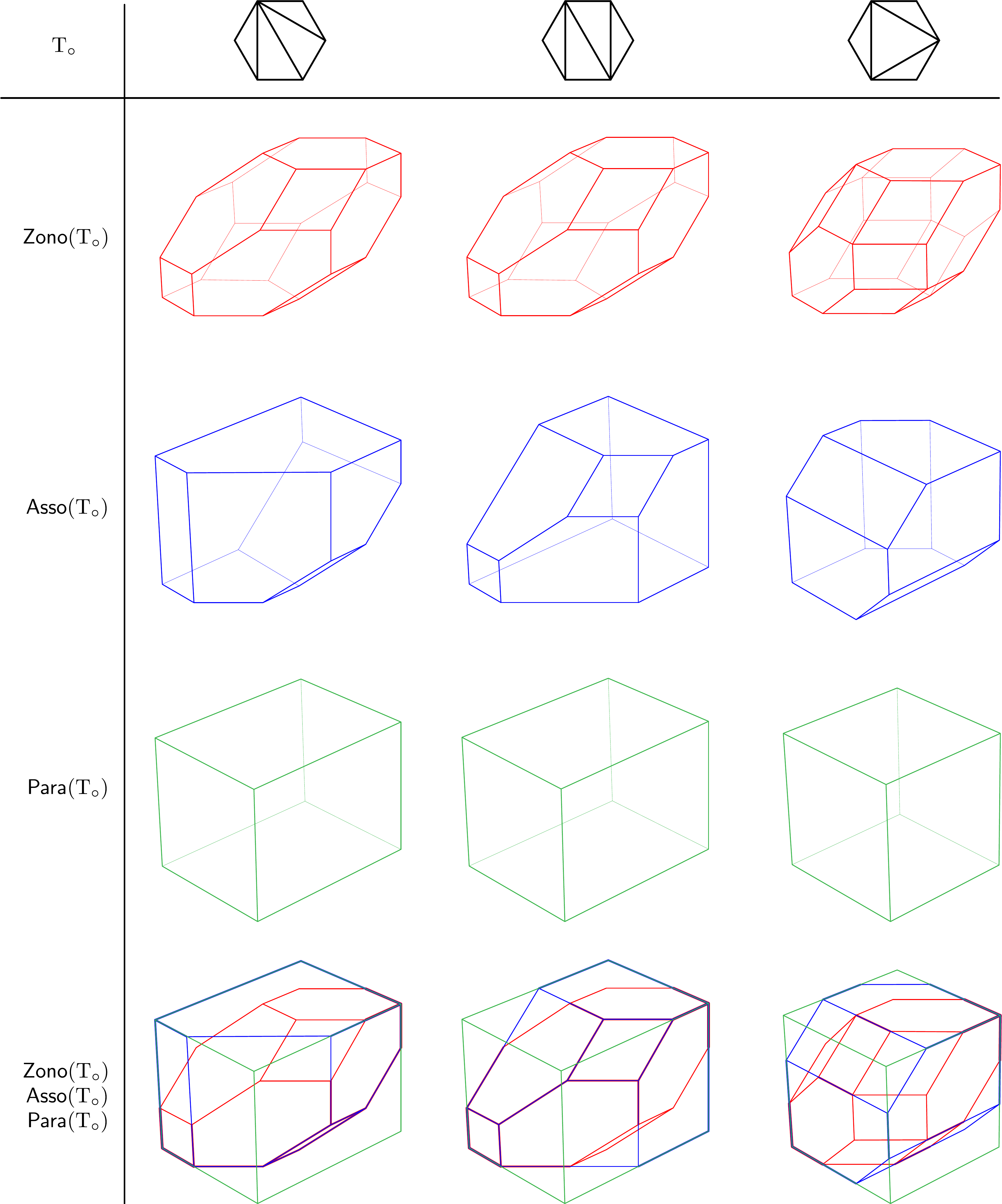}}
	\caption{The zonotope~$\Zono[\triangulation_\circ]{}$, associahedron~$\Asso[\triangulation_\circ]{}$ and parallelepiped~$\Para[\triangulation_\circ]{}$ for different reference triangulations~$\triangulation_\circ$. The first column is J.-L.~Loday's associahedron~\cite{Loday}, the second column is one of C.~Hohlweg and C.~Lange's associahedra~\cite{HohlwegLange}, the third column appeared in a discussion in C.~Ceballos, F.~Santos and G.~Ziegler's survey on \mbox{associahedra~\cite[Fig.~3]{CeballosSantosZiegler}}.}
	\label{fig:exmAssociahedra}
	\vspace{1cm}
\end{figure}

\begin{proposition}
\label{prop:removahedra}
All facet defining inequalities of~$\Para[\triangulation_\circ]{}$ are facet defining inequalities of~$\Asso[\triangulation_\circ]{}$, and all facet defining inequalities of~$\Asso[\triangulation_\circ]{}$ are facet defining inequalities of~$\Zono[\triangulation_\circ]{}$.
\end{proposition}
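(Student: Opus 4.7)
The plan is to verify each inclusion by matching facet-defining inequalities directly, using the shear-coordinate descriptions of $\b{g}$- and $\b{c}$-vectors recalled above.

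For the first inclusion, I would fix a diagonal $\delta_\circ = i_\circ j_\circ \in \triangulation_\circ$: its two walls in $\Para[\triangulation_\circ]{}$ have outer normals $\pm \fundamentalWeight_{\delta_\circ}$ and common right-hand side $\F_\rho\big((i-1)_\bullet(j-1)_\bullet\big)$. I would match them with the facets of $\Asso[\triangulation_\circ]{}$ coming from the two solid diagonals $\delta_\bullet^- \eqdef (i-1)_\bullet(j-1)_\bullet \in \triangulation_\bullet^\mi$ and $\delta_\bullet^+ \eqdef (i+1)_\bullet(j+1)_\bullet \in \triangulation_\bullet^\ma$. Since $\delta_\bullet^-$ sits in the initial triangulation, $\gvector{\triangulation_\circ}{\delta_\bullet^-} = +\fundamentalWeight_{\delta_\circ}$ holds tautologically; a mirror computation would yield $\gvector{\triangulation_\circ}{\delta_\bullet^+} = -\fundamentalWeight_{\delta_\circ}$. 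The key observation to carry out these computations is that any diagonal $\delta_\circ' \ne \delta_\circ$ of $\triangulation_\circ$ must share a vertex $v$ with $\delta_\circ$ (otherwise the two would cross), and a direct inspection shows that $\delta_\bullet^\pm$ enters and exits the quadrilateral $\quadrilateral(\delta_\circ' \in \triangulation_\circ)$ through the two edges incident to $v$, yielding a corner crossing with vanishing sign. The right-hand sides match because $\delta_\bullet^-$ and $\delta_\bullet^+$ split the solid polygon into arcs of the same cardinalities as $(i-1)_\bullet(j-1)_\bullet$, so $\F_\rho$ agrees on all three.

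For the second inclusion, I would first invoke the fact that the normal fan of $\Zono[\triangulation_\circ]{}$ is the $\b{c}$-vector arrangement fan $\cvectorFan[\triangulation_\circ]$, which refines the $\b{g}$-vector fan $\gvectorFan[\triangulation_\circ]$ (the normal fan of $\Asso[\triangulation_\circ]{}$). Since the $1$-dimensional cones of a coarsening are automatically $1$-dimensional cones of the refinement, each $\b{g}$-vector $\gvector{\triangulation_\circ}{\delta_\bullet}$ is already a ray of $\cvectorFan[\triangulation_\circ]$ and therefore defines a facet normal of $\Zono[\triangulation_\circ]{}$. It then remains to verify the matching of the right-hand sides, \ie
\[
\F_\rho(\delta_\bullet) \; = \; h_{\Zono[\triangulation_\circ]{}}\!\big( \gvector{\triangulation_\circ}{\delta_\bullet} \big) \; = \; \sum_{\b{c} \in \allcvectors{\triangulation_\circ}} \max\!\big( 0, \bigdotprod{\gvector{\triangulation_\circ}{\delta_\bullet}}{\b{c}} \big),
\]
which, using the explicit $\b{c}^\pm$-descriptions, reduces to a counting identity relating the zigzag of $\delta_\bullet$ in $\triangulation_\circ$ to the accordions of all other solid diagonals.

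The structural propagation of facet normals along the chain $\Para[\triangulation_\circ]{} \subset \Asso[\triangulation_\circ]{} \subset \Zono[\triangulation_\circ]{}$ is essentially automatic: it comes from corner-crossing cancellations in the first step and from the coarsening of fans in the second. The hard part will be the combinatorial identity for the right-hand sides in the second inclusion, where one must carefully account for which $\b{c}$-vectors contribute positively to the pairing with a given sign-alternating $\b{g}$-vector; this bookkeeping over accordions and zigzags is where I expect the main effort to go.
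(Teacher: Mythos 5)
Your overall architecture matches the paper's: the first inclusion is handled by identifying the normals~$\pm\fundamentalWeight_{\delta_\circ}$ of~$\Para[\triangulation_\circ]{}$ with the $\b{g}$-vectors of~$(i-1)_\bullet(j-1)_\bullet$ and~$(i+1)_\bullet(j+1)_\bullet$ (these are exactly the two identities the paper invokes, and the right-hand sides agree since the three diagonals cut off arcs of the same sizes), and the second inclusion amounts to showing that the support function of~$\Zono[\triangulation_\circ]{}$ in the direction of each $\b{g}$-vector equals~$\F_\rho$. One small slip in the first part: it is \emph{not} true that every diagonal~$\delta_\circ' \ne \delta_\circ$ of~$\triangulation_\circ$ shares a vertex with~$\delta_\circ$ (non-crossing diagonals need not share a vertex); what you actually need, and what does hold, is that every diagonal of~$\triangulation_\circ$ crossed by~$(i\pm1)_\bullet(j\pm1)_\bullet$ and different from~$\delta_\circ$ must share a vertex with~$\delta_\circ$, since otherwise it would cross~$\delta_\circ$. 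With that correction the corner-crossing argument goes through. Your facet-ness argument in the second part (each $\b{g}$-vector is a ray of~$\cvectorFan[\triangulation_\circ]$ because~$\gvectorFan[\triangulation_\circ]$ coarsens it, hence a facet normal of the zonotope) is a legitimate alternative to the paper's argument, which instead exhibits, via the duality between $\b{g}$- and $\b{c}$-vectors, $n-1$ linearly independent $\b{c}$-vectors orthogonal to~$\gvector{\triangulation_\circ}{\gamma_\bullet}$ and concludes by a dimension count.

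The genuine gap is that you stop exactly where the paper's proof begins in earnest: the identity $\sum_{\b{c}\in\allcvectors{\triangulation_\circ}} \max\big(0, \dotprod{\gvector{\triangulation_\circ}{\gamma_\bullet}}{\b{c}}\big) = \F_\rho(\gamma_\bullet)$ is announced as ``a counting identity'' and deferred. This is the substance of the proposition, not routine bookkeeping. One must first check that each pairing lies in~$\{-1,0,1\}$ (because all diagonals of the zigzag of~$\gamma_\bullet$ traversing~$\quadrilateral(\delta_\bullet \in \triangulation_\bullet)$ cross it the same way), then, using the explicit description of the negative and positive $\b{c}$-vectors as~$\b{c}^-(\delta_\bullet)$ and~$\b{c}^+(\delta_\bullet)$ and the central symmetry~$\allcvectors{\triangulation_\circ} = -\allcvectors{\triangulation_\circ}$, run a case analysis on the relative position of~$\gamma_\bullet$ and~$\delta_\bullet$ (disjoint with no common endpoint, sharing an endpoint with either orientation of the angle, crossing) to show that the $\b{c}$-vectors pairing negatively (equivalently, positively) with~$\gvector{\triangulation_\circ}{\gamma_\bullet}$ are counted precisely by the diagonals crossing~$\gamma_\bullet$, which yields~$\rhs{\gamma_\bullet} = \F_\rho(\gamma_\bullet)$. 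Without this case analysis (the paper's items (i)--(iii)), your write-up establishes only that~$\Asso[\triangulation_\circ]{}$ and~$\Zono[\triangulation_\circ]{}$ have compatible facet normals, not that the corresponding right-hand sides agree, so the second inclusion remains unproved.
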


\begin{proof}
The first part of the sentence is immediate since for any initial diagonal~$\delta_\circ = i_\circ j_\circ$, we have
\[
\biggvector{\triangulation_\circ}{(i-1)_\bullet (j-1)_\bullet} = \fundamentalWeight_{\delta_\circ},
\qquad\text{and}\qquad
\biggvector{\triangulation_\circ}{(i+1)_\bullet (j+1)_\bullet} = -\fundamentalWeight_{\delta_\circ}.
\]
For the second part of the statement, let~$\rhs{\gamma_\bullet}$ denote the maximum of~$\dotprod{\gvector{\triangulation_\circ}{\gamma_\bullet}}{\b{v}}$ over~$\Zono[\triangulation_\circ]{}$. As $\Zono[\triangulation_\circ]{}$ is the Minkowski sum of all $\b{c}$-vectors, we have
\[
\rhs{\gamma_\bullet} = \sum_{\substack{\b{c} \in \allcvectors{\triangulation_\circ} \\ \dotprod{\gvector{\triangulation_\circ}{\gamma_\bullet}}{\b{c}} > 0}} \dotprod{\biggvector{\triangulation_\circ}{\gamma_\bullet}}{\b{c}}.
\]
To compute this sum, recall that the $\b{g}$-vector~$\gvector{\triangulation_\circ}{\gamma_\bullet}$ has alternating~$\pm 1$ along the zigzag~$\zigzag_\circ$ of~$\gamma_\bullet$ in~$\triangulation_\circ$.
Choose a $\b{c}$-vector~$\b{c} \in \allcvectors{\triangulation_\circ}$ and let~$\delta_\bullet \in \triangulation_\bullet$ be such that~${\b{c} = \cvector{\triangulation_\circ}{\triangulation_\bullet}{\delta_\bullet}}$.
Since all diagonals of~$\zigzag_\circ$ that traverse~$\quadrilateral(\delta_\bullet \in \triangulation_\bullet)$ cross it in the same way (either all as~$\SSS$ or all as~$\ZZZ$), we have~$\dotprod{\gvector{\triangulation_\circ}{\gamma_\bullet}}{\b{c}} \in \{-1,0,1\}$.
We thus want to count the $\b{c}$-vectors~$\b{c} \in \allcvectors{\triangulation_\circ}$ for which~$\dotprod{\gvector{\triangulation_\circ}{\gamma_\bullet}}{\b{c}} > 0$.
It actually turns out that it is more convenient and equivalent (since~${\allcvectors{\triangulation_\circ} = -\allcvectors{\triangulation_\circ}}$) to count the $\b{c}$-vectors~$\b{c} \in \allcvectors{\triangulation_\circ}$ for which~${\dotprod{\gvector{\triangulation_\circ}{\gamma_\bullet}}{\b{c}} < 0}$.

Decompose the zigzag~$\zigzag_\circ$ into~$\zigzag_\circ = \zigzag_\circ^+ \sqcup \zigzag_\circ^-$ such that~${\gvector{\triangulation_\circ}{\gamma_\bullet} = \sum_{\delta_\circ \in \zigzag_\circ^+} \omega_{\delta_\circ} - \sum_{\delta_\circ \in \zigzag_\circ^-} \omega_{\delta_\circ}}$.
For a diagonal~$\delta_\bullet = u_\bullet v_\bullet$, let~$\accordion_\circ^-$ (resp.~$\accordion_\circ^+$) denote the edges of~$\triangulation_\circ$ crossed by~$\delta_\bullet$ and not incident to~${u_\circ+1}$ or~${v_\circ+1}$ (resp.~${u_\circ-1}$ or~${v_\circ-1}$).
Let~${\b{c}^-(\delta_\bullet) \eqdef -\sum_{\delta_\circ \in \accordion_\circ^-} \omega_{\delta_\circ}}$ and~${\b{c}^+(\delta_\bullet) \eqdef \sum_{\delta_\circ \in \accordion_\circ^+} \omega_{\delta_\circ}}$.
Recall that the negative (resp.~positive) $\b{c}$-vectors of~$\allcvectors{\triangulation_\circ}$ are then precisely given by the vectors~$\b{c}^-(\delta_\bullet)$ (resp.~$\b{c}^+(\delta_\bullet)$) for all diagonals~$\delta_\bullet$ not in~$\triangulation_\bullet^\mi \eqdef \set{(i-1)_\bullet (j-1)_\bullet}{(i_\circ, j_\circ) \in \triangulation_\circ}$  (resp.~$\triangulation_\bullet^\ma \eqdef \set{(i+1)_\bullet (j+1)_\bullet}{(i_\circ, j_\circ) \in \triangulation_\circ}$).
We leave it to the reader to check that:
\begin{enumerate}[(i)]
\item If~$\gamma_\bullet$ and~$\delta_\bullet$ do not cross and have no common endpoint, both~$|\zigzag_\circ \cap \accordion_\circ^-|$ and~$|\zigzag_\circ \cap \accordion_\circ^+|$ are even. Thus $\dotprod{\gvector{\triangulation_\circ}{\gamma_\bullet}}{\b{c}^-(\delta_\bullet)} = \dotprod{\gvector{\triangulation_\circ}{\gamma_\bullet}}{\b{c}^+(\delta_\bullet)} = 0$.
\item If~$\gamma_\bullet$ and~$\delta_\bullet$ have a common endpoint, and~$\gamma_\bullet \delta_\bullet$ form a counterclockwise angle, then~$|{\zigzag_\circ \cap \accordion_\circ^-}|$ is even while $\zigzag_\circ \cap \accordion_\circ^+$ is empty or starts and ends in~$\zigzag_\circ^+$. Thus $\dotprod{\gvector{\triangulation_\circ}{\gamma_\bullet}}{\b{c}^-(\delta_\bullet)} = 0$ while $\dotprod{\gvector{\triangulation_\circ}{\gamma_\bullet}}{\b{c}^+(\delta_\bullet)} \ge 0$. The situation is similar if~$\gamma_\bullet \delta_\bullet$ form a clockwise angle.
\item If~$\gamma_\bullet$ and~$\delta_\bullet$ cross,~$\zigzag_\circ \cap \accordion_\circ^-$ and~${\zigzag_\circ \cap \accordion_\circ^+}$ are empty or start and end both in~$\zigzag_\circ^-$ or both in~$\zigzag_\circ^+$. Thus, either~$\dotprod{\gvector{\triangulation_\circ}{\gamma_\bullet}}{\b{c}^-(\delta_\bullet)} < 0$ and~$\dotprod{\gvector{\triangulation_\circ}{\gamma_\bullet}}{\b{c}^+(\delta_\bullet)} \ge 0$ or conversely.
\end{enumerate}
This shows that there are as many $\b{c}$-vectors~$\b{c} \in \allcvectors{\triangulation_\circ}$ for which~${\dotprod{\gvector{\triangulation_\circ}{\gamma_\bullet}}{\b{c}} < 0}$ as diagonals~$\delta_\bullet$ crossing~$\gamma_\bullet$.
In other words,~$\rhs{\gamma_\bullet} = \F_\rho(\gamma_\bullet)$.

Finally, we obtained that the inequality~$\dotprod{\gvector{\triangulation_\circ}{\gamma_\bullet}}{\b{v}} \le \rhs{\gamma_\bullet}$ defines a face~$\face(\gamma_\bullet)$ of the zonotope~$\Zono[\triangulation_\circ]{}$.
This face~$\face(\gamma_\bullet)$ is the Minkowski sum of the $\b{c}$-vectors of~$\allcvectors{\triangulation_\circ}$ orthogonal to~$\gvector{\triangulation_\circ}{\gamma_\bullet}$.
Theorem~\ref{thm:duality} ensures that any triangulation~$\triangulation_\bullet$ containing~$\gamma_\bullet$ already provides~$n-1$ linearly independent such $\b{c}$-vectors~$\cvector{\triangulation_\circ}{\triangulation_\bullet}{\delta_\bullet}$ for~${\delta_\bullet \in \triangulation_\bullet \ssm \{\gamma_\bullet\}}$.
We obtain that~$\face(\gamma_\bullet)$ has dimension~$n-1$ and is therefore a facet of the zonotope~$\Zono[\triangulation_\circ]{}$.
\end{proof}

\para{Vertex barycenter.}
We now use this (type~$A$) interpretation of the $\triangulation_\circ$-associahedron~$\Asso[\triangulation_\circ]{}$ to show that its vertex barycenter is also at the origin.
Our approach is independent, and somewhat simpler than the previous proofs of this property for type~$A$ acyclic associahedra~\cite{HohlwegLortieRaymond, LangePilaud}.

\begin{proposition}
\label{prop:barycenterTypeA}
For any initial triangulation~$\triangulation_\circ$, the origin is the vertex barycenter of the $\triangulation_\circ$-zonotope~$\Zono[\triangulation_\circ]{}$, the $\triangulation_\circ$-associahedron~$\Asso[\triangulation_\circ]{}$ and the $\triangulation_\circ$-parallelepiped~$\Para[\triangulation_\circ]{}$.
\end{proposition}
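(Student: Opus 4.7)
The proof splits by polytope. For the zonotope $\Zono[\triangulation_\circ]{}$, the set of $\b{c}$-vectors is centrally symmetric ($\allcvectors{\triangulation_\circ} = -\allcvectors{\triangulation_\circ}$, as observed in the discussion before Proposition~\ref{prop:removahedra}), so the zonotope itself is centrally symmetric about the origin and its vertex barycenter lies there. For the parallelepiped $\Para[\triangulation_\circ]{}$, the defining inequalities $|\dotprod{\fundamentalWeight_{\delta_\circ}}{\b{v}}| \le \F_\rho\big((i-1)_\bullet(j-1)_\bullet\big)$ are manifestly invariant under $\b{v} \mapsto -\b{v}$, so the parallelepiped is also centrally symmetric about the origin, giving the same conclusion.

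The substantive case is the associahedron $\Asso[\triangulation_\circ]{}$. The plan is to expand
\[
\sum_{\triangulation_\bullet} \bigpoint{\triangulation_\circ}{\triangulation_\bullet}{} = \sum_{\triangulation_\bullet} \sum_{\delta_\bullet \in \triangulation_\bullet} \F_\rho(\delta_\bullet)\, \cvector{\triangulation_\circ}{\triangulation_\bullet}{\delta_\bullet}
\]
and substitute the shear coordinate formula $\cvector{\triangulation_\circ}{\triangulation_\bullet}{\delta_\bullet} = -\sum_{\delta_\circ} \sign{\delta_\bullet}{\triangulation_\bullet}{\delta_\circ}\, \simpleRoot_{\delta_\circ}$, reducing the statement coordinate-wise (in the simple-root basis indexed by $\triangulation_\circ$) to the identity
\[
\sum_{(\triangulation_\bullet, \delta_\bullet):\, \delta_\bullet \in \triangulation_\bullet} \F_\rho(\delta_\bullet)\, \sign{\delta_\bullet}{\triangulation_\bullet}{\delta_\circ} = 0, \qquad \text{for each fixed } \delta_\circ \in \triangulation_\circ.
\]
Since $\sign{\delta_\bullet}{\triangulation_\bullet}{\delta_\circ}$ depends only on the quadrilateral $Q \eqdef \quadrilateral(\delta_\bullet \in \triangulation_\bullet)$, we reorganize the sum by grouping the pairs $(\triangulation_\bullet, \delta_\bullet)$ according to their underlying quadrilateral $Q$.

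For a solid quadrilateral $Q$ with vertices $u_\bullet, a_\bullet, v_\bullet, b_\bullet$ in cyclic order and vertex-gap sizes $(p, q, r, s)$ in the four arcs between consecutive vertices of $Q$, three observations drive the analysis: (a) the $\b{c}$-vectors attached to the two diagonals $u_\bullet v_\bullet$ and $a_\bullet b_\bullet$ of $Q$ are negatives of each other, by direct analysis of the shear coordinate signs under flipping the distinguished diagonal of $Q$; (b) the number of triangulations of the solid polygon realizing $Q$ is a product of four Catalan numbers, independent of which diagonal of $Q$ is chosen; and (c) a direct calculation from the formula $\F_\rho = \frac{1}{2}\#\{\text{crossing diagonals}\}$ yields $\F_\rho(u_\bullet v_\bullet) - \F_\rho(a_\bullet b_\bullet) = (p-r)(s-q)/2$. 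Each quadrilateral $Q$ therefore contributes a scalar multiple of the single $\b{c}$-vector $\b{c}(Q, u_\bullet v_\bullet)$, with coefficient proportional to $(p-r)(s-q)$. The main obstacle is then to show that the global sum of these contributions vanishes: individual contributions can be nonzero, and since $\triangulation_\circ$ admits no elementary symmetry, the cancellation cannot be read off directly. The expected route is an involution on the set of pairs (quadrilateral, distinguished diagonal) that simultaneously negates both the associated $\b{c}$-vector and the gap-difference $(p-r)(s-q)$, so as to pair up contributions that cancel; the balanced form of $\F_\rho$ is essential for engineering this symmetry.
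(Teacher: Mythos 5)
Your reductions are fine as far as they go: the central-symmetry argument for $\Zono[\triangulation_\circ]{}$ and $\Para[\triangulation_\circ]{}$ is exactly what is needed, the coordinate-wise reformulation $\sum_{(\triangulation_\bullet,\delta_\bullet)} \F_\rho(\delta_\bullet)\,\sign{\delta_\bullet}{\triangulation_\bullet}{\delta_\circ} = 0$ is the right target, and your computations (a)--(c) for the grouping by quadrilaterals (flip negates the $\b{c}$-vector, the count of completions is a product of Catalan numbers, and $\F_\rho(u_\bullet v_\bullet)-\F_\rho(a_\bullet b_\bullet)=(p-r)(s-q)/2$) are correct. But the proof stops precisely where the proposition actually lives: you reduce everything to the vanishing of $\sum_Q N(Q)\,(p-r)(s-q)\,\sign{u_\bullet v_\bullet}{\cdot}{\delta_\circ}$ and then only announce that ``the expected route is an involution'' without constructing one. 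That cancellation is the whole content of the statement for $\Asso[\triangulation_\circ]{}$, so as written there is a genuine gap, not a finished proof.

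Moreover, your remark that ``$\triangulation_\circ$ admits no elementary symmetry, so the cancellation cannot be read off directly'' is exactly where the missing idea hides. Because the sum is treated one coordinate $\delta_\circ$ at a time, the symmetry does not need to preserve $\triangulation_\circ$ at all --- it only needs to stabilize the single hollow diagonal $\delta_\circ$. Realizing the $(2n+6)$ points as a regular polygon, the reflection $\Psi$ of the plane stabilizing $\delta_\circ$ maps solid diagonals and solid triangulations to solid diagonals and solid triangulations, reverses orientation so that $\sign{\delta_\bullet}{\triangulation_\bullet}{\delta_\circ} = -\bigsign{\Psi(\delta_\bullet)}{\Psi(\triangulation_\bullet)}{\delta_\circ}$, and preserves lengths so that $\F_\rho(\Psi(\delta_\bullet)) = \F_\rho(\delta_\bullet)$ (since $\F_\rho$ depends only on how many vertices lie on each side of the diagonal). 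Pairing $(\triangulation_\bullet,\delta_\bullet)$ with $(\Psi(\triangulation_\bullet),\Psi(\delta_\bullet))$ makes the $\delta_\circ$-coordinate vanish immediately, with no need for the quadrilateral bookkeeping; the same reflection, applied to your quadrilaterals $Q$, is also the involution you were looking for (it preserves $N(Q)$ and the gap data while flipping the sign). This is the argument the paper gives, and it is what your proposal is missing.
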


\begin{proof}
Assume that we started from a regular~$(2n+6)$-gon with alternative hollow and solid vertices.
Consider a diagonal~$\delta_\circ$ of~$\triangulation_\circ$ and let~$\Psi$ denote the reflexion of the plane which stabilizes~$\delta_\circ$.
Note that~$\Psi$ sends solid diagonals (resp.~triangulations) onto solid diagonals (resp.~triangulations).
Moreover, a diagonal~$\delta_\bullet$ crosses~$\delta_\circ$ if and only if its image~$\Psi(\delta_\bullet)$ crosses~$\delta_\circ$.
Since~$\Psi$ reverses the orientation, we therefore obtain that
\[
\bigsign{\delta_\bullet}{\triangulation_\bullet}{\delta_\circ} = - \bigsign{\Psi(\delta_\bullet)}{\Psi(\triangulation_\bullet)}{\delta_\circ},
\]
for any~$\delta_\bullet \in \triangulation_\bullet$.
Finally, $\Psi$ preserves the length of a diagonal~$\delta_\bullet$, so that~$\F_\rho(\Psi(\delta_\bullet)) = \F_\rho(\delta_\bullet)$.
Summing over all diagonals in all solid triangulations, we obtain that the $\delta_\circ$-coordinate of the vertex barycenter of~$\Asso[\triangulation]{}$ is given by
\[
\Big( \sum_{\triangulation_\bullet} \bigpoint{\triangulation_\circ}{\triangulation_\bullet}{} \Big)_{\delta_\circ}
= \sum_{\triangulation_\bullet}\sum_{\delta_\bullet \in \triangulation_\bullet} \big( \F_\rho(\delta_\bullet) \, \bigcvector{\triangulation_\circ}{\triangulation_\bullet}{\delta_\bullet} \big)_{\delta_\circ}
= - \sum_{\triangulation_\bullet}\sum_{\delta_\bullet \in \triangulation_\bullet} \F_\rho(\delta_\bullet) \, \bigsign{\delta_\bullet}{\triangulation_\bullet}{\delta_\circ}
= 0
\]
since the contribution of~$\delta_\bullet \in \triangulation_\bullet$ is balanced by that of~$\Psi(\delta_\bullet) \in \Psi(\triangulation_\bullet)$.
Since this holds for any~$\delta_\circ \in \triangulation_\circ$, we conclude that the vertex barycenter of~$\Asso[\triangulation_\circ]{}$ is the origin.
It is immediate for the other two polytopes~$\Zono[\triangulation_\circ]{}$ and~$\Para[\triangulation_\circ]{}$ since they are centrally symmetric.
\end{proof}

Proposition~\ref{prop:barycenterTypeA} will be generalized in Section~\ref{subsec:barycenter} for arbitrary seeds in arbitrary finite types and for arbitrary fairly balanced point~$\lambda$.


\section{Further properties of~$\Asso{}$}
\label{sec:properties}

In this section, we discuss further geometric properties of the $\B_\circ$-associahedron~$\Asso{\F}$, motivated by the specific families of examples presented in Section~\ref{sec:specificFamilies}.
We also introduce the universal associahedron mentioned in Theorem~\ref{thm:universalAssociahedronIntro}, a high dimensional polytope which simultaneously contains the associahedra~$\Asso{\F}$ for all exchange matrices~$\B_\circ$ of a given finite type.


\subsection{Green mutations}
\label{subsec:lattice}

Motivated by Section~\ref{subsec:acyclic}, we consider a natural orientation of mutations introduced by B.~Keller in the context of quantum dilogarithm identities~\cite{Keller}.
For two adjacent seeds~$\seed = (\B, \coefficients, \cluster)$ and~$\seed' = (\B', \coefficients', \cluster')$ of~$\principalClusterAlgebra$ with~${\cluster \ssm \{x\} = \cluster' \ssm \{x'\}}$, the mutation~${\seed \to \seed'}$ is a \defn{green mutation} when the dual $\b{c}$-vector~${\cvector{\B_\circ^\vee}{\seed^\vee}{x^\vee} = -\cvector{\B_\circ^\vee}{\seed'^\vee}{x'^\vee}}$ is positive.
The directed graph~$\greenMutationsGraph$ of green mutations is known to be acyclic, and even the Hasse diagram of a lattice when the type of~$\B_\circ$ is simply laced, see~\cite[Coro.~4.7]{GarverMcConville-OrientedExchangeGraphs} and the references therein. Further lattice theoretic properties of~$\greenMutationsGraph$ are discussed in~\cite{GarverMcConville-OrientedExchangeGraphs}.

\begin{example}
For an acyclic initial exchange matrix~$\B_\circ$, the lattice of green mutations is the $c$-Cambrian lattice of N.~Reading~\cite{Reading-CambrianLattices} (where~$c$ is the Coxeter element corresponding to~$\B_\circ$).
\end{example}

It turns out that this green mutation digraph~$\greenMutationsGraph$ is apparent in the $\B_\circ$-associahedron.
Indeed, the following statement is a direct consequence of Remark~\ref{rem:differencePoints}.

\begin{proposition}
For any finite type exchange matrix~$\B_\circ$, the graph of the associahedron~$\Asso{\F}$, oriented in the linear direction~$-\sum_{x \in \cluster_\circ} \fundamentalWeight_x$, is the graph~$\greenMutationsGraph$ of green mutations in~$\principalClusterAlgebra$.
\end{proposition}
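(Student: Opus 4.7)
The strategy is to read off the orientation of each edge of $\Asso{\F}$ directly from the edge vector description recorded in Remark~\ref{rem:differencePoints}, and then to compare its sign with the $\b{c}$-vector sign that defines the color of the mutation. Since the underlying graph of $\Asso{\F}$ is the exchange graph of $\principalClusterAlgebra$ (its normal fan being $\gvectorFan$), only the edge orientations need to be verified.

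The plan is as follows. Fix two adjacent seeds $\seed = (\B, \coefficients, \cluster)$ and $\seed' = (\B', \coefficients', \cluster')$ with $\cluster \ssm \{x\} = \cluster' \ssm \{x'\}$. By Remark~\ref{rem:differencePoints},
\[
\bigpoint{\B_\circ}{\seed'}{\F} - \bigpoint{\B_\circ}{\seed}{\F} \;=\; \lambda \, \bigcvector{\B_\circ^\vee}{\seed^\vee}{x^\vee}
\]
with $\lambda \eqdef \sum_y b_{xy}\F(y) - \F(x') - \F(x) < 0$, where the negativity of $\lambda$ is precisely the exchange submodularity of~$\F$. Therefore the orientation of the edge according to the linear functional $-\sum_{z \in \cluster_\circ} \fundamentalWeight_z$ is governed by the sign of
\[
\Big\langle -\textstyle\sum_{z \in \cluster_\circ} \fundamentalWeight_z \,\Big|\, \lambda \, \bigcvector{\B_\circ^\vee}{\seed^\vee}{x^\vee} \Big\rangle
\;=\; -\lambda \, \sum_{z \in \cluster_\circ} c_z,
\]
where $\cvector{\B_\circ^\vee}{\seed^\vee}{x^\vee} = \sum_{z} c_z \, \simpleRoot^\vee_z$ is the expansion in simple coroots (using the duality $\dotprod{\fundamentalWeight_z}{\simpleRoot^\vee_w} = \delta_{z=w}$ recalled in Section~\ref{sec:prerequisitesCA}).

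Now invoke sign-coherence: by Theorem~\ref{thm:roots} (and the discussion following it), the coefficients $c_z$ are either all non-positive or all non-negative, so $\sum_z c_z$ has the same sign as the nonzero coefficients of $\cvector{\B_\circ^\vee}{\seed^\vee}{x^\vee}$. Combined with $-\lambda > 0$, this means that the above pairing is strictly positive precisely when $\cvector{\B_\circ^\vee}{\seed^\vee}{x^\vee}$ is a positive $\b{c}$-vector, i.e.\ precisely when the mutation $\seed \to \seed'$ is green. Hence the edge of $\Asso{\F}$ is oriented from $\point{\B_\circ}{\seed}{\F}$ to $\point{\B_\circ}{\seed'}{\F}$ in the direction $-\sum_z \fundamentalWeight_z$ if and only if $\seed \to \seed'$ is a green mutation, which is the desired identification of oriented graphs.

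There is no real obstacle here: the argument is essentially a sign bookkeeping, in which the two negative inputs (the scalar~$\lambda$ coming from exchange submodularity, and the minus sign in the linear direction~$-\sum \fundamentalWeight_z$) combine so that the sign of the pairing matches exactly the sign-coherent sign of the governing dual $\b{c}$-vector. The only point worth double-checking during the write-up is the convention for orienting the $1$-skeleton of a polytope by a linear functional (from lower to higher value), which is what makes the correspondence with green mutations, rather than with its opposite, come out correctly.
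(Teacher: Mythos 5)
Your proof is correct and follows essentially the same route as the paper: both use Remark~\ref{rem:differencePoints} to write each edge of~$\Asso{\F}$ as a negative multiple of the dual $\b{c}$-vector and then pair with~$-\sum_{z \in \cluster_\circ} \fundamentalWeight_z$ to conclude that the edge increases in that direction exactly when the mutation is green. Your only addition is making explicit the sign-coherence/duality bookkeeping ($\dotprod{\fundamentalWeight_z}{\simpleRoot^\vee_w} = \delta_{z=w}$ and the sign of $\sum_z c_z$), which the paper leaves implicit.
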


\begin{proof}
Consider two adjacent seeds~$\seed = (\B, \coefficients, \cluster)$ and~$\seed' = (\B', \coefficients', \cluster')$ of~$\principalClusterAlgebra$ with $\cluster \ssm \{x\} = \cluster' \ssm \{x'\}$.
By Remark~\ref{rem:differencePoints}, we have
\[
\point{\B_\circ}{\seed'}{\F} - \point{\B_\circ}{\seed}{\F} = -\gamma \, \cvector{\B_\circ^\vee}{\seed^\vee}{x^\vee}
\]
for some positive~$\gamma \in \R_{>0}$.
Therefore,
\[
\bigdotprod{-\sum_{x \in \cluster_\circ} \fundamentalWeight_x}{\bigpoint{\B_\circ}{\seed'}{\F} - \bigpoint{\B_\circ}{\seed}{\F}} = \gamma \, \bigdotprod{\sum_{x \in \cluster_\circ}\fundamentalWeight_x}{\cvector{\B_\circ^\vee}{\seed^\vee}{x^\vee}}
\]
is positive if and only if~$\cvector{\B_\circ^\vee}{\seed^\vee}{x^\vee}$ is a positive $\b{c}$-vector.
\end{proof}


\subsection{Universal associahedron}
\label{subsec:universalAssociahedron}

For each initial exchange matrix~$\B_\circ$ of a given type, we constructed in Section~\ref{sec:polytopality} a generalized associahedron~$\Asso{\F}$ by lifting the $\b{g}$-vector fan using an exchange submodular function~$\F$ on the cluster variables of~$\principalClusterAlgebra$.
As already observed though, the function~$\F$ is independent of the coefficients of~$\principalClusterAlgebra$, so that all $\b{g}$-vector fans can be lifted with the same function~$\F$.
This motivates the definition of a universal associahedron.

For this, consider the finite type cluster algebra~$\universalClusterAlgebra$ with universal coefficients, and let~$\clusterVariables$ denote its set of cluster variables.
Consider a $|\clusterVariables|$-dimensional vector space~$U$ with basis~$\{\beta_x\}_{x \in \clusterVariables}$ and its dual space~$U^\vee$ with basis~$\{\beta^\vee_{x^\vee}\}_{x^\vee \in \clusterVariables[\B_\circ^\vee]}$. As before, the cluster variables of~$\universalClusterAlgebra$ and~$\universalClusterAlgebra[\B_\circ^\vee]$ are related by~$x \leftrightarrow x^\vee$.
For~$\cluster \subseteq \clusterVariables$, we denote by~$\coordSubspace{\cluster}$ the coordinate subspace of~$U$ spanned by~$\{\beta_x\}_{x \in \cluster}$.

Given a seed~$\seed$ in~$\universalClusterAlgebra$, the \defn{$\b{u}$-vector} of a cluster variable $x \in \seed$ is the vector
\[
\uvector{\B_\circ}{\seed}{x} \eqdef \sum_{y \in \clusterVariables} u_{yx} \, \beta_y \; \in U
\]
of exponents of~$p_x = \prod_{y \in \clusterVariables} (p[y])^{u_{yx}}$.
Remark~\ref{rem:universalToPrincipal} then reformulates geometrically in terms of $\b{u}$- and $\b{c}$-vectors as follows.
Choose a seed~$\seed_\star = (\B_\star, \coefficients_\star, \cluster_\star)$ in~$\universalClusterAlgebra$ that you want to make initial.
Then, for any cluster variable~$x$ in a seed~$\seed$, the $\b{c}$-vector~$\cvector{\B_\star}{\seed}{x}$ is the orthogonal projection of the $\b{u}$-vector~$\uvector{\B_\circ}{\seed}{x}$ on the coordinate subspace~$\coordSubspace{\cluster_\star}$.
(Here and elsewhere we identify $\coordSubspace{\cluster_\star}$ with $V$ and $\coordSubspace{\cluster_\star^\vee}$ with $V^\vee$ in the obvious way.)

We are now ready to define the universal associahedron.

\begin{definition}
\label{def:universalAssociahedron}
For any finite type exchange matrix~$\B_\circ$ and any exchange submodular function~$\F$, the \defn{universal $\B_\circ$-associahedron} is the polytope~$\UnivAsso{\F}$ in~$U^\vee$ defined as the convex hull of the points
\[
\Univbigpoint{\B_\circ}{\seed}{\F} \eqdef \sum_{x \in \seed} \F(x) \, \biguvector{\B_\circ^\vee}{\seed^\vee}{x^\vee} \; \in U^\vee
\]
for each seed~$\seed$ of~$\universalClusterAlgebra$.
\end{definition}

Note that~$\UnivAsso{\F}$ does not depend on~$\B_\circ$ but only on its cluster type.
We keep~$\B_\circ$ in the notation since it fixes the indexing of the spaces~$U$ and~$U^\vee$.

\begin{example}
We illustrate Definition~\ref{def:universalAssociahedron} on the type~$C_2$ exchange matrix:
\[
\B_\circ = \begin{bmatrix} 0 & 2  \\ -1 & 0  \end{bmatrix}.
\]
The cluster algebra $\universalClusterAlgebra$ has $6$ cluster variables that we denote by $\clusterVariables = \{ x_1,x_2,x_3,x_4,x_5,x_6\}$.
It is straightforward to verify that to the point $2\rho^\vee$ corresponds the function~$\F_{2\rho^\vee}$ with value~$3$ on~$x_1, x_3, x_5$ and~$4$ on~$x_2, x_4, x_6$.
The $\b{u}$-vectors we need to compute our polytope are those associated to the algebra $\universalClusterAlgebra[\B_\circ^\vee]$ that appears in Example~\ref{exm:B2}.
(Notationally one should think of cluster variables in Example~\ref{exm:B2} as $\{x_i^\vee\}$.)
Using \fref{fig:B2_example} we then get that $\UnivAsso{\F_{2\rho^\vee}\!\!}$ is the convex hull of the $6$ points
\begin{gather*}
(3, 4, -3, -4, -1, 2), \qquad (3, -4, -3, -2, 1, 4), \qquad (1, 4, 3, -4, -3, -2), \\
(-3, -4, -1, 2, 3, 4), \qquad (-1, 2, 3, 4, -3, -4), \qquad (-3, -2, 1, 4, 3, -4).
\end{gather*}
These are in general position so $\UnivAsso{\F_{2\rho^\vee}\!\!}$ is a $5$-dimensional simplex embedded in~$\R^6$.
\end{example}

Our interest in~$\UnivAsso{\F}$ comes from the following property.

\begin{theorem}
\label{thm:universalAssociahedron}
Fix a finite type exchange matrix~$\B_\circ$ and an exchange submodular function~$\F$.
For any seed~$(\B_\star, \coefficients_\star, \cluster_\star)$ of $\universalClusterAlgebra$, the orthogonal projection of the universal associahedron~$\UnivAsso{\F}$ on the coordinate subspace~$\coordSubspace{\cluster_\star^\vee}$ of~$U^\vee$ spanned by~$\{\beta^\vee_{x^\vee}\}_{x^\vee \in \cluster^\vee_\star}$ is the $\B_\star$-associahedron~$\Asso[\B_\star]{\F}$.
\end{theorem}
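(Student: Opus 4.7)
The plan is to exploit linearity of the orthogonal projection together with the geometric reformulation of Remark~\ref{rem:universalToPrincipal} recalled just before the statement. Since the orthogonal projection $\pi : U^\vee \to \coordSubspace{\cluster_\star^\vee}$ is linear and convex hulls commute with linear maps, we immediately have
\[
\pi(\UnivAsso{\F}) = \conv \bigset{\pi\big(\Univbigpoint{\B_\circ}{\seed}{\F}\big)}{\seed \text{ seed of } \universalClusterAlgebra}.
\]
So the whole theorem reduces to checking, for each seed $\seed$ of $\universalClusterAlgebra$, the single vertex identity
\[
\pi\big(\Univbigpoint{\B_\circ}{\seed}{\F}\big) = \bigpoint{\B_\star}{\seed}{\F},
\]
where on the right $\seed$ is identified with its image under the unique coefficient specialization $\universalClusterAlgebra \to \principalClusterAlgebra[\B_\star]$. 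Combined with Theorem~\ref{thm:polytopality}(i), which describes $\Asso[\B_\star]{\F}$ as the convex hull of the points $\bigpoint{\B_\star}{\seed}{\F}$, this will finish the proof.

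The vertex identity itself is a direct linearity computation, and this is where the main (minor) subtlety lies. I would apply the observation from Remark~\ref{rem:universalToPrincipal} not to $\universalClusterAlgebra$ but to its Langlands dual $\universalClusterAlgebra[\B_\circ^\vee]$ with the seed $\seed_\star^\vee$ chosen as initial: it gives that for any cluster variable $x^\vee$ of any seed $\seed^\vee$ of $\universalClusterAlgebra[\B_\circ^\vee]$,
\[
\pi\big(\biguvector{\B_\circ^\vee}{\seed^\vee}{x^\vee}\big) = \bigcvector{\B_\star^\vee}{\seed^\vee}{x^\vee},
\]
the $\b{c}$-vector being computed in $\principalClusterAlgebra[\B_\star^\vee]$ via the specialization. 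Linearity then yields
\[
\pi\big(\Univbigpoint{\B_\circ}{\seed}{\F}\big) = \sum_{x \in \seed} \F(x) \, \pi\big(\biguvector{\B_\circ^\vee}{\seed^\vee}{x^\vee}\big) = \sum_{x \in \seed} \F(x) \, \bigcvector{\B_\star^\vee}{\seed^\vee}{x^\vee} = \bigpoint{\B_\star}{\seed}{\F},
\]
which is exactly the required identity.

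The part that deserves care — and is the closest thing to an obstacle — is the bookkeeping of the various bijections: seeds of $\universalClusterAlgebra$, $\universalClusterAlgebra[\B_\circ^\vee]$, $\principalClusterAlgebra[\B_\star]$, and $\principalClusterAlgebra[\B_\star^\vee]$ all share the same cluster complex (by the coefficient-free nature of the cluster complex and the Langlands duality discussed in Section~\ref{sec:prerequisitesCA}), and $\F$ is by construction constant on cluster variables matched by coefficient specialization. One needs to verify that the sums over $x \in \seed$ on the two sides of the vertex identity are indexed by matching cluster variables, and that the $\F$-values really coincide; both are direct from the definitions. Once this identification is in place, the proof of Theorem~\ref{thm:universalAssociahedron} is just the linearity calculation above, and Theorem~\ref{thm:universalAssociahedronIntro} follows at once by choosing an arbitrary exchange matrix $\B_\star$ of type $\Gamma$.
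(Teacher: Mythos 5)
Your proposal is correct and follows essentially the same route as the paper: both proofs rest on the observation (the geometric reformulation of Remark~\ref{rem:universalToPrincipal}) that $\pi_{\cluster_\star^\vee}$ sends each $\b{u}$-vector~$\uvector{\B_\circ^\vee}{\seed^\vee}{x^\vee}$ to the corresponding $\b{c}$-vector~$\cvector{\B_\star^\vee}{\seed^\vee}{x^\vee}$, deduce by linearity the vertex identity $\pi_{\cluster_\star^\vee}\big(\Univpoint{\B_\circ}{\seed}{\F}\big) = \point{\B_\star}{\seed}{\F}$, and conclude using the fact that linear maps commute with convex hulls together with the vertex description of~$\Asso[\B_\star]{\F}$ from Theorem~\ref{thm:polytopality}. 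The only difference is presentational (you push the projection through the convex hull first and then match vertices, while the paper matches vertices first), which does not change the argument.
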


\begin{proof}
Denote by~$\pi_{\cluster_\star^\vee}$ the orthogonal projection on~$\coordSubspace{\cluster_\star^\vee}$.
We already observed that
\[
\cvector{\B_\star^\vee}{\seed^\vee}{x^\vee} = \pi_{\cluster_\star^\vee} \big( \uvector{\B_\circ^\vee}{\seed^\vee}{x^\vee} \big)
\]
for any cluster variable~$x$ in any seed~$\seed$.
It follows that
\[
\bigpoint{\B_\star}{\seed}{\F} = \pi_{\cluster_\star^\vee} \big( \Univbigpoint{\B_\circ}{\seed}{\F} \big)
\]
for any seed~$\seed$.
We conclude that
\begin{align*}
\Asso{\F}
& \eqdef \conv \set{\bigpoint{\B_\star}{\seed}{\F}}{\seed \text{ seed in } \principalClusterAlgebra} \\
& = \conv \set{\pi_{\cluster_\star^\vee} \big( \Univbigpoint{\B_\star}{\seed}{\F} \big)}{\seed \text{ seed in } \universalClusterAlgebra} \\
& = \pi_{\cluster_\star^\vee} \big( \conv \set{\Univbigpoint{\B_\star}{\seed}{\F}}{\seed \text{ seed in } \universalClusterAlgebra} \big)
\defeq \pi_{\cluster_\star^\vee} \big( \UnivAsso{\F} \big).
\qedhere
\end{align*}
\end{proof}

\begin{remark}
Consider the normal fan~$\Fan$ of the universal $\B_\circ$-associahedron~$\UnivAsso{\F}$.
Then for any seed~$\seed_\star = (\B_\star, \coefficients_\star, \cluster_\star)$ in~$\universalClusterAlgebra$, the section of~$\Fan$ by the coordinate subspace~$\coordSubspace{\cluster_\star}$ of~$U$ spanned by~$\{\beta_x\}_{x \in \cluster_\star}$ is the $\b{g}$-vector fan~$\gvectorFan[\B_\star]$.
We therefore call \defn{universal $\b{g}$-vector fan} the normal fan~$\UnivgvectorFan$ of the universal $\B_\circ$-associahedron~$\UnivAsso{}$.
\end{remark}

We now gather some observations on the universal $\B_\circ$-associahedron~$\UnivAsso{\F}$:
\begin{itemize}
\item The universal associahedron~$\UnivAsso{\F}$ is \apriori{} defined in~$U^\vee$. However, computer experiments indicate that it has codimension~$1$. The linear space containing~$\UnivAsso{\F}$ seems to be expressed naturally in terms of the Cartan matrix of type~$\B_\circ$ and~$\F$.
\item As an immediate consequence of Theorem~\ref{thm:universalAssociahedron}, we obtain that the vertices of the universal associahedron~$\UnivAsso{\F}$ are precisely the points~$\Univpoint{\B_\circ}{\seed}{\F}$ for all seeds~$\seed$ of~$\universalClusterAlgebra$, and that the mutation graph of the cluster algebra~$\universalClusterAlgebra$ is a subgraph of the graph of~$\UnivAsso{\F}$. However, this inclusion is strict in general.
\item In general~$\UnivAsso{\F}$ is neither simple nor simplicial. Table~\ref{table:universalAssociahedron} presents some statistics for the number of vertices per facet and facets per vertex in the universal associahedron of type~$A_n$ for~$n \in [4]$.
\item Computer experiments indicate that the face lattice (and thus the $f$-vector) of the universal $\B_\circ$-associahedron~$\UnivAsso{\F}$ is independent of~$\F$.

\begin{table}
	\capstart
	\begin{tabular}{c|c|c|c|c|c|c}
    	$n$
		& $\parbox{\widthof{ambient space}}{\centering dimension of \\ ambient space \\[.1cm]}$ 
		& dimension 
		& \# vertices 
		& \# facets & \# vertices / facet
		& \# facets / vertex \\
		\hline
		$1$ & $2$  & $1$  & $2$  & $2$    & $1$                   & $1$                       \\
		$2$ & $5$  & $4$  & $5$  & $5$    & $4$                   & $4$                       \\
		$3$ & $9$  & $8$  & $14$ & $60$   & $9 \le \cdot \le 10$  & $30 \le \cdot \le 42$     \\
		$4$ & $14$ & $13$ & $42$ & $8960$ & $14 \le \cdot \le 28$ & $3463 \le \cdot \le 4244$
	\end{tabular}
	\caption{Some statistics for the universal associahedron of type~$A_n$ for~$n \in [4]$.}
	\label{table:universalAssociahedron}
\end{table}
\end{itemize}

To conclude, let us insist on the fact that Theorem~\ref{thm:universalAssociahedron} describes the projection of the universal associahedron~$\UnivAsso{\F}$ on coordinate subspaces corresponding to clusters of~$\universalClusterAlgebra$. It turns out that the projections on coordinate subspaces corresponding to all faces (not necessarily facets) of the cluster complex create relevant simplicial complexes, fans and polytopes~\cite{Chapoton-quadrangulations, GarverMcConville-trees, MannevillePilaud-accordion}. This naturally raises the question to understand all coordinate projections of the universal associahedron~$\UnivAsso{\F}$.


\subsection{Vertex barycenter}
\label{subsec:barycenter}

As mentioned in Section~\ref{subsec:acyclic}, the vertex barycenters of all associahedra constructed by C.~Hohlweg, C.~Lange and H.~Thomas in~\cite{HohlwegLangeThomas} coincide with the origin.
This intriguing property observed in~\cite[Conj.~5.1]{HohlwegLangeThomas} was proved by V.~Pilaud and C.~Stump~\cite{PilaudStump-barycenter}.
We show in this section that it also extends to all associahedra~$\Asso{\F}$ for any initial exchange matrix~$\B_\circ$ and any exchange submodular function~$\F$.
In fact, it is a consequence of the following stronger statement.

\begin{theorem}
\label{thm:barycenter}
For any finite type exchange matrix~$\B_\circ$ and any exchange submodular function~$\F$, the origin is the vertex barycenter of the universal $\B_\circ$-associahedron~$\UnivAsso{\F}$.
\end{theorem}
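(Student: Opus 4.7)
The plan is to reduce the assertion to the already-known acyclic case by combining the projection property from Theorem~\ref{thm:universalAssociahedron} with the barycenter theorem of Pilaud and Stump~\cite{PilaudStump-barycenter} for Hohlweg-Lange-Thomas associahedra.

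First I would unpack what projection does to the vertex barycenter. For any seed $\seed_\star = (\B_\star, \coefficients_\star, \cluster_\star)$ of $\universalClusterAlgebra$, Theorem~\ref{thm:universalAssociahedron} gives an orthogonal projection $\pi_{\cluster_\star^\vee} \colon U^\vee \to \coordSubspace{\cluster_\star^\vee}$ sending $\UnivAsso{\F}$ onto $\Asso[\B_\star]{\F}$; its proof makes clear that each vertex $\Univpoint{\B_\circ}{\seed}{\F}$ projects precisely to the vertex $\point{\B_\star}{\seed}{\F}$ indexed by the seed of $\principalClusterAlgebra[\B_\star]$ corresponding to $\seed$ under the cluster complex isomorphism. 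So the map is a bijection on vertex sets, and therefore $\pi_{\cluster_\star^\vee}$ sends the vertex barycenter of $\UnivAsso{\F}$ onto the vertex barycenter of $\Asso[\B_\star]{\F}$.

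Next I would specialise to acyclic $\seed_\star$, taking $\F = \F_{\lambda^\vee}$ arising from a fairly balanced point $\lambda^\vee$ as in Proposition~\ref{prop:existenceExchangeSubmodularFunctions}. The associahedron $\Asso[\B_\star]{\F}$ then coincides with the Hohlweg-Lange-Thomas $\B_\star$-associahedron as explained in Section~\ref{subsec:acyclic}, and its vertex barycenter is the origin by~\cite{PilaudStump-barycenter}. Consequently every coordinate of the vertex barycenter of $\UnivAsso{\F}$ indexed by $\beta^\vee_{x^\vee}$ for $x \in \cluster_\star$ vanishes. Finally, fixing an initial bipartite (hence acyclic) seed $\seed_\circ^{\mathrm{bip}}$, the clusters of the bipartite belt $\{\tau_+^k(\cluster_\circ^{\mathrm{bip}})\}_k \cup \{\tau_-^k(\cluster_\circ^{\mathrm{bip}})\}_k$ of~\cite{FominZelevinsky-YSystems, FominZelevinsky-ClusterAlgebrasII} are again bipartite, and in finite type they jointly exhaust $\clusterVariables$. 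Letting $\seed_\star$ range over this belt, every basis direction $\beta^\vee_{y^\vee}$ of $U^\vee$ lies in some $\coordSubspace{\cluster_\star^\vee}$, so every coordinate of the vertex barycenter of $\UnivAsso{\F}$ is zero, which forces the barycenter itself to the origin.

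The hard part will be handling the identifications across the three cluster algebras $\universalClusterAlgebra$, $\principalClusterAlgebra[\B_\star]$, and $\principalClusterAlgebra[\B_\circ^{\mathrm{bip}}]$: one must verify that $\F$, viewed on $\principalClusterAlgebra[\B_\star]$ via the cluster complex isomorphism, is exactly the Pilaud-Stump function attached to the same fairly balanced $\lambda^\vee$, and that each cluster variable of $\universalClusterAlgebra$ lies in at least one bipartite-belt cluster, uniformly in the finite Dynkin type. Once these bookkeeping items are in place, the argument is a direct consequence of Theorem~\ref{thm:universalAssociahedron} and~\cite{PilaudStump-barycenter}, and it automatically propagates the barycenter property down to every $\Asso{\F}$, cyclic or not, by projection.
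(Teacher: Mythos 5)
Your proposal is correct and follows essentially the same route as the paper: the projection-of-barycenters observation is precisely the paper's Lemma~\ref{lem:projectionBarycenter}, and the reduction to acyclic (bipartite) seeds combined with the Pilaud--Stump barycenter theorem is exactly the paper's proof of Theorem~\ref{thm:barycenter}. Your bipartite-belt covering argument is just a concrete instantiation of the paper's choice, for each cluster variable of~$\universalClusterAlgebra$, of an acyclic (even bipartite) seed containing it.
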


Our proof of this theorem relies on its validity in the acyclic case~\cite{PilaudStump-barycenter}, and on the following observation.

\begin{lemma}
\label{lem:projectionBarycenter}
Fix a finite type exchange matrix~$\B_\circ$ and an exchange submodular function~$\F$.
For any seed~$(\B_\star, \coefficients_\star, \cluster_\star)$ of $\universalClusterAlgebra$, the vertex barycenter of the $\B_\star$-associahedron~$\Asso[\B_\star]{\F}$ is the image of the vertex barycenter of the universal associahedron~$\UnivAsso{\F}$ by the orthogonal projection on the coordinate subspace~$\coordSubspace{\cluster_\star^\vee}$ of~$U^\vee$ spanned by~$\{\beta^\vee_{x^\vee}\}_{x^\vee \in \cluster^\vee_\star}$.
\end{lemma}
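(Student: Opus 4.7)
The plan is to deduce the statement directly from Theorem~\ref{thm:universalAssociahedron} together with the description of the vertices of~$\UnivAsso{\F}$ and~$\Asso[\B_\star]{\F}$. The key point is that orthogonal projection is linear, hence compatible with barycenters, \emph{provided} that the projection induces a bijection between the vertex sets. So the whole proof reduces to establishing this bijection on vertices.

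First I would recall that, by Theorem~\ref{thm:polytopality}, the vertices of~$\Asso[\B_\star]{\F}$ are exactly the points~$\point{\B_\star}{\seed}{\F}$ indexed by the seeds~$\seed$ of~$\principalClusterAlgebra[\B_\star]$. Analogously, as noted in the bullet points following Theorem~\ref{thm:universalAssociahedron}, the vertices of~$\UnivAsso{\F}$ are precisely the points~$\Univpoint{\B_\circ}{\seed}{\F}$ indexed by the seeds~$\seed$ of~$\universalClusterAlgebra$. Since the cluster complex depends only on the cluster type and not on the choice of coefficients, the seeds of~$\universalClusterAlgebra$ and those of~$\principalClusterAlgebra[\B_\star]$ are in canonical bijection~$\seed \leftrightarrow \seed_\star$, and this bijection identifies the two labelings of the vertex sets.

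Next I would invoke the computation carried out in the proof of Theorem~\ref{thm:universalAssociahedron}, which shows that
\[
\pi_{\cluster_\star^\vee}\bigl(\Univbigpoint{\B_\circ}{\seed}{\F}\bigr) = \bigpoint{\B_\star}{\seed}{\F}
\]
for every seed~$\seed$. Combined with the previous step, this says that the restriction of~$\pi_{\cluster_\star^\vee}$ to the vertex set of~$\UnivAsso{\F}$ is a bijection onto the vertex set of~$\Asso[\B_\star]{\F}$: the map is surjective by Theorem~\ref{thm:universalAssociahedron} and injective because the points~$\point{\B_\star}{\seed}{\F}$ are pairwise distinct (each is a distinct vertex of~$\Asso[\B_\star]{\F}$).

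Finally, letting~$N$ denote the common cardinality of these two vertex sets and using the linearity of~$\pi_{\cluster_\star^\vee}$, I would conclude
\[
\frac{1}{N} \sum_{\seed} \bigpoint{\B_\star}{\seed}{\F}
= \frac{1}{N} \sum_{\seed} \pi_{\cluster_\star^\vee}\bigl(\Univbigpoint{\B_\circ}{\seed}{\F}\bigr)
= \pi_{\cluster_\star^\vee}\!\left(\frac{1}{N} \sum_{\seed} \Univbigpoint{\B_\circ}{\seed}{\F}\right),
\]
which is precisely the claim. The only delicate point to verify carefully is the bijectivity of~$\pi_{\cluster_\star^\vee}$ on vertices: a priori the projection of a polytope can collapse several vertices onto one, but here the identification of vertices with seeds on both sides rules this out, and this is the step I would treat as the main obstacle.
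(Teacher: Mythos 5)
Your proposal is correct and follows essentially the same route as the paper: it combines the identity $\pi_{\cluster_\star^\vee}\bigl(\Univbigpoint{\B_\circ}{\seed}{\F}\bigr) = \bigpoint{\B_\star}{\seed}{\F}$ from the proof of Theorem~\ref{thm:universalAssociahedron} with the linearity of the projection and the fact that seeds index the vertices of both polytopes. The paper compresses into one sentence the seed--vertex bijection that you spell out as the ``delicate point,'' but the underlying argument is identical.
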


\begin{proof}
Denote by~$\pi_{\cluster_\star^\vee}$ the orthogonal projection on~$\coordSubspace{\cluster_\star^\vee}$.
Since~$\pi_{\cluster_\star^\vee}$ is linear, we have
\[
\sum_{\seed} \bigpoint{\B_\star}{\seed}{\F} = \sum_{\seed} \pi_{\cluster_\star^\vee} \big( \Univbigpoint{\B_\circ}{\seed}{\F} \big) = \pi_{\cluster_\star^\vee} \Big( \sum_{\seed}\Univbigpoint{\B_\circ}{\seed}{\F} \Big),
\]
where~$\seed$ runs over all seeds of~$\universalClusterAlgebra$.
The result follows since these seeds index the vertices of both~$\Asso[\B_\star]{\F}$ and~$\UnivAsso{\F}$.
\end{proof}

\begin{proof}[Proof of Theorem~\ref{thm:barycenter}]
Consider a cluster variable~$x$ of~$\universalClusterAlgebra$.
Let~$(\B_\star, \coefficients_\star, \cluster_\star)$ be an acyclic seed of~$\universalClusterAlgebra$ containing~$x$ (such a seed exists, we could even require that it is bipartite).
Since by ~\cite{PilaudStump-barycenter} the vertex barycenter of $\Asso[\B_\star]{\F}$ is at the origin, we obtain by Lemma~\ref{lem:projectionBarycenter} that the $x$-coordinate of the vertex barycenter of the universal associahedron~$\UnivAsso{\F}$ vanishes.
Applying the same argument independently for all cluster variables~$x$ of~$\universalClusterAlgebra$ concludes the proof.
\end{proof}

\begin{corollary}
For any finite type exchange matrix~$\B_\circ$ and any exchange submodular function~$\F$, the origin is the vertex barycenter of the $\B_\circ$-associahedron~$\Asso{\F}$.
\end{corollary}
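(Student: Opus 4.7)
The plan is to deduce this corollary as a one-line consequence of Theorem~\ref{thm:barycenter} (which has just been proved) together with Lemma~\ref{lem:projectionBarycenter}. All the nontrivial work has been done: Theorem~\ref{thm:barycenter} establishes that the origin of~$U^\vee$ is the vertex barycenter of the universal $\B_\circ$-associahedron~$\UnivAsso{\F}$, and Theorem~\ref{thm:universalAssociahedron} together with Lemma~\ref{lem:projectionBarycenter} tell us how to pass from~$\UnivAsso{\F}$ to the individual associahedra~$\Asso[\B_\star]{\F}$ by orthogonal projection on the coordinate subspace indexed by any cluster of~$\universalClusterAlgebra$.

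Concretely, I would argue as follows. By construction, the initial exchange matrix~$\B_\circ$ is the exchange matrix of the initial seed~$(\B_\circ, \coefficients_\circ, \cluster_\circ)$ of~$\universalClusterAlgebra$, so $\B_\circ$ qualifies as an admissible choice of~$\B_\star$ in Lemma~\ref{lem:projectionBarycenter}. Let~$\pi_{\cluster_\circ^\vee} \colon U^\vee \to \coordSubspace{\cluster_\circ^\vee}$ denote the orthogonal projection on the coordinate subspace indexed by~$\cluster_\circ^\vee$, and recall that $\coordSubspace{\cluster_\circ^\vee}$ is identified with the ambient space~$V^\vee$ of the $\B_\circ$-associahedron under the convention adopted in Section~\ref{subsec:universalAssociahedron}. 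Applying Lemma~\ref{lem:projectionBarycenter} with~$\B_\star = \B_\circ$ yields
\[
\sum_{\seed} \bigpoint{\B_\circ}{\seed}{\F} = \pi_{\cluster_\circ^\vee} \Big( \sum_{\seed} \Univbigpoint{\B_\circ}{\seed}{\F} \Big),
\]
where~$\seed$ runs over all seeds of~$\universalClusterAlgebra$ (equivalently, over all seeds of~$\principalClusterAlgebra$ via the bijection of cluster complexes). By Theorem~\ref{thm:barycenter}, the right-hand sum vanishes in~$U^\vee$, and since any linear map sends the origin to the origin, its image under~$\pi_{\cluster_\circ^\vee}$ is zero in~$V^\vee$. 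Dividing by the number of vertices then gives that the vertex barycenter of~$\Asso{\F}$ is the origin.

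There is essentially no obstacle here, since Theorem~\ref{thm:barycenter} and Lemma~\ref{lem:projectionBarycenter} carry the entire load. The only point worth double-checking is the bookkeeping identification of~$\coordSubspace{\cluster_\circ^\vee}$ with~$V^\vee$ so that the equality of barycenters takes place in the correct ambient space; this is, however, exactly the identification that was used to prove Theorem~\ref{thm:universalAssociahedron}, and it requires no further verification.
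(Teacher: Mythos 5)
Your proposal is correct and matches the paper's own argument, which deduces the corollary directly from Theorem~\ref{thm:barycenter} and Lemma~\ref{lem:projectionBarycenter} (taking the seed $\seed_\star$ with exchange matrix~$\B_\circ$ itself). The extra care you take with the identification of~$\coordSubspace{\cluster_\circ^\vee}$ with~$V^\vee$ is sound but adds nothing beyond what the paper treats as immediate.
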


\begin{proof}
This is an immediate consequence of Theorem~\ref{thm:barycenter} and Lemma~\ref{lem:projectionBarycenter}.
\end{proof}


\subsection{Zonotope}
\label{subsec:zonotope}

Motivated by the specific families presented in Section~\ref{sec:specificFamilies}, it is natural to investigate whether there exists a zonotope~$\Zono{\F}$ whose facet description contains all inequalities of the associahedron~$\Asso{\F}$.
In this section, we show that such a zonotope does not always exist in general.

\begin{figure}[b]
	\capstart
	\centerline{\includegraphics[scale=.7]{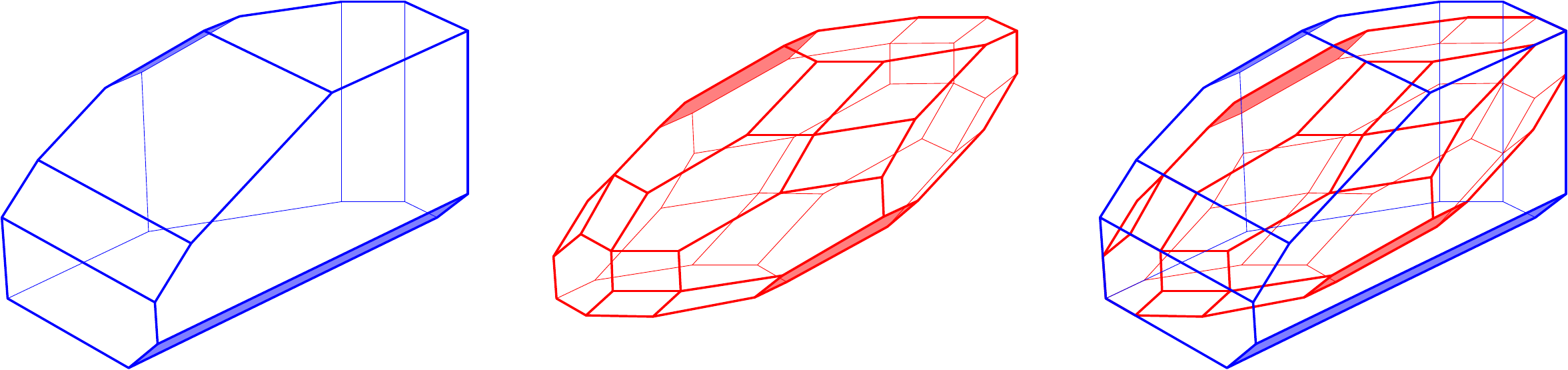}}
	\caption{The zonotope counter-example in type~$C_3$. The $\B_\circ$-associahedron~$\Asso{}$ (left), the $\B_\circ$-zonotope~$\Zono{}$ (middle), and their superposition (right) for the type~$C_3$ cyclic initial exchange matrix~$\B_\circ$. The hyperplanes supporting the shaded facets of~$\Asso{}$ are parallel to but do not coincide with the hyperplanes supporting the shaded facets of~$\Zono{}$.}
	\label{fig:C3_zono_asso_1}
\end{figure}

\para{A first naive option.}
Mimicking Section~\ref{sec:specificFamilies}, the natural choice is to consider the zonotope
\(
\Zono{} \eqdef \sum_{\b{c} \in \allcvectors{\B_\circ^\vee}} \b{c}.
\)
Indeed, we have seen in Section~\ref{sec:specificFamilies} that all inequalities of~$\Asso{}$ are inequalities of~$\Zono{}$ when~$\B_\circ$ is either acyclic or of type~$A$.

However, this property already fails for the type~$C_3$ cyclic initial exchange matrix
\[
\B_\circ = \begin{bmatrix} 0 & -1 & 2 \\ 1 & 0 & -2 \\ -1 & 1 & 0 \end{bmatrix}.
\]
One indeed checks that in the direction of the $\b{g}$-vectors~$(-1,1,0)$ and~$(1,-1,0)$, the right hand sides of the inequalities are~$4$ in~$\Asso{}$ and~$3$ in~$\Zono{}$.
This is visible in \fref{fig:C3_zono_asso_1} where the facets of~$\Asso{}$ and~$\Zono{}$ orthogonal to the $\b{g}$-vectors~$(-1,1,0)$ and~$(1,-1,0)$ are shaded.

\fref{fig:C3_zono_asso_2}\,(left) shows that the polytope defined by the inequalities of~$\Zono{}$ defining facets whose normal vectors are $\b{g}$-vectors of~$\principalClusterAlgebra$ is not an associahedron of type~$C_3$ (it is not even a simple polytope).

Note by the way that the two $\b{g}$-vectors~$(-1,1,0)$ and~$(1,-1,0)$ are opposite (thus correspond to parallel facets of~$\Asso{}$).
This should sound unusual as the only pairs of opposite $\b{g}$-vectors in both situations of Section~\ref{sec:specificFamilies} are the pairs of opposite coordinate vectors~$\{\fundamentalWeight_x, -\fundamentalWeight_x\}$ for~$x \in \cluster_\circ$.

\para{General approach.}
For any tuple~$\gamma \eqdef (\gamma_{\b{c}})_{\b{c} \in \allcvectors{\B_\circ^\vee}}$ of positive coefficients, we consider the zonotope
\[
\Zono{\gamma} \eqdef \sum_{\b{c} \in \allcvectors{\B_\circ^\vee}} \gamma_{\b{c}} \, \b{c}
\]
By definition, its normal fan is $\cvectorFan$.
For any ray~$\b{g}$ of this fan, the inequality defining the facet normal to~$\b{g}$ is given by~$\dotprod{\b{g}}{\b{v}} \le \rhs{\b{g}}$ where
\[
\rhs{\b{g}} \eqdef \sum_{\substack{\b{c} \in \allcvectors{\B_\circ^\vee} \\ \dotprod{\b{g}}{\b{c}} > 0}} \gamma_{\b{c}} \, \dotprod{\b{g}}{\b{c}}.
\]
The facet description of the zonotope~$\Zono{\gamma}$ thus contains all inequalities of the associahedron~$\Asso{\F}$ if and only if~$\gamma = \gamma(\F)$ is a positive solution to the system of linear equations
\[
\sum_{\substack{\b{c} \in \allcvectors{\B_\circ^\vee} \\ \dotprod{\gvector{\B_\circ}{x}}{\b{c}} > 0}} \gamma_{\b{c}} \, \dotprod{\biggvector{\B_\circ}{x}}{\b{c}} = \F(x)
\qquad\qquad\text{for all cluster variable~$x$ of~$\principalClusterAlgebra$.}
\]

For example, such a solution exists for the type~$C_3$ cyclic initial exchange matrix, as illustrated in \fref{fig:C3_zono_asso_2}\,(right).
Note that we had to pick different coefficients for different elements of $\allcvectors{\B_\circ^\vee}$ (leading to unnaturally narrow faces) since we already observed in \fref{fig:C3_zono_asso_1} that constant coefficients are not suitable.

\begin{figure}
	\capstart
	\centerline{\includegraphics[scale=.7]{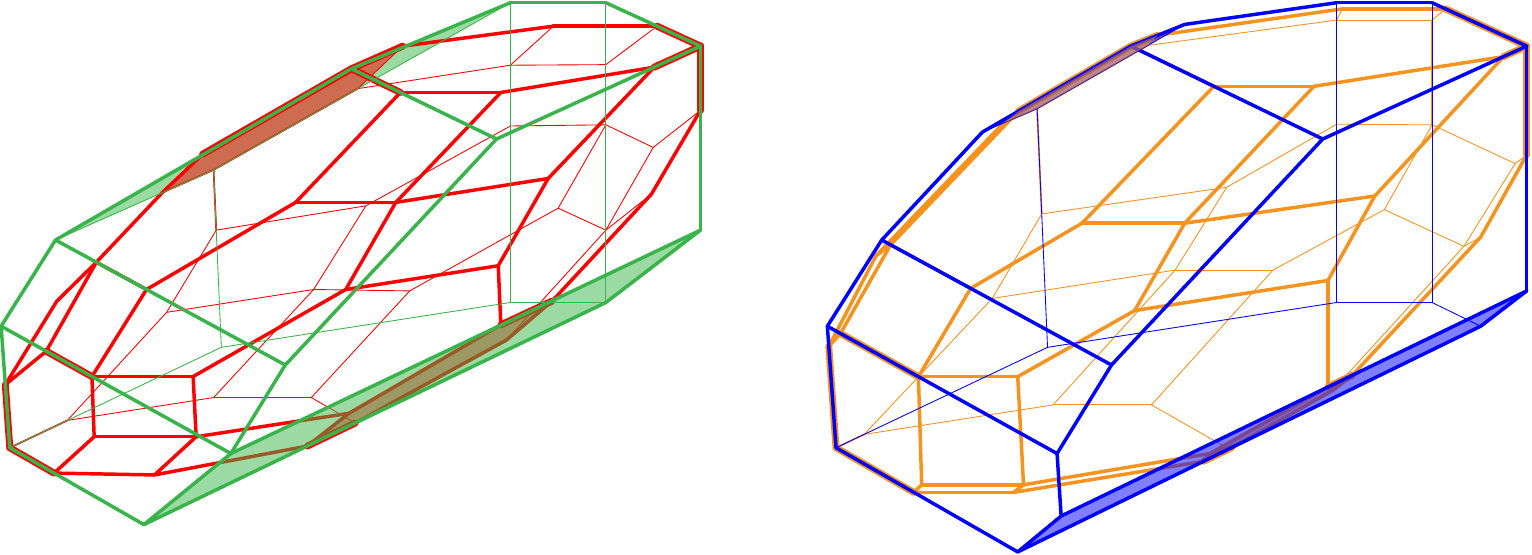}}
	\caption{The polytope defined by the inequalities of~$\Zono{}$ defining facets whose normal vectors are $\b{g}$-vectors of~$\principalClusterAlgebra$ (left), and a zonotope whose facet description contains all inequalities of~$\Asso{}$ (right).}
	\label{fig:C3_zono_asso_2}
\end{figure}

In contrast, a quick computer experiment shows that this system has no solution for the type~$D_5$ cyclic initial exchange matrix
\[
\B_\circ = \begin{bmatrix} 0 & 1 & 0 & 0 & -1 \\ -1 & 0 & 1 & 0 & 0 \\ 0 & -1 & 0 & 1 & 0 \\ 0 & 0 & -1 & 0 & 1 \\ 1 & 0 & 0 & -1 & 0 \end{bmatrix}
\]
and the balanced exchange submodular function~$\F_{\rho^\vee}$.
Rather than showing the detailed linear system, we prefer to convince the reader that there is no solution under the natural assumption that~$\gamma_{\b{c}} = \gamma_{-\b{c}}$ for any $\b{c}\in \allcvectors{\B_\circ^\vee}$, \ie that the zonotope~$\Zono{\gamma}$ is centrally symmetric.
To see that no such solution exists, consider the classical punctured pentagon model for the type~$D_5$ cluster algebra~\cite{FominShapiroThurston}.
Recall that 
\begin{enumerate}[(i)]
\item cluster variables correspond to internal arcs up to isotopy (with the subtlety that an arc incident to the puncture can be tagged or not), 
\item $\b{g}$-vectors can be read by shear coordinates~\cite{FominThurston} in a similar way as in Section~\ref{subsec:typeA}, and
\item the compatibility degree between two arcs is the (minimal) number of crossings between~them.
\end{enumerate}
We therefore obtain that the arcs~$\gamma_\ell$ connecting~$1_\bullet$ to~$3_\bullet$ in \fref{fig:exmTriangulationsD5}\,(left) and the arc~$\gamma_r$ connecting~$4_\bullet$ to~$5_\bullet$ in \fref{fig:exmTriangulationsD5}\,(right) satisfy:
\begin{gather*}
\biggvector{\B_\circ}{x_{\gamma_\ell}} = (-1,0,1,0,0)
\quad\text{and}\quad
\F_{\rho^\vee}(x_{\gamma_\ell}) = 7, \\
\biggvector{\B_\circ}{x_{\gamma_r}} = (1,0,-1,0,0)
\quad\text{and}\quad
\F_{\rho^\vee}(x_{\gamma_\ell}) = 9.
\end{gather*}
In other words, we exhibited two cluster variables with opposite $\b{g}$-vectors while belonging to distinct $\langle\tau_+,\tau_-\rangle$-orbits.
Since a centrally symmetric zonotope would have the same right-hand-sides on opposite normal vectors, this shows that no such polytope exists.

\begin{figure}
	\centerline{\includegraphics[scale=1]{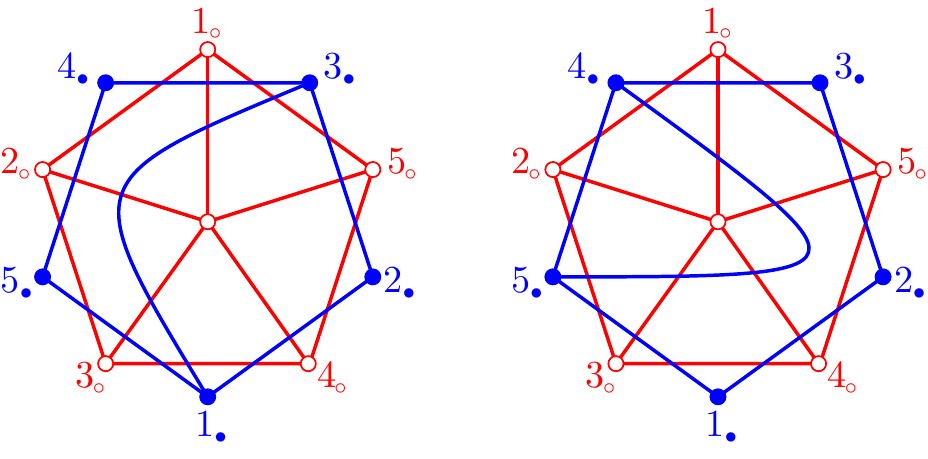}}
	\caption{An obstruction to the existence of a centrally symmetric zonotope whose facet description would contain that of the associahedron~$\Asso{}$ for a cyclic initial seed~$\B_\circ$ in type~$D_5$. The two arcs~$\gamma_\ell$ (left) and~$\gamma_r$ (right) have opposite $\b{g}$-vectors but different compatibility sums.}
	\label{fig:exmTriangulationsD5}
\end{figure}

\section*{Aknowledgements}

We thank N.~Reading for helpful discussions on this topic and comments on a preliminary version of this paper.
We are also grateful to A.~Garver and T.~McConville for bibliographic inputs.
This work answers one of the questions A. Zelevinsky posed to the third author as possible Ph.D. problems. 
He would therefore like to express once again gratitude to his advisor for the guidance received.

\bibliographystyle{alpha}
\bibliography{associahedronFromCyclicSeeds}
\label{sec:biblio}

\end{document}